\newcommand{\Ker}{\mathrm{Ker}}
\newcommand{\Res}{\mathrm{Res}}
\newcommand{\Aut}{\mathrm{Aut}}
\newcommand{\Hom}{\mathrm{Hom}}
\newcommand{\val}{\mathrm{val}}
\newcommand{\End}{\mathrm{End}}
\newcommand{\GL}{\mathrm{GL}}
\newcommand{\bb}{{\bf b}}
\newcommand{\bk}{{\bf k}}
\newcommand{\id}{\mathrm{id}}
\newcommand{\bg}{(\hspace{-0.06cm}(}
\newcommand{\jg}{)\hspace{-0.06cm})}
\newcommand{\bs}{[\hspace{-0.04cm}[}
\newcommand{\js}{]\hspace{-0.04cm}]}
\newtheorem{thm}{Theorem}[section]
\newtheorem{pro}[thm]{Proposition}
\newtheorem{lem}[thm]{Lemma}
\newtheorem{cor}[thm]{Corollary}
\newtheorem{df}[thm]{Definition}
\newtheorem{exm}[thm]{Example}
\newtheorem{rem}[thm]{Remark}
\theoremstyle{definition}
\begin{document}
\title{$(\varphi,\Gamma)$-modules over noncommutative overconvergent and Robba rings} 
\author{Gergely Z\'abr\'adi}
\date{\today}
\maketitle
\begin{abstract}
We construct noncommutative multidimensional versions of overconvergent power
series rings and Robba rings. We show that the category of \'etale
$(\varphi,\Gamma)$-modules over certain completions of these rings are equivalent to the category of
\'etale $(\varphi,\Gamma)$-modules over the corresponding classical
overconvergent, resp.\ Robba rings (hence also to the category of $p$-adic
Galois representations of $\mathbb{Q}_p$). Moreover, in the case of Robba
rings, the assumption of \'etaleness is not necessary, so there exists a
notion of trianguline objects in this sense.
\end{abstract}

\section{Introduction}

In recent years it has become increasingly clear that some kind of $p$-adic
version of the local Langlands correspondence should exist. In fact, Colmez
\cite{Co1,Co2} constructed such a correspondence for
$\GL_2(\mathbb{Q}_p)$. His construction is done in several steps using
$(\varphi,\Gamma)$-modules (the category of which is well-known \cite{F} to be
equivalent to the category of $p$-adic Galois representations of
$\mathbb{Q}_p$).  We briefly recall Colmez's correspondence here. Let $K$
be a finite extension of $\mathbb{Q}_p$ with ring of integers $o_K$ and
uniformizer $p_K$. 

The so-called ``Montreal-functor'' associates to a smooth $o_K$-torsion
representation of the standard Borel subgroup $B_2(\mathbb{Q}_p)$ of
$\GL_2(\mathbb{Q}_p)$ an $o_K$-torsion $(\varphi,\Gamma)$-module over
Fontaine's ring $\mathcal{O_E}$. If we are given a unitary Banach space
representation $\Pi$ over the field $K$ of the group $\GL_2(\mathbb{Q}_p)$
then it admits an $o_K$-lattice $L(\Pi)$ which is invariant under
$\GL_2(\mathbb{Q}_p)$. Hence $L(\Pi)/p_K^r$ is a smooth $o_K$-torsion
representation that we restrict now to $B_2(\mathbb{Q}_p)$. The
$(\varphi,\Gamma)$-module associated to $\Pi$ is the projective limit (as
$r\to\infty$) of the $(\varphi,\Gamma)$-modules associated to $L(\Pi)/p_K^r$
via the Montreal functor. This is generalized in \cite{SVi} to general
reductive groups over $\mathbb{Q}_p$.  

The reverse direction, how one adjoins a unitary continuous $p$-adic
representation to a $2$-dimensional $(\varphi,\Gamma)$-module $D$ over Fontaine's ring, is
even more subtle. One first constructs a unitary $p$-adic Banach space
representation $\Pi(D)$ to each $2$-dimensional \emph{trianguline} $(\varphi,\Gamma)$-module $D$ over $\mathcal{E}=\mathcal{O_E}[p^{-1}]$ using some
kind of parabolic induction. This Banach space is well described as a quotient
of the space of $p$-adic functions satisfying certain properties by a certain
$\GL_2(\mathbb{Q}_p)$-invariant subspace (see \cite{Co3} and \cite{Br} for
details), however, a priori it is not clear whether or not it is
nontrivial. On the other hand, there is a general construction of a (somewhat 
bigger) $\GL_2(\mathbb{Q}_p)$-representation $D\boxtimes_\delta\mathbb{P}^1$ which
is in fact the space of global sections of a $\GL_2(\mathbb{Q}_p)$-equivariant
sheaf $U\mapsto D\boxtimes_\delta U$ ($U\subseteq \mathbb{P}^1$ open) on the projective space
$\mathbb{P}^1(\mathbb{Q}_p)\cong\GL_2(\mathbb{Q}_p)/B_2(\mathbb{Q}_p)$ for any (not necessarily $2$-dimensional) $(\varphi,\Gamma)$-module $D$ and any unitary character $\delta\colon\mathbb{Q}_p^{\times}\to o_K^{\times}$. This sheaf has the following properties: $(i)$ the centre of $\GL_2(\mathbb{Q}_p)$ acts via $\delta$ on $D\boxtimes_\delta \mathbb{P}^1$; $(ii)$ we have $D\boxtimes_\delta \mathbb{Z}_p\cong D$ as a module over the monoid $\begin{pmatrix}\mathbb{Z}_p\setminus\{0\}&\mathbb{Z}_p\\0&1\end{pmatrix}$ (where we regard $\mathbb{Z}_p$ as an open subspace in $\mathbb{P}^1=\mathbb{Q}_p\cup\{\infty\}$). (See
\cite{SVZ} for a generalization of this construction to general reductive
groups.) Then Colmez shows that in case $D$ is $2$-dimensional and trianguline, then there exists a unitary character $\delta$ (namely $\delta=\chi^{-1}\det D$ where $\chi$ is the cyclotomic character and $\det D$ is the character associated to the $1$-dimensional $(\varphi,\Gamma)$-module $\bigwedge^2 D$ via Fontaine's equivalence composed with class field theory) such that a certain subspace $D^\natural\boxtimes_\delta \mathbb{P}^1$ (for the definition see \cite{Co2}) of $D\boxtimes_\delta\mathbb{P}^1$ is isomorphic to the dual of the Banach space representation $\Pi(\check{D})$ associated
earlier to the dual $(\varphi,\Gamma)$-module $\check{D}$---therefore showing in particular that the previous construction is
nonzero. This subspace makes sense also in case $D$ is not
trianguline (nor of rank $2$), but a priori only known to be
$B_2(\mathbb{Q}_p)$-invariant. Moreover, whenever $D$ is indecomposable and $2$-dimensional, then the above $\delta$ is unique \cite{P}, and whenenever $D$ is absolutely irreducible and $\geq 3$-dimensional, then there does not exist \cite{P} such a character $\delta$ (so that the subspace $D^\natural\boxtimes_\delta \mathbb{P}^1$ is $\GL_2(\mathbb{Q}_p)$-invariant). Since the construction of $D\mapsto D^{\natural}\boxtimes_\delta \mathbb{P}^1$ behaves well in families (see chapter II in \cite{Co1}) and the trianguline Galois-representations are Zariski-dense in the deformation space of $2$-dimensional $(\varphi,\Gamma)$-modules with given reduction mod $p$ \cite{Ki}, Colmez \cite{Co1} shows
that this subspace is not only $B_2(\mathbb{Q}_p)$, but also
$\GL_2(\mathbb{Q}_p)$-invariant for general $2$-dimensional $(\varphi,\Gamma)$-modules. Moreover, for $\delta=\chi^{-1}\det D$ (in this case we omit the subscript $\delta$ from the notation) we have a short exact sequence
\begin{equation*}
0\to \Pi(\check{D})\to D\boxtimes\mathbb{P}^1\to \Pi(D)\to 0
\end{equation*}
where $\Pi(D)$ is the unitary Banach-space representation associated to $D$ via the $p$-adic Langlands correspondence.

Colmez (\cite{Co1}, chapter V and VI) also identifies the space $\Pi(D)^{an}$ of locally analytic and the space $\Pi(D)^{alg}$ of locally algebraic vectors in the Banach-space representation $\Pi(D)$. These play a crucial role in the proof of the compatibility of the $p$-adic and the classical local Langlands correspondence. In fact, we have $\Pi(D)^{an}=(D^{\dagger}\boxtimes\mathbb{P}^1)/K\cdot (D^{\natural}\boxtimes\mathbb{P}^1)$ where $D^\dagger\boxtimes\mathbb{P}^1$ is the subspace of elements $x\in D\boxtimes\mathbb{P}^1$ such that both $\Res_{\mathbb{Z}_p}^{\mathbb{P}^1}(x)$ and $\Res_{\mathbb{Z}_p}^{\mathbb{P}^1}\left(\begin{pmatrix}0&1\\1&0\end{pmatrix}x\right)$ lie in the subspace of overconvergent elements $D^\dagger\subset D\cong D\boxtimes\mathbb{Z}_p$. $D^\dagger$ is an \'etale $(\varphi,\Gamma)$-module over the ring $\mathcal{E}^\dagger$ of overconvergent power series with coefficients in $K$ such that $D\cong \mathcal{E}\otimes_{\mathcal{E}^\dagger}D^\dagger$ \cite{CC}.

Let now $G$ be the group of $\mathbb{Q}_p$-points of a connected $\mathbb{Q}_p$-split reductive group and $P=TN$ a Borel subgroup of $G$. Further denote by $\Phi^+$ the set of positive roots with respect to $P$ and $\Delta\subset \Phi^+$ the set of simple roots. The above noted generalizations of Colmez's work (\cite{SVi} and \cite{SVZ}) both use a certain microlocalisation $\Lambda_\ell(N_0)$ (constructed originally in \cite{SVe}) of the Iwasawa algebra $\Lambda(N_0)$ of a compact open subgroup $N_0$ of $N$. This can be thought of as the noncommutative analogue of Fontaine's ring $\mathcal{O_E}$. On the other hand, Colmez's $p$-adic Langlands correspondence heavily relies on the theory of trianguline $(\varphi,\Gamma)$-modules. A $(\varphi,\Gamma)$-module over the Robba ring is a free module $D_{rig}^\dagger$ over $\mathcal{R}$ together with commuting semilinear actions of the operator $\varphi$ and the group $\Gamma$ such that $\varphi$ takes a basis of the free module to another basis. Such a $(\varphi,\Gamma)$-module $D_{rig}^\dagger$ is said to be \'etale (or of slope $0$) if there is a basis of $D_{rig}^\dagger$ such that the matrix of $\varphi$ in this basis is an invertible matrix over the subring $\mathcal{O}_{\mathcal{E}}^\dagger\subset\mathcal{R}$ of overconvergent Laurent series. An \'etale $(\varphi,\Gamma)$-module over $\mathcal{R}$ is \emph{trianguline} if it admits a filtration of (not necessarily \'etale) $(\varphi,\Gamma)$-modules over $\mathcal{R}$ with subquotients of rank $1$ possibly after a finite base change $E\otimes_K\cdot$. The fact that the Robba ring and the ring of overconvergent Laurent series play such a role in the construction of the $p$-adic Langlands correspondence for $\GL_2(\mathbb{Q}_p)$ and also in the identification of the locally analytic vectors is the motivation for the construction of noncommutative analogues of these rings---as they will most probably be needed for a future correspondence for reductive groups other than $\GL_2(\mathbb{Q}_p)$.

The motivation of this paper is twofold. On one hand, we reinterpret the ring $\Lambda_\ell(N_0)$ as follows. Instead of localising and completing the Iwasawa algebra $\Lambda(N_0)$, one may construct $\Lambda_\ell(N_0)$ as the projective limit of certain skew group rings over $\mathcal{O_E}$. The only assumptions on the ring $R=\mathcal{O_E}$ such that this new  construction of $\Lambda_\ell(N_0)$ can be carried out are that $R$ admits an inclusion $\chi\colon\mathbb{Z}_p\to R^{\times}$ of the additive group $\mathbb{Z}_p$ into its group of invertible elements and an \'etale action of an operator $\varphi$ that is compatible with $\chi$. The noncommutative ring that is constructed is a completed skew group ring $R\bs H_1,\ell\js$ of a closed normal subgroup $H_1$ of a pro-$p$ group $H_0$ such that $\ell\colon H_0\twoheadrightarrow \mathbb{Z}_p$ is a homomorphism with kernel $H_1$ (hence $H_0/H_1\cong\mathbb{Z}_p$). The main result in this direction is Prop.\ \ref{equivcat} showing that the category of $\varphi$-modules over $R$ is equivalent to the category of $\varphi$-modules over the completed skew group ring $R\bs H_1,\ell\js$. This can be applied  also to the ring $R=\mathcal{O}_\mathcal{E}^\dagger$ of overconvergent Laurent series with coefficients in $o_K$ and the Robba ring $\mathcal{R}$. The other motivation (probably also the more important one) is the construction of the right noncommutative analogues of $\mathcal{O}_\mathcal{E}^\dagger$ and $\mathcal{R}$. The elements of the rings $\mathcal{R}\bs H_1,\ell\js$ and
$\mathcal{O}_{\mathcal{E}}^\dagger\bs H_1,\ell\js$, however, are not necessarily
convergent in any open annulus since they are obtained by taking an inverse
limit. Therefore in section \ref{partialrobba} we construct the rings
$\mathcal{R}(H_1,\ell)$ and $\mathcal{R}^{int}(H_1,\ell)$ as
direct limits of certain microlocalisations of the distribution algebra. The
elements of these are convergent in a region of the form
\begin{equation*}
\left\{\rho_2<|b_\alpha|<1,\ |b_\beta|<|b_\alpha|^{r}\text{ for }\beta\in\Phi^+\setminus\{\alpha\}\right\}
\end{equation*}
for some $p^{-1}<\rho_2<1$ and $1\leq r\in\mathbb{Z}$. In section \ref{relate} we show that $\mathcal{R}\bs N_1,\ell\js$ (resp.\ $\mathcal{O}_\mathcal{E}^\dagger\bs N_1,\ell\js$) is a certain completion of $\mathcal{R}(N_1,\ell)$ (resp.\ $\mathcal{R}^{int}(N_1,\ell)$). Note that although the natural map $j_{int}\colon\mathcal{R}^{int}(N_1,\ell)\to\mathcal{O}_\mathcal{E}^\dagger\bs N_1,\ell\js$ is injective, the map $j\colon \mathcal{R}(N_1,\ell)\to\mathcal{R}\bs N_1,\ell\js$ is not. Both rings $\mathcal{R}(N_1,\ell)$ and its integral version admit an \'etale action of the monoid $T_+=\{t\in T\mid tN_0t^{-1}\subseteq N_0\}$. However, it is an open question whether the categories of \'etale $T_+$-modules over these rings are equivalent to the \'etale $T_+$-modules over their completions.

In my opinion, the right noncommutative analogues of the ring $\mathcal{R}$ (resp.\ $\mathcal{O}_\mathcal{E}^\dagger$) is $\mathcal{R}(N_1,\ell)$ (resp.\ $\mathcal{R}^{int}(N_1,\ell)$) in the context of $\mathbb{Q}_p$-split reductive groups $G$ over $\mathbb{Q}_p$ as both rings admit an \'etale action of the monoid $T_+$ and their elements converge in certain polyannuli. However, it might still be useful to also consider the rings $\mathcal{R}\bs H_1,\ell\js$ and
$\mathcal{O}_{\mathcal{E}}^\dagger\bs H_1,\ell\js$, as they can help us compare the category of usual $(\varphi,\Gamma)$-modules with the category of $T_+$-modules over $\mathcal{R}(N_1,\ell)$ (resp.\ over $\mathcal{R}^{int}(N_1,\ell)$) using the equivalence of categories in Proposition \ref{equivcat}. Note that only one variable is inverted in these rings in contrast to the rings constructed in \cite{SZ}. The reasons for this are the following: $(i)$ this way $\mathcal{R}^{int}(N_1,\ell)$ is a subring of $\Lambda_\ell(N_0)$; $(ii)$ the equivalence of categories in Proposition \ref{equivcat} holds for rings in which only one variable is inverted; $(iii)$ all the usual $(\varphi,\Gamma)$-modules are overconvergent, ie.\ descend to $\mathcal{O}_{\mathcal{E}}^\dagger$ already in one variable. However, if $\mathbb{Q}_p$ is replaced by a finite unramified extension $F$ then one might have to consider Lubin-Tate $(\varphi,\Gamma_F)$-modules (with $\Gamma_F\cong o_F^{\times}$) instead so that the monoid $\varphi^{\mathbb{N}}\Gamma_F$ is isomorphic to $o_F\setminus\{0\}$. These $(\varphi,\Gamma_F)$-modules are not overconvergent in general but they might still correspond to objects over certain multivariable Robba rings (in which all the variables are inverted). For a first result in this direction see \cite{Be2}. It is plausible to expect that for general reductive groups $G$ over $F$ one has to invert exactly $|F:\mathbb{Q}_p|$ ($\mathbb{Q}_p$-)variables that correspond to the root subgroup $N_\alpha\cong F\cong\mathbb{Q}_p^{|F:\mathbb{Q}_p|}$ for a given simple root $\alpha$.

\subsection{Acknowledgements}

This research was supported by the Hungarian OTKA grant K-100291. The proof of
Prop.\ \ref{equivcat} is a direct generalization of Thm.\ 8.20 in \cite{SVZ}
and grew out of related discussions with Marie-France Vigneras and Peter
Schneider. I express my gratitude to both of them for allowing me to include
this proof here and for other valueable discussions. My debt is especially due
to Peter Schneider from whom I learnt the most that I know about $p$-adic
representation theory. I would like to thank Torsten Schoeneberg for reading
the manuscript and for his comments. I also benefited from enlightening
conversations with Kiran Kedlaya. Finally, I thank the referee for a careful reading of the manuscript.

\section{Completed skew group rings}\label{phiring}

Let $R$ be a commutative ring (with identity) with the following properties: 
\begin{enumerate}[$(i)$]
\item There exists a group homomorphism $\chi\colon\mathbb{Z}_p\hookrightarrow
  R^{\times}$.
\item The ring $R$ admits an \'etale action of the $p$-Frobenius $\varphi$
  that is compatible with $\chi$. More precisely there is an injective ring
  homomorphism $\varphi\colon R\hookrightarrow R$ such that
  $\varphi(\chi(x))=\chi(px)$ and
\begin{equation*}
R=\bigoplus_{i=0}^{p-1}\chi(i)\varphi(R).
\end{equation*}
In particular, $R$ is free of rank $p$ over $\varphi(R)$.
\end{enumerate}

We remark first of all that one may iterate $(ii)$ $c$ times for any positive integer $c$ to obtain
\begin{equation}
R=\bigoplus_{i=0}^{p^c-1}\chi(i)\varphi^c(R)\ .
\label{iterate}
\end{equation}
Indeed, by induction we may assume that \eqref{iterate} holds for $c-1$ and obtain
\begin{equation*}
R=\bigoplus_{k=0}^{p^{c-1}-1}\chi(k)\varphi^{c-1}(R)=
\bigoplus_{k=0}^{p^{c-1}-1}\chi(k)\varphi^{c-1}\left(\bigoplus_{j=0}^{p-1}\chi(j)\varphi(R)\right)=
\bigoplus_{k=0}^{p^{c-1}-1}\bigoplus_{j=0}^{p-1}\chi(k+p^{c-1}j)\varphi^c(R)\ 
\end{equation*}
since $\varphi^{c-1}$ takes direct sums to direct sums as it is injective. Now the claim follows from noting that any integer $0\leq i\leq p^c-1$ can be uniquely written in the form $i=k+p^{c-1}j$ with $0\leq k\leq p^{c-1}-1$ and $0\leq j\leq p-1$.

Moreover, for any $x\in\mathbb{Z}_p$ we have $\chi(p^cx)=\varphi^c(\chi(x))\in\varphi^c(R)^\times$. Hence $\chi(i)\varphi^c(R)=\chi(i+p^cx)\varphi^c(R)$ and we may replace each value of $i$ in the formula \eqref{iterate} by any element in the coset $i+p^c\mathbb{Z}_p$.

\begin{df}\label{phiringdef}
We call a ring $R$ with the above properties $(i)$ and $(ii)$ a $\varphi$-ring
over $\mathbb{Z}_p$ or often just a $\varphi$-ring. 
\end{df}
For example, if
$K/\mathbb{Q}_p$ is a finite extension with ring of integers $o$ and
uniformizer $p_K$ then the Iwasawa algebra $o\bs T\js$ is a $\varphi$-ring
with the homomorphism  
\begin{eqnarray*}
\chi\colon\mathbb{Z}_p&\rightarrow&o\bs T\js\\
1&\mapsto&1+T
\end{eqnarray*}
and Frobenius $\varphi(T)=(T+1)^p-1$. Similarly with the same $\chi$ and
$\varphi$, Fontaine's ring $\mathcal{O_E}$, its field of fractions $\mathcal{E}$, the Robba ring $\mathcal{R}$ and
the rings $\mathcal{E}^\dagger$, $\mathcal{O}_{\mathcal{E}}^{\dagger}$ of
overconvergent power series are also $\varphi$-rings (for the definitions see
the paragraph before Lemma \ref{isom} (for $\mathcal{O_E}$ and $\mathcal{E}$), section
\ref{partialrobba} (for $\mathcal{R}$, $\mathcal{O}_{\mathcal{E}}^{\dagger}$,
and $\mathcal{E}^\dagger$)).

\begin{lem}\label{polynomial}
For any positive integer $c$ we have a ring isomorphism
\begin{eqnarray*}
\varphi^c(R)[X]/(X^{p^c}-\chi(p^c))&\overset{\sim}{\longrightarrow}&R\\
X&\longmapsto&\chi(1)\ .
\end{eqnarray*}
\end{lem}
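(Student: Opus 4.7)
The plan is to produce the isomorphism by sending $X$ to $\chi(1)$ and using the iterated decomposition \eqref{iterate} to check bijectivity.

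First I would verify that the assignment $X \mapsto \chi(1)$ extends to a well-defined ring homomorphism out of the quotient. The polynomial ring $\varphi^c(R)[X]$ admits the evaluation-at-$\chi(1)$ map into $R$ (using the inclusion $\varphi^c(R) \subseteq R$). To descend it to the quotient by $(X^{p^c}-\chi(p^c))$ I only need to observe that $\chi$ is a homomorphism from the additive group of $\mathbb{Z}_p$ to the multiplicative group $R^\times$, whence $\chi(1)^{p^c} = \chi(p^c)$, so $X^{p^c}-\chi(p^c)$ lies in the kernel.

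Next, since $X^{p^c}-\chi(p^c)$ is monic of degree $p^c$, every element of $\varphi^c(R)[X]/(X^{p^c}-\chi(p^c))$ has a unique representative of the form $\sum_{i=0}^{p^c-1} a_i X^i$ with $a_i \in \varphi^c(R)$. This representative is sent to
\begin{equation*}
\sum_{i=0}^{p^c-1} a_i \chi(1)^i \;=\; \sum_{i=0}^{p^c-1} \chi(i)\, a_i \in R,
\end{equation*}
where I used commutativity of $R$ together with $\chi(1)^i=\chi(i)$.

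Finally I would invoke the iterated direct sum decomposition \eqref{iterate}, namely $R = \bigoplus_{i=0}^{p^c-1} \chi(i)\varphi^c(R)$. Surjectivity is immediate from this, and injectivity follows because the displayed sum equals zero in $R$ only if each summand $\chi(i) a_i$ vanishes, forcing each $a_i = 0$ (note $\chi(i)$ is a unit, but one does not even need this: the direct sum already rules out non-trivial cancellations). I do not anticipate any genuine obstacle; the only thing one must be careful about is recording that the source is well-defined as a quotient, which rests solely on $\chi$ being an additive-to-multiplicative homomorphism.
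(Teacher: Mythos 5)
Your proposal is correct and follows essentially the same route as the paper: evaluate at $\chi(1)$, check that $X^{p^c}-\chi(p^c)$ dies because $\chi$ turns addition into multiplication, and then match the free rank-$p^c$ bases $\{X^i\}$ and $\{\chi(i)\}$ over $\varphi^c(R)$ via the iterated decomposition \eqref{iterate}. One tiny caveat: your parenthetical claim that the unit property of $\chi(i)$ is dispensable is not quite right—the direct sum only gives $\chi(i)a_i=0$ for each $i$, and passing to $a_i=0$ does use that $\chi(i)\in R^{\times}$ (or at least that it is a nonzerodivisor), which you have anyway.
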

\begin{proof}
Since the polynomial ring $\varphi^c(R)[X]$ is a free object in the category of commutative $\varphi^c(R)$-algebras, we may extend the natural inclusion homomorphism $f\colon\varphi^c(R)\hookrightarrow R$ given by $(ii)$ to a ring homomorphism $\tilde{f}\colon\varphi^c(R)[X]\rightarrow R$ by any free choice for the value $\tilde{f}(X)$, in particular such that $\tilde{f}(X):=\chi(1)\in R$ and, of course, $\tilde{f}_{|\varphi^c(R)}:=f$. We need to show that $\tilde{f}$ is surjective with kernel equal to the ideal generated by $X^{p^c}-\chi(p^c)$. Note that $\chi(p^c)=\varphi^c(\chi(1))$ lies in $\varphi^c(R)$ so the claim makes sense. 

By $(i)$ the map $\chi$ is a group homomorphism, so $\chi(r)=\chi(1)^r=\tilde{f}(X)^r=\tilde{f}(X^r)$ lies in the image of $\tilde{f}$ for any positive integer $r$. Hence we obtain the surjectivity from \eqref{iterate} by noting that $\varphi^c(R)$ also lies in the image of $\tilde{f}$.

Using again $\chi(r)=\tilde{f}(X^r)$ with the choice of $r=p^c$ we see immediately that $X^{p^c}-\chi(p^c)$ lies in the kernel of $\tilde{f}$. Moreover, note that $\varphi^c(R)[X]/(X^{p^c}-\chi(p^c))$ is a free module of rank $p^c$ over $\varphi^c(R)$ with generators the classes of $\{X^r\}_{r=0}^{p^c-1}$ in the quotient. On the other hand, $R$ is also a free module of rank $p^c$ with generators $\{\chi(r)\}_{r=0}^{p^c-1}$ by \eqref{iterate} and these two sets of generators correspond to each other under the map $\tilde{f}$ hence the isomorphism.
\end{proof}

Let $H_0$ be a pro-$p$ group of finite rank (therefore a compact $p$-adic Lie group by Corollary 4.3 and Theorem 8.18 in \cite{DDMS}) without elements of order $p$ admitting a continuous surjective group homomorphism $\ell\colon
H_0\twoheadrightarrow\mathbb{Z}_p$ with kernel $H_1:=\Ker(\ell)$. We assume
further the following
\begin{enumerate}[$(A)$]
\item $H_0$ also admits an injective group endomorphism $\varphi\colon
H_0\hookrightarrow H_0$ with finite cokernel and compatible with $\ell$ in the
sense that $\ell(\varphi(h))=\varphi(\ell(h))=p\ell(h)$. In particular, we
have $\varphi(H_1)\subseteq H_1$.
\item $\bigcap_{n\geq 1}\varphi^n(H_0)=\{1\}$ and the subgroups $\varphi^n(H_0)$ form a system of neighbourhoods of $1$ in $H_0$.
\end{enumerate}

We remark first of all that by a Theorem of Serre (Thm.\ 1.17 in \cite{DDMS}) any finite index subgroup in $H_0$ is open. Hence the homomorphism $\varphi$ is automatically continuous and the subgroups $\varphi^n(H_0)$ are open.

Note that $H_1$ is a closed subgroup of $H_0$ hence it is also a pro-$p$ group of finite rank. By assumption $(B)$ we also have in particular that the subgroups
$\varphi^n(H_1)$ form a system of open neighbourhoods of $1$ in $H_1$. Note
that the subgroups $\varphi^n(H_1)$ may not be normal in either $H_1$ or
$H_0$. Hence we define the normal subgroups $H_k\lhd
H_0$ as the normal subgroup of $H_0$ generated by $\varphi^{k-1}(H_1)$. Since
$H_1$ is normal in $H_0$ we automatically have $H_k\subseteq H_1$ for any
$k\geq 1$. Moreover, since the $p$-adic Lie group $H_1$ has a system of
neighbourhoods of $1$ containing only characteristic subgroups, the $H_k$ also form
a system of neighbourhoods of $1$ in $H_1$. On the other hand, we have by
definition that $\varphi(H_k)\subseteq H_{k+1}\subseteq H_k$ for each $k\geq 1$. In
particular, we have an induced $\varphi$ action on the quotient group
$H_0/H_k$. This is, of course, no longer injective.

Since the group $\mathbb{Z}_p$ is topologically generated by one element, we
may find a splitting $\iota\colon\mathbb{Z}_p\hookrightarrow H_0$ for the
group homomorphism $\ell$. We fix this splitting $\iota$, too. Assume further that

\begin{enumerate}[$(C)$]
\item the group homomorphism $\iota$ is $\varphi$-equivariant, ie.\ we have $\iota(\varphi(x))=\varphi(\iota(x))$ for all $x\in \mathbb{Z}_p$.
\end{enumerate}

We define the skew group ring $R[H_1/H_k,\ell,\iota]$ as follows. We put
\begin{equation}
R[H_1/H_k,\ell,\iota]:=\bigoplus_{h\in H_1/H_k}Rh\ . \label{defR[H_1/H_k]}
\end{equation}
as left $R$-modules. Note that since $H_1$ is a normal subgroup in $H_0$, we also have $H_1/H_k\lhd H_0/H_k$. Therefore we obtain a conjugation action of $\mathbb{Z}_p$ on $H_1/H_k$ given by 
\begin{eqnarray*}
\rho\colon\mathbb{Z}_p&\to&\Aut(H_1/H_k)\\
z&\mapsto&(h\mapsto\iota(z)h\iota(z)^{-1}, h\in H_1/H_k)\ .
\end{eqnarray*}
Since $H_1/H_k$ is a finite $p$-group, $|\Aut(H_1/H_k)|<\infty$ and we have
an integer $c_k\geq1$ such that $p^{c_k}\mathbb{Z}_p\subseteq\Ker(\rho)$. The multiplication is defined so that $\varphi^{c_k}(R)$
commutes with elements $h$ in $H_1/H_k$ and $\chi(i)$ acts on $H_1/H_k$
via $\iota\circ\chi^{-1}$ and conjugation. More precisely for
$r_1,r_2\in R$ and $h_1,h_2\in H_1/H_k$ we may write 
\begin{equation}
r_2=\sum_{i=0}^{p^{c_k}-1}\chi(i)\varphi^{c_k}(r_{i,2})\label{expand}
\end{equation}
and put
\begin{equation}\label{mult}
(r_1h_1)(r_2h_2):=\sum_{i=0}^{p^{c_k}-1}r_1\chi(i)\varphi^{c_k}(r_{i,2})\left((\iota(i)^{-1}h_1\iota(i))h_2\right)\in\bigoplus_{h\in H_1/H_k}Rh.
\end{equation}
Note that in case $r_2=1$ we have $(r_1h_1)h_2=r_1(h_1h_2)$ and in case $h_1=1$ we have
$r_1(r_2h_2)=(r_1r_2)h_2$. Moreover, by the choice of $c_k$,
$\iota(p^{c_k}\mathbb{Z}_p)$ lies in the centre of $H_0/H_k$. So we may use
any set of representatives of $\mathbb{Z}_p/p^{c_k}\mathbb{Z}_p$ instead of
$\{0,1,\dots,p^{c_k}-1\}$ in \eqref{expand} in order to compute
\eqref{mult}. Indeed, if $i\equiv i'\pmod{p^{c_k}}$ then we have
$\chi(i)\varphi^{c_k}(r_{i,2})=\chi(i')\varphi^{c_k}(\chi(\frac{i-i'}{p^{c_k}})r_{i,2})$
and $\iota(i)^{-1}h_1\iota(i)=\iota(i')^{-1}h_1\iota(i')$.

\begin{lem}\label{lemmamult}
The multipliction \eqref{mult} equips $R[H_1/H_k,\ell,\iota]$ with a ring
structure. 
\end{lem}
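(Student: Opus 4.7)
The plan is to avoid a direct check of associativity by realizing $R[H_1/H_k,\ell,\iota]$ as a quotient of an ordinary skew polynomial ring, which is automatically associative. Let $S:=\varphi^{c_k}(R)[H_1/H_k]$ be the usual group ring of the finite $p$-group $H_1/H_k$ over the commutative ring $\varphi^{c_k}(R)$. Conjugation by $\iota(1)$ is an automorphism of $H_1/H_k$ which extends $\varphi^{c_k}(R)$-linearly to a ring automorphism $\sigma$ of $S$ that is the identity on $\varphi^{c_k}(R)$. I would then form the skew polynomial ring $S[X;\sigma]$ with defining relation $Xs=\sigma(s)X$, and observe that $X^{p^{c_k}}$ is central: it commutes with $\varphi^{c_k}(R)$ because $X$ does, and it commutes with each $h\in H_1/H_k$ because $\sigma^{p^{c_k}}=\id$ on $H_1/H_k$ by the defining property of $c_k$. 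Since $\chi(p^{c_k})=\varphi^{c_k}(\chi(1))\in\varphi^{c_k}(R)$ is central, the quotient $\widetilde{S}:=S[X;\sigma]/(X^{p^{c_k}}-\chi(p^{c_k}))$ is a well-defined associative ring with identity $1\cdot 1$.

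By Lemma~\ref{polynomial} the subring of $\widetilde{S}$ generated by $\varphi^{c_k}(R)$ and $X$ is identified with $R$ via $X\leftrightarrow\chi(1)$. The relation $hX^i=X^i\sigma^{-i}(h)$ together with the commutation of $\varphi^{c_k}(R)$ with $H_1/H_k$ shows that each element of $\widetilde{S}$ can uniquely be written as $\sum_{h\in H_1/H_k} r_h\,h$ with $r_h\in R$, so that $\widetilde{S}$ coincides with \eqref{defR[H_1/H_k]} as a left $R$-module. Finally, expanding $r_2=\sum_i X^i\varphi^{c_k}(r_{i,2})$ in $\widetilde{S}$, I would compute
\[
(r_1h_1)(r_2h_2)=\sum_i r_1 X^i \sigma^{-i}(h_1)\varphi^{c_k}(r_{i,2})h_2=\sum_i r_1\chi(i)\varphi^{c_k}(r_{i,2})\bigl(\iota(i)^{-1}h_1\iota(i)\bigr)h_2,
\]
which agrees exactly with the formula \eqref{mult}, using that $\sigma^{-i}(h_1)=\iota(i)^{-1}h_1\iota(i)$ and that $\sigma^{-i}(h_1)\in H_1/H_k$ commutes with $\varphi^{c_k}(r_{i,2})$.

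The only genuine obstacle is associativity; distributivity follows from the uniqueness of the decomposition \eqref{iterate} (needed for the additivity of the expansion \eqref{expand} in $r_2$) together with the $R$-linearity of \eqref{mult} in $r_1$, and the identity property is immediate. The skew polynomial ring approach outsources associativity to the standard Ore extension construction. Alternatively, one could check associativity directly by a bilinear reduction to triples of ``pure'' generators $1\cdot h$, $\chi(i)\cdot 1$ and $\varphi^{c_k}(r)\cdot 1$, but this becomes notationally heavy because each rearrangement invokes the same commutation relation $\chi(i)h=\bigl(\iota(i)h\iota(i)^{-1}\bigr)\chi(i)$ already built into the skew polynomial ring construction.
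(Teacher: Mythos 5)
Your proposal is correct and follows essentially the same route as the paper's own proof: the paper also realizes $R[H_1/H_k,\ell,\iota]$ as the quotient $S[X,\sigma]/(X^{p^{c_k}}-\chi(p^{c_k}))$ of a skew polynomial ring over the group ring $S=\varphi^{c_k}(R)[H_1/H_k]$, checks centrality of $X^{p^{c_k}}-\chi(p^{c_k})$ via $\sigma^{p^{c_k}}=\id$, identifies $R$ inside using Lemma \ref{polynomial}, and verifies that the transported multiplication is \eqref{mult}. The only difference is your opposite (but equivalent) convention for the commutation relation and for $\sigma$, which changes nothing of substance.
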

\begin{proof}
There exists an easy, but rather long computation showing this. However, there
is another, more conceptual description of the ring $R[H_1/H_k,\ell,\iota]$
pointed out by Torsten Schoeneberg that proves this lemma without any
computations. Let $S$ be the group ring $S:=\varphi^{c_k}(R)[H_1/H_k]$ and
$\sigma$ be the automorphism of $S$ trivial on $\varphi^{c_k}(R)$ and acting
by conjugation with $\iota(1)$ on $H_1/H_k$, ie.\ for $h\in H_1/H_k$ put
$\sigma(h):=\iota(1)^{-1}h\iota(1)$. Now define the skew polynomial ring
$S[X,\sigma]$ by the relation $aX=X\sigma(a)$ for $a\in S$. Note that by the definition of $\sigma$, the subring $\varphi^{c_k}(R)$ lies in the centre of $S[X,\sigma]$ therefore so does $\chi(p^{c_k})= \varphi^{c_k}(\chi(1))\in \varphi^{c_k}(R)$. On the other hand, we have $aX^{p^{c_k}}=X^{p^{c_k}}\sigma^{p^{c_k}}(a)=X^{p^{c_k}}a$ for all $a\in S$ since $\sigma^{p^{c_k}} $ is the conjugation by the central element $\iota(1)^{p^{c_k}}=\iota(p^{c_k})\in H_0/H_k$ on $H_1/H_k$ and is trivial by definition on $\varphi^{c_k}(R)$ hence $\sigma^{p^{c_k}}=\id_S$. This shows that  $X^{p^{c_k}}-\chi(p^{c_k})$ is central and that $S[X,\sigma](X^{p^{c_k}}-\chi(p^{c_k}))=(X^{p^{c_k}}-\chi(p^{c_k}))S[X,\sigma]$ is a two-sided ideal in $S[X,\sigma]$. So we may form the quotient ring and compute (as left $\varphi^{c_k}(R)$-modules)
\begin{align*}
S[X,\sigma]/(X^{p^{c_k}}-\chi(p^{c_k}))\cong\left(\bigoplus_{r=0}^{\infty}\bigoplus_{h\in
  H_1/H_k}X^r\varphi^{c_k}(R)h\right)/ (X^{p^{c_k}}-\chi(p^{c_k}))\cong\\
\cong\bigoplus_{h\in
  H_1/H_k}\left(\varphi^{c_k}(R)[X]/(X^{p^{c_k}}-\chi(p^{c_k}))\right)h\cong \bigoplus_{h\in
  H_1/H_k}Rh
\end{align*}
using Lemma \ref{polynomial} in the middle. Note that on the component $h=1$ in the above direct sum the identification is even multiplicative as Lemma \ref{polynomial} gives an isomorphism of rings, not just $\varphi^{c_k}(R)$-modules. Hence $S[X,\sigma]/(X^{p^{c_k}}-\chi(p^{c_k}))$ contains $R$ as a subring and the isomorphism above is an isomorphism of left $R$-modules. 
The transport of ring structure gives back the definition \eqref{mult} of
multiplication on the right hand side. Indeed, we have
\begin{align*}
(r_1h_1)(r_2h_2)=\sum_{i=0}^{p^{c_k}-1}r_1h_1\chi(i)\varphi^{c_k}(r_{i,2})h_2=\sum_{i=0}^{p^{c_k}-1}r_1h_1X^i\varphi^{c_k}(r_{i,2})h_2=\\
=\sum_{i=0}^{p^{c_k}-1}r_1X^i\sigma^i(h_1)\varphi^{c_k}(r_{i,2})h_2=\sum_{i=0}^{p^{c_k}-1}r_1\chi(i)\varphi^{c_k}(r_{i,2})\left((\iota(i)^{-1}h_1\iota(i))h_2\right)
\end{align*}
since $\chi(i)$ corresponds to $X^i$ under the isomorphism in Lemma \ref{polynomial}.
\end{proof}

We further have a natural action of $\varphi$ on
$R[H_1/H_k,\ell,\iota]$ coming from the $\varphi$-action on both $R$ and
$H_1/H_k$ by putting $\varphi(rh):=\varphi(r)\varphi(h)$ for $r\in R$ and $h\in H_1/H_k$.

\begin{lem}
The map $\varphi\colon R[H_1/H_k,\ell,\iota]\to R[H_1/H_k,\ell,\iota]$ defined above is a ring homomorphism.
\end{lem}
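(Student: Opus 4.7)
The plan is to verify the ring homomorphism property via the alternative description of $R[H_1/H_k,\ell,\iota]$ given by Schoeneberg in the proof of Lemma~\ref{lemmamult}, namely as the quotient of the skew polynomial ring $S[X,\sigma]$ by $(X^{p^{c_k}}-\chi(p^{c_k}))$, where $S:=\varphi^{c_k}(R)[H_1/H_k]$. This detour avoids the bookkeeping that a direct computation would require, namely rewriting $\varphi\bigl(\sum_i\chi(i)\varphi^{c_k}(r_{i,2})\bigr)=\sum_i\chi(pi)\varphi^{c_k+1}(r_{i,2})$ back into the standard form with indices in $\{0,\dots,p^{c_k}-1\}$.

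First I observe that $\varphi(H_k)\subseteq H_{k+1}\subseteq H_k$ is built into the definition of the $H_k$, so $\varphi$ descends to a group endomorphism of $H_1/H_k$ (and of $H_0/H_k$). This makes the formula $\varphi(rh)=\varphi(r)\varphi(h)$ well-defined on each summand of \eqref{defR[H_1/H_k]}, and additivity of the candidate map is then immediate.

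The main work is to produce a ring endomorphism $\Phi$ of $S[X,\sigma]$ with $\Phi(X)=X^p$ and $\Phi|_S$ given by applying $\varphi$ coefficient-wise on $\varphi^{c_k}(R)$ and by the induced $\varphi$ on $H_1/H_k$. That $\Phi|_S$ is a ring endomorphism of the ordinary group ring $S$ is straightforward, since $\varphi^{c_k+1}(R)\subseteq\varphi^{c_k}(R)$ is central in $S$. Compatibility with the skew relation $aX=X\sigma(a)$ amounts to the identity $\Phi(a)X^p=X^p\Phi(\sigma(a))$, and, using $aX^p=X^p\sigma^p(a)$ obtained by iterating the skew relation, this reduces to the twisted equivariance
\begin{equation*}
\Phi\circ\sigma=\sigma^p\circ\Phi\quad\text{on }S.
\end{equation*}
This is the only genuine obstacle, and is precisely where condition~$(C)$ is used: on $\varphi^{c_k}(R)$ both sides act as $\varphi$, while on $h\in H_1/H_k$ the two sides evaluate to $\varphi(\iota(1))^{-1}\varphi(h)\varphi(\iota(1))$ and $\iota(1)^{-p}\varphi(h)\iota(1)^p=\iota(p)^{-1}\varphi(h)\iota(p)$, which agree thanks to $\varphi(\iota(1))=\iota(\varphi(1))=\iota(p)$.

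Finally, $\Phi$ descends to the quotient because $X^{p^{c_k}}$ and $\chi(p^{c_k})$ are both central (the latter lies in $\varphi^{c_k}(R)$, and $\sigma^{p^{c_k}}=\id$ forces $X^{p^{c_k}}$ to commute with all of $S$), hence
\begin{equation*}
\Phi(X^{p^{c_k}}-\chi(p^{c_k}))=(X^{p^{c_k}})^p-\chi(p^{c_k})^p
\end{equation*}
factors through $X^{p^{c_k}}-\chi(p^{c_k})$ by the usual commutative factorisation of $a^p-b^p$. The induced ring endomorphism of the quotient agrees on $h\in H_1/H_k$ and on $\chi(i)\leftrightarrow X^i$ (whose image is $X^{pi}\leftrightarrow\chi(pi)=\varphi(\chi(i))$) with the map $rh\mapsto\varphi(r)\varphi(h)$ from the statement, so by additivity the two maps coincide, proving that the latter is a ring homomorphism.
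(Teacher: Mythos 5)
Your proof is correct, but it takes a genuinely different route from the paper's. The paper verifies multiplicativity by a direct computation with the multiplication formula \eqref{mult}: it expands $r_2=\sum_i\chi(i)\varphi^{c_k}(r_{i,2})$, applies $\varphi$, and regroups, using the $\varphi$-equivariance of $\chi_k$ and the observation that the representatives $i$ of $\mathbb{Z}_p/p^{c_k}\mathbb{Z}_p$ may be replaced by $pi$. You instead push the Schoeneberg description $S[X,\sigma]/(X^{p^{c_k}}-\chi(p^{c_k}))$ from the proof of Lemma \ref{lemmamult} one step further: you build an endomorphism $\Phi$ of the Ore extension with $\Phi(X)=X^p$, the only nontrivial input being the twisted equivariance $\Phi\circ\sigma=\sigma^p\circ\Phi$ on $S$ (this is exactly where condition $(C)$, i.e.\ $\varphi(\iota(1))=\iota(p)$, enters---the same place the paper uses $\iota(pi)$), and then descend through the central element $X^{p^{c_k}}-\chi(p^{c_k})$ via the factorisation of $a^p-b^p$ for commuting central elements. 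Both arguments are sound; yours buys conceptual uniformity (the ring structure and the compatibility of $\varphi$ with it are both read off from the same skew-polynomial model, and the coset-representative bookkeeping disappears), at the cost of invoking the universal property of the skew polynomial ring and a descent step, whereas the paper's computation is shorter and self-contained. One small point worth making explicit at the end: to see that the induced endomorphism of the quotient equals $rh\mapsto\varphi(r)\varphi(h)$, decompose $rh=\sum_i\chi(i)\varphi^{c_k}(r_i)h$ as in \eqref{iterate}; multiplicativity of the induced map gives the value $\chi(pi)\varphi^{c_k+1}(r_i)\varphi(h)=\varphi\bigl(\chi(i)\varphi^{c_k}(r_i)\bigr)\varphi(h)$ on each summand, which matches $\varphi(r)\varphi(h)$ because $\varphi$ is a ring endomorphism of $R$; additivity of both maps then finishes it---this is what your closing ``by additivity'' is implicitly doing.
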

\begin{proof}
The additivity is clear, so it suffices to check the multiplicativity. Using \eqref{mult} we compute
\begin{align*}
\varphi((r_1h_1)(r_2h_2))=\sum_{i=0}^{p^{c_k}-1}\varphi\left(r_1\chi(i)\varphi^{c_k}(r_{i,2})\right)\varphi\left((\iota(i)^{-1}h_1\iota(i))h_2\right)=\\
=\sum_{i=0}^{p^{c_k}-1}\varphi(r_1)\chi(pi)\varphi^{c_k+1}(r_{i,2})\left((\iota(pi)^{-1}\varphi(h_1)\iota(pi))\varphi(h_2)\right)=\\
=\sum_{i=0}^{p^{c_k}-1}\varphi(r_1)\varphi^{c_k+1}(r_{i,2})\varphi(h_1)\chi(pi)\varphi(h_2)=\varphi(r_1)\varphi(h_1)\sum_{i=0}^{p^{c_k}-1}\chi(pi)\varphi^{c_k+1}(r_{i,2})\varphi(h_2)=\\
=\varphi(r_1h_1)\varphi\left(\sum_{i=0}^{p^{c_k}-1}\chi(i)\varphi^{c_k}(r_{i,2})h_2\right)=\varphi(r_1h_1)\varphi(r_2h_2)\ .
\end{align*}
\end{proof}

 Moreover, the map $\chi$ and the inclusion of the group $H_1/H_k$
in the multiplicative group of $R[H_1/H_k,\ell,\iota]$ are compatible in the
sense that they glue together to a $\varphi$-equivariant group
homomorphism $\chi_k\colon H_0\to R[H_1/H_k,\ell,\iota]^{\times}$ (with
kernel $\Ker\chi_k=H_k$) making the diagram 
\begin{equation}\label{chicomm}
\xymatrix{
\mathbb{Z}_p\ar[r]_{\iota} \ar[d]_{\chi} &H_0 \ar[d]_{\chi_{k}}\\
R\ar[r]_{\iota_{R,k}} & R[H_1/H_k,\ell]
}
\end{equation}
commutative where $\iota_{R,k}$ is the natural inclusion of $R$ in $R[H_1/H_k,\ell]$. Indeed, $H_0\cong\iota(\mathbb{Z}_p)\ltimes H_1$, so we put
$\chi_k(\iota(i)h):=\chi(i)(hH_k)$ for $i\in\mathbb{Z}_p$, $h\in H_1$ and compute
\begin{align*}
\chi_k(\iota(i_1)h_1\iota(i_2)h_2)=\chi_k(\iota(i_1+i_2)\iota(i_2)^{-1}h_1\iota(i_2)h_2)=\chi(i_1+i_2)(\iota(i_2)^{-1}h_1\iota(i_2)h_2)H_k=\\
=\chi(i_1)(h_1H_k)\chi(i_2)(h_2H_k)=\chi_k(\iota(i_1)h_1)\chi_k(\iota(i_2)h_2)
\end{align*}
showing that $\chi_k$ is indeed a group homomorphism. The commutativity
of the diagram \eqref{chicomm} is clear by definition. Moreover,
$\chi_k$ is $\varphi$-equivariant, since we have  
\begin{equation*}
\chi_k\circ\varphi(\iota(i)h)=\chi_k(\iota(pi)\varphi(h))=\chi(pi)\varphi(h)H_k=\varphi(\chi(i)hH_k)=\varphi\circ\chi_k(\iota(i)h)\ .
\end{equation*}

\begin{lem}
The above definition of $R[H_1/H_k,\ell,\iota]$ does not depend on the choice
of the section $\iota$ up to natural isomorphism.
\end{lem}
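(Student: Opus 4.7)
My plan is to construct an explicit ring isomorphism $\Phi\colon A'\xrightarrow{\sim} A$ between $A:=R[H_1/H_k,\ell,\iota]$ and $A':=R[H_1/H_k,\ell,\iota']$ via the skew-polynomial presentation from Lemma~\ref{lemmamult}. Since $\ell\iota=\ell\iota'=\mathrm{id}$, the formula $u(z):=\iota'(z)\iota(z)^{-1}$ defines a continuous map $u\colon\mathbb{Z}_p\to H_1$ measuring the discrepancy of the two sections; a direct computation yields the $1$-cocycle relation
\[
u(z_1+z_2)=u(z_1)\cdot\iota(z_1)u(z_2)\iota(z_1)^{-1},
\]
and the $\varphi$-equivariance assumption $(C)$ for both sections forces $u(pz)=\varphi(u(z))$, hence $u(p^{c_k})=\varphi^{c_k}(u(1))$. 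Using Lemma~\ref{lemmamult}, I identify $A\cong S[X,\sigma]/(X^{p^{c_k}}-\chi(p^{c_k}))$ and $A'\cong S[X',\sigma']/(X'^{p^{c_k}}-\chi(p^{c_k}))$ with the common $S=\varphi^{c_k}(R)[H_1/H_k]$; substituting $\iota'(1)=u(1)\iota(1)$ into the definitions of $\sigma$ and $\sigma'$ yields the inner-twist identity
\[
\sigma'(a)=\sigma(u(1))^{-1}\sigma(a)\sigma(u(1))\qquad(a\in S).
\]

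The key step is to define $\tilde\Phi\colon S[X',\sigma']\to S[X,\sigma]$ to be the identity on $S$ and to send $X'\mapsto u(1)X$. The inner-twist identity above is exactly what is needed to preserve the relation $aX'=X'\sigma'(a)$, so $\tilde\Phi$ is a ring homomorphism, and the main task is to check that it descends to the quotients. Induction via $Xa=\sigma^{-1}(a)X$ gives $(u(1)X)^{n}=\bigl(\prod_{i=0}^{n-1}\sigma^{-i}(u(1))\bigr)X^n$, while iterating the cocycle relation shows $\prod_{i=0}^{n-1}\sigma^{-i}(u(1))=u(n)$ in $H_1/H_k$. For $n=p^{c_k}$ this product therefore equals $\varphi^{c_k}(u(1))\in\varphi^{c_k}(H_1)$, which lies in $H_k$ as soon as $c_k\geq k-1$. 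Combined with $X^{p^{c_k}}=\chi(p^{c_k})$ in $A$, this gives $(u(1)X)^{p^{c_k}}=\chi(p^{c_k})$, so $\tilde\Phi$ descends to $\Phi\colon A'\to A$.

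The main obstacle is that $c_k\geq k-1$ is not built into the defining condition $p^{c_k}\mathbb{Z}_p\subseteq\ker\rho$. The way around is that by the refinement formula \eqref{iterate} both the ring $R[H_1/H_k,\ell,\iota]$ and the definition of $\tilde\Phi$ (which at the level of $A,A'$ only requires sending $\chi(1)\in A'$ to $u(1)\chi(1)\in A$) are invariant under enlarging $c_k$, so I may replace $c_k$ by $\max(c_k,k-1)$ at no cost. Once $\Phi$ is defined, bijectivity follows because $\Phi$ permutes the $\varphi^{c_k}(R)$-basis via $hX'^j\mapsto(hu(j))X^j$, and swapping the roles of $\iota$ and $\iota'$ in the construction produces an inverse. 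The asserted naturality is finally the intrinsic characterization: $\Phi$ is the unique ring homomorphism $A'\to A$ which is the identity on $H_1/H_k\subset(A')^\times$ and which sends $\chi(z)\in R\subset A'$ to $u(z)\chi(z)=\chi_k(\iota'(z))\in A$ for every $z\in\mathbb{Z}_p$.
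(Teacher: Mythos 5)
Your argument is essentially correct, but it takes a genuinely different route from the paper. The paper constructs the same map directly on the decomposition $\bigoplus_h Rh$: it sets $\iota'_k\bigl(\sum_i\chi(i)\varphi^m(r_i)\bigr):=\sum_i\chi(i)\varphi^m(r_i)(\iota(i)^{-1}\iota'(i))$ for $m\geq m_k+\max(c_k,c_k')$, where $m_k$ is chosen so that $H_1^{p^{m_k}}\subseteq H_k$, and then verifies multiplicativity and $\varphi$-equivariance by an explicit expansion-to-depth-$m$ computation; the well-definedness there rests on $H_1^{p^{m_k}}\subseteq H_k$ together with centrality of $\iota(p^m),\iota'(p^m)$ modulo $H_k$, not on $\varphi$-equivariance of the sections. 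You instead exploit the presentation of Lemma \ref{lemmamult}, so that everything reduces to one relation check (your inner-twist identity, which is correct) plus one descent condition, namely that $(u(1)X)^{p^{c}}=u(p^{c})X^{p^{c}}$ with $u(p^{c})$ trivial in $H_1/H_k$; your cocycle computation and the chain $u(p^c)=\varphi^c(u(1))\in\varphi^c(H_1)\subseteq H_{c+1}\subseteq H_k$ for $c\geq k-1$ are both valid. Since your $\Phi$ is the identity on $H_1/H_k$ and sends $\chi(z)$ to $\chi_k(\iota'(z))$, it coincides with the paper's isomorphism; what your route buys is the elimination of the long multiplicativity computation, at the cost of enlarging the exponent and of using assumption $(C)$ for both sections.

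Two points should be tightened. First, to present $A'$ over the same $S$ with the same exponent you also need $\sigma'^{p^{c}}=\id_S$, i.e.\ $c\geq c_k'$; so the enlargement must be to $\max(c_k,c_k',k-1)$, not just $\max(c_k,k-1)$ (harmless, since your invariance-under-enlargement remark covers this, but as written $c_k'$ is missing). Second, your descent uses $u(pz)=\varphi(u(z))$, hence $\varphi$-equivariance of $\iota'$ as well as of $\iota$; the paper's well-definedness argument avoids this, although the paper too implicitly assumes it when proving $\varphi$-equivariance of the isomorphism. Relatedly, you never check that $\Phi$ itself is $\varphi$-equivariant, which the paper does and which is what is needed later so that $R\bs H_1,\ell\js$ carries a well-defined $\varphi$-action independent of $\iota$; with your normalization this is a short check from $\Phi(\chi(i))=u(i)\chi(i)$, $\varphi(u(i))=u(pi)$ and the fact that $\Phi$ is the identity on $\varphi^{c}(R)$ and on $H_1/H_k$, so please add it.
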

\begin{proof}
Let $\iota'\colon\mathbb{Z}_p\hookrightarrow H_0$ be another section of
$\ell$. Note that the integer $c_k$ depends on $\iota$ but we also have another integer $c_k'$ such that $\iota'(p^{c_k'})$ acts trivially by conjugation on $H_1/H_k$, ie.\ $\iota'(p^{c_k'})$ lies in the centre of $H_0/H_k$. On the other hand, we may choose $m_k\geq 0$ so that $H_1^{p^{m_k}}\subseteq H_k$ since $H_1/H_k$ is a finite $p$-group. From $\ell\circ\iota=\id_{\mathbb{Z}_p}=\ell\circ\iota'$ we see that
$\iota^{-1}\iota'(\mathbb{Z}_p)\subseteq\Ker(\ell)=H_1$ hence for any $x\in\mathbb{Z}_p$ we have 
\begin{align*}
\iota^{-1}(p^{m_k+\max(c_k,c_k')}x)\iota'(p^{m_k+\max(c_k,c_k')}x)= \iota^{-1}(p^{\max(c_k,c_k')}x)^{p^{m_k}}\iota'(p^{\max(c_k,c_k')}x)^{p^{m_k}}=\\ 
=\left(\iota^{-1}(p^{\max(c_k,c_k')}x)\iota'(p^{\max(c_k,c_k')}x)\right)^{p^{m_k}}\in H_1^{p^{m_k}}\subseteq H_k\ . 
\end{align*}
Therefore for $m\geq m_k+\max(c_k,c_k')$ the map 
\begin{eqnarray}
\iota_k'\colon R&\hookrightarrow& R[H_1/H_k,\ell,\iota]\notag\\
\iota'_k\left(\sum_{i=0}^{p^m-1}\chi(i)\varphi^m(r_i)\right)&:=&\sum_{i=0}^{p^m-1}\chi(i)\varphi^m(r_i)(\iota(i)^{-1}\iota'(i))\label{iota'}
\end{eqnarray}
extends to an isomorphism
\begin{eqnarray*}
\iota_k'\colon R[H_1/H_k,\ell,\iota']&\rightarrow& R[H_1/H_k,\ell,\iota]\\
rh&\mapsto&\iota'_k(r)h
\end{eqnarray*}
of $\varphi$-rings. Indeed, the map $\iota'_k$ is clearly additive and bijective.
We claim that it is multiplicative and $\varphi$-equivariant. We first
 show the latter statement and compute 
\begin{align*}
\varphi\circ\iota'_k\left(\sum_{i=0}^{p^m-1}\chi(i)\varphi^m(r_i)\right)=\sum_{i=0}^{p^m-1}\varphi(\chi(i)\varphi^m(r_i)(\iota(i)^{-1}\iota'(i)))=\\
\sum_{i=0}^{p^m-1}\chi(pi)\varphi^{m+1}(r_i)(\iota(pi)^{-1}\iota'(pi))
=\iota'_k\left(\sum_{i=0}^{p^m-1}\chi(pi)\varphi^{m+1}(r_i)\right)=\iota'_k\circ\varphi\left(\sum_{i=0}^{p^m-1}\chi(i)\varphi^m(r_i)\right)\ .
\end{align*} 

Note that since $m\geq \max(c_k,c_k')$ the subring
$\varphi^m(R)$ lies in the centre of both $R[H_1/H_k,\ell,\iota]$ and
$R[H_1/H_k,\ell,\iota']$. Therefore---in view of the associativity
 (Lemma \ref{lemmamult})---we may compute the multiplication
 \eqref{mult} by expanding elements of $R$ to degree $m$. So we write 
\begin{equation*}
r_1=\sum_{j=0}^{p^m-1}\chi(j)\varphi^m(r_{j,1})\ ,\quad
 r_2=\sum_{i=0}^{p^m-1}\chi(i)\varphi^m(r_{i,2})\ .
\end{equation*}
Moreover, we may compute \eqref{iota'} using 
any set of representatives of $\mathbb{Z}_p/p^m\mathbb{Z}_p$ (e.g.\
 $\{j,j+1,\dots,j+p^m-1\}$ instead of $\{0,1,\dots,p^m-1\}$) since
$\iota^{-1}\iota'(p^m\mathbb{Z}_p)\subseteq H_k$. Hence we obtain
\begin{align*}
\iota'_k\left((r_1h_1)(r_2h_2)\right)=\iota_k'\left(\sum_{i=0}^{p^m-1}r_1\chi(i)\varphi^m(r_{i,2})(\iota'(i)^{-1}h_1\iota'(i)h_2)\right)=\\
=\iota_k'\left(\sum_{i,j=0}^{p^m-1}\chi(j)\varphi^m(r_{j,1})\chi(i)\varphi^m(r_{i,2})(\iota'(i)^{-1}h_1\iota'(i)h_2)\right)=\\
=\sum_{i=0}^{p^m-1}\iota_k'\left(\sum_{j=0}^{p^m-1}\chi(i+j)\varphi^m(r_{j,1}r_{i,2})\right)\iota'(i)^{-1}h_1\iota'(i)h_2=\\
=\sum_{i,j=0}^{p^m-1}\chi(i+j)\varphi^m(r_{j,1}r_{i,2})\iota(i+j)^{-1}\iota'(i+j)\iota'(i)^{-1}h_1\iota'(i)h_2=\\
=\sum_{i,j=0}^{p^m-1}\chi(i+j)\varphi^m(r_{j,1}r_{i,2})\iota(i+j)^{-1}\iota'(j)h_1\iota'(i)h_2)=\\
=\sum_{i,j=0}^{p^m-1}\chi(j)\varphi^m(r_{j,1})\chi(i)\varphi^m(r_{i,r})\iota(i)^{-1}(\iota(j)^{-1}\iota'(j)h_1)\iota(i)(\iota(i)^{-1}\iota'(i)h_2)=\\
=\left(\sum_{j=0}^{p^m-1}\chi(j)\varphi^m(r_{j,1})(\iota(j)^{-1}\iota'(j)h_1)\right)
\left(\sum_{i=0}^{p^m-1}\chi(i)\varphi^m(r_{i,2})(\iota(i)^{-1}\iota'(i)h_2)\right)=\\
\left(\sum_{j=0}^{p^m-1}\varphi^m(r_{j,1})\iota'(j)h_1\right)
\left(\sum_{i=0}^{p^m-1}\varphi^m(r_{i,2})\iota'(i)h_2\right)=\iota'_k(r_1h_1)\iota_k'(r_2h_2)\ .
\end{align*}
\end{proof}

In view of the above Lemma we omit $\iota$ from the notation from now on. This
construction is compatible with the natural surjective homomorphisms 
$H_1/H_{k+1}\twoheadrightarrow H_1/H_k$ therefore the rings
$R[H_1/H_k,\ell]$ form an inverse system for the induced maps. So we may
define the completed skew group ring $R\bs H_1,\ell\js$ as the
projective limit
\begin{equation*}
R\bs H_1,\ell\js:=\varprojlim_{k}R[H_1/H_k,\ell].
\end{equation*}
We denote by $I_k$ the kernel of the canonical surjective homomorphism from
$R\bs H_1,\ell\js$ to $R[H_1/H_k,\ell]$.

Whenever $R$ is a topological ring we equip $R\bs H_1,\ell\js$ with the
projective limit topology of the product topologies on each $\bigoplus_{h\in H_1/H_k}Rh$.

The augmentation map $H_1\twoheadrightarrow1$ induces a ring homomorphism $\ell:=\ell_R\colon
R\bs H_1,\ell\js\twoheadrightarrow R$. This also has a section $\iota:=\iota_R=\varprojlim\iota_{R,k}\colon R\hookrightarrow R\bs
H_1,\ell\js$ (whenever clear we omit the subscript $_R$), ie.\ $\ell_R\circ\iota_R=\id_R$. Moreover, by \eqref{chicomm} the group homomorphism $\chi\colon
\mathbb{Z}_p\to R^{\times}$ extends to a group homomorphism $\chi_{H_0}\colon H_0\to
R\bs H_1,\ell\js^{\times}$ making the diagram
\begin{equation*}
\xymatrix{
\mathbb{Z}_p\ar[r]_{\iota} \ar[d]_{\chi} &H_0 \ar[d]_{\chi_{H_0}}\\
R\ar[r]_{\iota_R} & R\bs H_1,\ell\js
}
\end{equation*}
commutative.

The operator $\varphi$ acts naturally on this projective limit. If $R$ is a
topological ring and $\varphi$ acts continuously on $R$ then $\varphi$ also
acts continuously on each $R[H_1/H_k,\ell]$ and by taking the limit also on $R\bs H_1,\ell\js$. For an open subgroup $H'$ of a profinite group $H$ we use the notation $J(H/H')$ for a set of representatives of
the left cosets of $H'$ in $H$. Similarly, we use $J(H'\setminus H)$ for a set
of representatives of the right cosets $H'\setminus H$.

\begin{lem}\label{cosets}
\begin{enumerate}[$a)$]
\item Let $L\leq K\leq H$ be groups. Then the set $J(H/K)J(K/L)$ (resp.\ $J(L\setminus K)J(K\setminus H)$) is a set of representatives for the cosets $H/L$ (resp.\ for $L\setminus H$).
\item Let $K\leq H$ be groups and $N\lhd H$ a \emph{normal} subgroup. Then $J((K\cap N)\setminus N)$ is also a set of representatives for $K\setminus KN$. 
\end{enumerate}
\end{lem}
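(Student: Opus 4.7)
The plan for both parts is a direct existence-and-uniqueness argument for coset representatives; no single step is delicate, and the main task is just careful bookkeeping.

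For part $a)$, I would start with an arbitrary $h\in H$. By definition of $J(H/K)$ there is a unique $s\in J(H/K)$ with $h\in sK$, say $h=sk$ with $k\in K$. Applying the same reasoning inside $K$, there is a unique $t\in J(K/L)$ with $k\in tL$, say $k=tl$ with $l\in L$. Then $h=(st)l\in (st)L$, so every left coset of $L$ meets the product set $J(H/K)J(K/L)$. For uniqueness, suppose $s_1t_1L=s_2t_2L$ with $s_i\in J(H/K)$ and $t_i\in J(K/L)$. Since $L\subseteq K$, reducing modulo $K$ gives $s_1K=s_2K$, forcing $s_1=s_2$; cancelling this factor leaves $t_1L=t_2L$, which forces $t_1=t_2$. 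The right coset version is entirely symmetric (or is obtained by passing to the opposite group).

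For part $b)$, I would show that every right coset $Kh$ with $h\in KN$ has a unique representative in $J((K\cap N)\setminus N)$. Writing $h=kn$ with $k\in K$ and $n\in N$, we have $Kh=Kn$, so it suffices to treat elements of $N$. Now pick the unique $n'\in J((K\cap N)\setminus N)$ with $n\in (K\cap N)n'$; since $K\cap N\subseteq K$, this gives $Kn=Kn'$. For uniqueness, suppose $Kn_1=Kn_2$ with $n_1,n_2\in J((K\cap N)\setminus N)$. Then $n_1n_2^{-1}\in K$, and it also lies in $N$ since both $n_i\in N$, hence in $K\cap N$. So $(K\cap N)n_1=(K\cap N)n_2$, forcing $n_1=n_2$.

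The normality of $N$ is used only to guarantee that $KN$ is a subgroup so that $K\setminus KN$ makes sense as a set of right cosets in the usual way; the combinatorial content of the statement would go through for any subset $N$ satisfying $KN=NK$. As a sanity check one could also verify by counting (in the finite-index case) that $|J((K\cap N)\setminus N)|=[N:K\cap N]=[KN:K]$, which matches the cardinality of $K\setminus KN$; but the direct argument above is cleaner and avoids any finiteness assumption, which is desirable since the lemma will be applied to pro-$p$ groups.
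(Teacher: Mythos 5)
Your proof is correct and follows essentially the same route as the paper's: in $a)$ you reduce modulo $K$ to identify the $J(H/K)$-factors and then cancel, and in $b)$ you use that $n_1n_2^{-1}$ lies in $N$ and hence in $K\cap N$ whenever it lies in $K$, exactly as in the paper. Your closing observation that normality of $N$ is only needed so that $KN$ is a subgroup matches the remark the paper itself makes before its proof.
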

\begin{proof}
These are well-known facts in group theory, however, for the convenience
 of the reader we recall their proofs here. Note that in $b)$ we need
 $N$ to be a normal subgroup so that $KN$ is a subgroup of $H$. Also
 note that $J(K\setminus KN)$ might not lie in $N$ in general.

$a)$ Let $h_1,h_2\in J(H/K)$ and $k_1,k_2\in J(K/L)$. Suppose we have
 $h_1k_1L=h_2k_2L$. Then we also have $h_1^{-1}h_2\in K$ so $h_1=h_2$,
 whence $k_1^{-1}k_2\in L$ so $k_1=k_2$. So the elements of the set
 $J(H/K)J(K/L)$ are in distinct left cosets of $L$. On the other hand,
 if $hL\in H/L$ is a left coset, then we may first choose $h_1\in
 J(H/K)$ so that $h_1^{-1}h\in K$ and then $k_1\in J(K/L)$ so that
 $k_1^{-1}h_1^{-1}h\in L$, ie.\ $hL=h_1k_1L$.

$b)$ If $n_1\neq n_2\in J((K\cap N)\setminus N)$ are distinct then
 $Kn_1\neq Kn_2$ as $n_1n_2^{-1}$ does not lie in $K\cap N$, but it lies
 in $N$. On the other hand, if $kn\in KN$ then we may find $n_1\in
 J((K\cap N)\setminus N)$ such that $nn_1^{-1}\in K\cap N$ hence
 $knn_1^{-1}\in K$.
\end{proof}

\begin{pro}
The map $\varphi\colon R\bs H_1,\ell \js\rightarrow R\bs
H_1,\ell \js$ is injective. Moreover, we have
\begin{equation*}
R\bs H_1,\ell \js=\bigoplus_{h\in J(\varphi(H_0)\setminus H_0)}\varphi(R\bs H_1,\ell \js)h.
\end{equation*}
In particular, $R\bs H_1,\ell \js$ is a free (left) module of rank
$[H_0:\varphi(H_0)]$ over itself via $\varphi$.
\end{pro}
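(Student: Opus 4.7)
The plan is to reduce to the finite levels $A_k := R[H_1/H_k,\ell]$, prove the analogous statements there, and then pass to the inverse limit $\tilde R := R\bs H_1,\ell\js = \varprojlim_k A_k$, using throughout that $\varphi$ on $\tilde R$ is the projective limit of the induced endomorphisms $\varphi_k\colon A_k \to A_k$, $\sum_{\bar h} r_{\bar h}\bar h \mapsto \sum_{\bar h}\varphi(r_{\bar h})\varphi(h)H_k$. For \emph{injectivity}, take $x = (x_k)\in\tilde R$ with $\varphi(x)=0$ and fix $k$. Since $\varphi$ is injective on $H_0$ by $(A)$ and $\bigcap_m H_m = \{1\}$ by $(B)$, one can pick $m \geq k$ large enough that $H_1 \cap \varphi^{-1}(H_m) \subseteq H_k$. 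Then the fibers of $\bar h \mapsto \varphi(h)H_m$ on $H_1/H_m$, being cosets of $(H_1\cap\varphi^{-1}(H_m))H_m/H_m$, refine the quotient map $H_1/H_m \twoheadrightarrow H_1/H_k$. Writing $x_m = \sum_{\bar h} r_{\bar h}\bar h$ and using freeness of $A_m$ over $R$ on $H_1/H_m$, the vanishing $\varphi_m(x_m)=0$ yields $\sum_{\bar h\in F}\varphi(r_{\bar h})=0$ on each fiber $F$; injectivity of $\varphi\colon R\to R$ upgrades this to $\sum_{\bar h\in F} r_{\bar h}=0$, and collecting fibers inside each $H_k$-coset gives $x_k=0$.

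For the \emph{finite-level decomposition}, using $(C)$ (whence $\varphi(H_0) = \iota(p\mathbb{Z}_p)\varphi(H_1)$) together with $\iota(\mathbb{Z}_p)\cap H_1 = \{1\}$, one checks directly that $J := \{h\iota(i) : 0\leq i\leq p-1,\ h\in J(\varphi(H_1)\setminus H_1)\}$ is a set of representatives for $\varphi(H_0)\setminus H_0$. By $(B)$, $\varphi(H_1)$ is open, so $H_k \subseteq \varphi(H_1)$ for $k$ large; I restrict to such $k$, for which $J$ also represents $\varphi(H_0)H_k \setminus H_0$. Combining $A_k = \bigoplus_g Rg$ with $R = \bigoplus_{i=0}^{p-1}\chi(i)\varphi(R)$ from $(ii)$ and the commutativity of $R$, the set $\{\chi_k(\iota(i)g) : 0\leq i\leq p-1,\ g\in H_1/H_k\}$ is a free left $\varphi(R)$-basis of $A_k$, and similarly $A_k' := \varphi_k(A_k) = \bigoplus_{g''\in\varphi(H_1)/H_k}\varphi(R)\chi_k(g'')$. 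Both parametrizations $\{\iota(i)g\}$ and $\{g''g' : g''\in\varphi(H_1)/H_k,\ g'\in J\}$ give systems of representatives for $H_0/\iota(p\mathbb{Z}_p)H_k$, and any two representatives of the same coset differ by an element $\iota(pz)h_k$ whose $\chi_k$-image $\chi_k(\iota(pz)) = \varphi(\chi(z))$ lies in $\varphi(R)^\times$. Hence $\{\chi_k(g''g')\}$ is another $\varphi(R)$-basis of $A_k$, giving the finite-level identity $A_k = \bigoplus_{g'\in J} A_k'\chi_k(g')$.

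\emph{Passage to the limit.} Since $J$ is a fixed finite set for $k$ large and the decompositions are compatible with the transition maps, one obtains $\tilde R = \bigoplus_{g'\in J} N\chi_{H_0}(g')$, where $N := \varprojlim_k A_k'$. The containment $\varphi(\tilde R)\subseteq N$ is immediate; for the reverse, given $(a_k)\in N$ I inductively build $(b_k)\in\tilde R$ with $\varphi_k(b_k)=a_k$: applying the snake lemma to the short exact sequences $0\to\ker\varphi_k\to A_k\to A_k'\to 0$, whose vertical transitions $A_{k+1}\twoheadrightarrow A_k$ and $A_{k+1}'\twoheadrightarrow A_k'$ are surjective, gives surjectivity of $\ker\varphi_{k+1}\twoheadrightarrow\ker\varphi_k$, which is exactly the correction needed to adjust any preliminary lift of $a_{k+1}$ to be compatible with $b_k$. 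Thus $N = \varphi(\tilde R)$, yielding the desired decomposition; combined with injectivity, $\tilde R$ is a free left $\tilde R$-module of rank $|J| = [H_0:\varphi(H_0)]$ via $\varphi$. The main obstacle is this final Mittag-Leffler-style lifting argument; the rest reduces to careful bookkeeping with the $\varphi(R)$-bases afforded by $(ii)$.
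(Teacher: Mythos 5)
Your injectivity argument and your finite-level decomposition are sound and close to the paper's own (the paper proves the same decomposition of $R[H_1/H_k,\ell]$ by a direct computation with the expansion $r=\sum_i\varphi(r_i)\chi(i)$ and the factorisation $\iota(i)h\iota(i)^{-1}=\varphi(u)v$, whereas you argue by a change of $\varphi(R)$-basis by units; that variant is fine modulo the left/right coset bookkeeping you gloss over). The genuine gap is in the passage to the limit. The snake lemma does \emph{not} give surjectivity of $\ker\varphi_{k+1}\to\ker\varphi_k$ from surjectivity of $A_{k+1}\twoheadrightarrow A_k$ and $A'_{k+1}\twoheadrightarrow A'_k$: the snake exact sequence only shows that the cokernel of the kernel map is a quotient of $\ker(A'_{k+1}\to A'_k)$, which has no reason to vanish. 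Worse, in the present situation the claimed surjectivity is actually false in general: the induced $\varphi$ on $H_1/H_k$ is not injective, so $\ker\varphi_k\neq 0$, while (as your own injectivity paragraph shows) the image of $\ker\varphi_m$ in $\ker\varphi_k$ is \emph{zero} once $H_1\cap\varphi^{-1}(H_m)\subseteq H_k$; if the one-step kernel transitions were onto, that image would be all of $\ker\varphi_k$, a contradiction. For a concrete instance take $H_0=\mathbb{Z}_p\times\mathbb{Z}_p$, $\ell$ the first projection and $\varphi$ multiplication by $p$: then $H_1\cap\varphi^{-1}(H_{k+1})=H_k$, so $\ker\varphi_{k+1}$ already dies at level $k$ although $\ker\varphi_k\neq 0$. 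Consequently your inductive construction of $(b_k)$ cannot always be corrected at the next stage as stated.

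The fix is available from what you have already proved: your injectivity computation establishes precisely the \emph{trivial Mittag--Leffler} property of the kernel system, namely that for each $k$ there is $m$ with $f_{k,m}(\ker\varphi_m)=0$. This gives $\varprojlim\ker\varphi_k=0$ \emph{and} $\varprojlim^1\ker\varphi_k=0$, so applying $\varprojlim$ to the exact sequences $0\to\ker\varphi_k\to A_k\to A'_k\to 0$ yields both injectivity of $\varphi$ on $R\bs H_1,\ell\js$ and the surjectivity you need, i.e.\ $N=\varprojlim_k\varphi_k(A_k)=\varphi\bigl(\varprojlim_k A_k\bigr)=\varphi(R\bs H_1,\ell\js)$. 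This is exactly how the paper argues (its Step 1 and the identity it labels \eqref{lim}); with that substitution in place of the snake-lemma step, the rest of your proof goes through.
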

\begin{proof}
\emph{Step 1.} Let $k$ be an integer and denote by $A_k$ the kernel of the map $\varphi\colon
R[H_1/H_k,\ell]\to R[H_1/H_k,\ell]$ so that we have a short exact sequence of
abelian groups
\begin{equation*}
0\longrightarrow A_k \longrightarrow R[H_1/H_k,\ell]
\overset{\varphi}{\longrightarrow} \varphi(R[H_1/H_k,\ell])\longrightarrow 0.
\end{equation*}
We are going to show that the sequence $A_k$ satisfies the trivial
Mittag-Leffler condition. From this the injectivity of $\varphi$ follows, and we obtain 
\begin{equation}
\varprojlim_{k}\varphi(R[H_1/H_k,\ell])\cong \varphi(\varprojlim_k
R[H_1/H_k,\ell])=\varphi(R\bs H_1,\ell\js).\label{lim}
\end{equation}
Take a fixed positive integer $k$. Since $\varphi\colon
H_1\rightarrow H_1$ is an open map (bijective and continuous between the compact sets $H_1$ and $\varphi(H_1)$ hence a homeomorphism) and the subgroups $H_l$ form a system of neighbourhoods we find an integer $l> k$ such that
$H_k\supseteq\varphi^{-1}(H_{l})$. In view of Lemma \ref{cosets} we put 
\begin{equation*}
J(H_1/H_l):=J(H_1/\varphi^{-1}(H_l))J(\varphi^{-1}(H_l)/H_l) 
\end{equation*}
for $J(H_1/\varphi^{-1}(H_l))$ and
$J(\varphi^{-1}(H_l)/H_l)$ arbitrarily fixed sets of representatives for
 the cosets of $H_1/\varphi^{-1}(H_l)$ and of $\varphi^{-1}(H_l)/H_l$, respectively.

Let now $\sum_{h\in J(H_1/H_{l})}r_h\chi_l(h)$ be an
element in $A_l$ and denote by $f_{k,l}$ the natural surjection from
$R[H_1/H_l,\ell]\twoheadrightarrow R[H_1/H_k,\ell]$. We have
\begin{align*}
0=\varphi\left(\sum_{h\in J(H_1/H_l)}r_h\chi_l(h)\right)=
\sum_{h_1\in
  J(H_1/\varphi^{-1}(H_l))}\sum_{h_2\in
  J(\varphi^{-1}(H_l)/H_l)}\varphi(r_{h_1h_2})\chi_l(\varphi(h_1h_2))=\\
=\sum_{h_1}\sum_{h_2}\varphi(r_{h_1h_2})\chi_l(\varphi(h_1))=\sum_{h_1\in  J(H_1/\varphi^{-1}(H_l))}\varphi\left(\sum_{h\in
J(H_1/H_l)\cap h_1\varphi^{-1}(H_l)}r_h\right)\chi_l(\varphi(h_1)).
\end{align*}
Note that for $h_1\neq h_1'\in J(H_1/\varphi^{-1}(H_l))$ we have
$\varphi(h_1)H_l\neq \varphi(h_1')H_l$. Since $R[H_1/H_l,\ell]$ is
 defined as a direct sum, we obtain 
\begin{equation*}
\varphi\left(\sum_{h\in J(H_1/H_l)\cap h_1\varphi^{-1}(H_l)}r_h\right)=0\
 \text{, whence}\ 
\sum_{h\in J(H_1/H_l)\cap h_1\varphi^{-1}(H_l)}r_h=0 
\end{equation*}
for any fixed $h_1\in J(H_1/\varphi^{-1}(H_l))$ as $\varphi$ is
 injective on $R$. On the other hand, we have
\begin{align*}
f_{k,l}\left(\sum_{h\in J(H_1/H_l)}r_h\chi_l(h)\right)
=\sum_{h_1\in J(H_1/H_k)}(\sum_{h\in
  J(H_1/H_l)\cap h_1H_k}r_h)\chi_k(h_1)=\sum_{h_1\in J(H_1/H_k)}0\chi_k(h_1)=0
\end{align*}
as $h_1H_k$ is a disjoint union of cosets of $\varphi^{-1}(H_l)$ by the choice
of $l$. This shows that $f_{k,l}(A_{l})=0$ as claimed. Therefore \eqref{lim} follows as discussed above.

\emph{Step 2.} Since $\varphi(H_0)\cap H_1$ is open in $H_1$, there exists an integer
$k_0\geq 2$ such that for $k\geq k_0$ we have $H_k\subseteq\varphi(H_0)$. (We
remark here that we may not be able to take $k_0=2$ because $H_k$ is the \emph{normal} subgroup
generated by $\varphi(H_1)$ which does have elements outside $\varphi(H_0)$ in
general.) We claim now the decomposition
\begin{equation}\label{flat}
R[H_1/H_k,\ell ]=\bigoplus_{h\in J(\varphi(H_0)\setminus H_0)}\varphi(R[H_1/H_k,\ell ])\chi_k(h)
\end{equation}
for $k\geq k_0$. Note that since $H_k$ is a normal subgroup of $H_0$
contained in $\varphi(H_0)$ the elements $\chi_k(h)$ above are distinct. 

For the proof of \eqref{flat} we apply Lemma \ref{cosets} $b)$ in the situation
$K:=\varphi(H_0)$, $N:=H_1$, and $H:=H_0$ to be able to choose
 $J(\varphi(H_0)\setminus\varphi(H_0)H_1):=J((\varphi(H_0)\cap
 H_1)\setminus H_1)$. Moreover, by the injectivity of $\varphi$ on
 $H_0/H_1$ we see that $\varphi(H_0)\cap H_1=\varphi(H_1)$. On the other
 hand, $\iota(\{0,1,\dots,p-1\})$ is a set of representatives for the
 cosets $H_1\varphi(H_0)\setminus H_0$. Therefore (using Lemma \ref{cosets} $a)$ with  $L:=\varphi(H_0)$,
 $K:=\varphi(H_0)H_1$, and $H:=H_0$) we may choose
\begin{equation*}
J(\varphi(H_0)\setminus H_0):=J(\varphi(H_0)\setminus
 \varphi(H_0)H_1)J(\varphi(H_0)H_1\setminus H_0)=J(\varphi(H_1)\setminus
 H_1)\iota(\{0,1,\dots,p-1\})\ .
\end{equation*}

We are going to use this specific set $J(\varphi(H_0)\setminus H_0)$ in
 order to compute the right hand side of \eqref{flat}. Let $\sum_{h\in
  J(H_1/H_k)}r_h\chi_k(h)$ be an arbitrary element in $R[H_1/H_k,\ell]$. By the
\'etaleness of the action of $\varphi$ on $R$ (noting that $R$ is
 commutative) we may uniquely decompose
\begin{equation*}
r_h=\sum_{i=0}^{p-1}\chi(i)\varphi(r_{i,h})=\sum_{i=0}^{p-1}\varphi(r_{i,h})\chi(i)\ .
\end{equation*}
On the other hand, we write $\iota(i)h\iota(i)^{-1}=\varphi(u_{i,h})v_{i,h}$ with unique $u_{i,h}\in H_1$
and $v_{i,h}\in J(\varphi(H_1)\setminus H_1)$. Therefore we have
\begin{align*}
\sum_{h\in J(H_1/H_k)}r_h\chi_k(h)=
\sum_{h\in
  J(H_1/H_k)}\sum_{i=0}^{p-1}\varphi(r_{i,h})\chi(i)\chi_k(\iota(i)^{-1}\varphi(u_{i,h})v_{i,h}\iota(i))=\\
=\sum_{h\in
  J(H_1/H_k)}\sum_{i=0}^{p-1}\varphi(r_{i,h}\chi_k(u_{i,h}))\chi_k(v_{i,h}\iota(i))\in
\sum_{h\in J(\varphi(H_0)\setminus
 H_0)}\varphi(R[H_1/H_k,\ell])\chi_k(h)
\end{align*}
as $\chi(i)=\chi_k(\iota(i))$ and
 $\chi_k\circ\varphi=\varphi\circ\chi_k$ by \eqref{chicomm}. 

It remains to show that the sum in \eqref{flat} is indeed direct. For this
we may expand any element $x_{i,h}\in R[H_1/H_k,\ell]$ as
\begin{equation*}
x_{i,h}=\sum_{m\in J(H_1/H_k)}r_{i,h,m}\chi_k(m)
\end{equation*}
and compute
\begin{align}
\sum_{i=0}^{p-1}\sum_{h\in J(\varphi(H_1)\setminus
  H_1)}\varphi(x_{i,h})\chi_k(h\iota(i))=\sum_{i,h,m}\varphi(r_{i,h,m})\chi_k(\varphi(m)h\iota(i))=\label{unique}\\
=\sum_{i,h,m}\chi(i)\varphi(r_{i,h,m})\chi_k(\iota(i)^{-1}\varphi(m)h\iota(i))=\notag\\
=\sum_{i,h}\sum_{m_0\in J(\varphi(H_1)/H_k)}\chi(i)\left(\sum_{m\in J(H_1/H_k)\cap\varphi^{-1}(m_0H_k)}\varphi(r_{i,h,m})\right)\chi_k(\iota(i)^{-1}m_0h\iota(i)).\notag
\end{align}
Assume now that the left hand side of \eqref{unique} is 0. The set
$J(\varphi(H_1)/H_k)J(\varphi(H_1)\setminus H_1)$ is a set of representatives
of $H_k\setminus H_1$ because $H_k$ is normal in $H_1$ whence
 $\varphi(H_1)/H_k=H_k\setminus \varphi(H_1)$. This shows that the
 elements $m_0h$ are distinct in $H_1/H_k$ on the right hand side of
 \eqref{unique}. Moreover, the conjugation by $\iota(i)$ is an
 automorphism of $H_1/H_k$ therefore the elements
 $\iota(i)^{-1}m_0h\iota(i)$ are also distinct for any fixed
 $i\in\{0,1,\dots,p-1\}$. On the other hand, by the \'etaleness of  
$\varphi$ on $R$ and by \eqref{defR[H_1/H_k]} we obtain
\begin{equation*}
R[H_1/H_k,\ell]=\bigoplus_{i=0}^{p-1}\bigoplus_{h_1\in
 H_1/H_k}\chi(i)\varphi(R)h\ .
\end{equation*}
Hence we have
\begin{equation*}
\sum_{m\in J(H_1/H_k)\cap\varphi^{-1}(m_0H_k)}\varphi(r_{i,h,m})=0
\end{equation*}
for any fixed $m_0$, $i$, and $h$. In particular, we also have
\begin{align*}
\varphi(x_{i,h})=\sum_{m\in
  J(H_1/H_k)}\varphi(r_{i,h,m})\chi_k(\varphi(m))=\\
=\sum_{m_0\in J(\varphi(H_1)/H_k)}\left(\sum_{m\in J(H_1/H_k)\cap\varphi^{-1}(m_0H_k)}\varphi(r_{i,h,m})\right)\chi_k(m_0)=0
\end{align*}
showing that the sum in \eqref{flat} is direct.

\emph{Step 3.} The result follows by taking the projective limit of \eqref{flat} using \eqref{lim}.
\end{proof}
\begin{rem}
The above lemma holds for replacing left and right, as well, ie.\ we also have
\begin{equation*}
R\bs H_1,\ell\js=\bigoplus_{h\in J(H_0/\varphi(H_0))}h\varphi(R\bs H_1,\ell\js).
\end{equation*}
\end{rem}

Let $S$ be a (not necessarily commutative) ring (with identity) with the following properties: 
\begin{enumerate}[$(i)$]
\item There exists a group homomorphism $\chi\colon H_0\hookrightarrow
  S^{\times}$.
\item The ring $S$ admits an \'etale action of the $p$-Frobenius $\varphi$
  that is compatible with $\chi$. More precisely there is an injective ring
  homomorphism $\varphi\colon S\hookrightarrow S$ such that
  $\varphi(\chi(x))=\chi(\varphi(x))$ and
\begin{equation*}
S=\bigoplus_{h\in \varphi(H_0)\setminus H_0}\varphi(S)\chi(h)=\bigoplus_{h\in H_0/\varphi(H_0)}\chi(h)\varphi(S)\ .
\end{equation*}
In particular, $S$ is free of rank $|H_0:\varphi(H_0)|$ as a left, as well as a right module over $\varphi(S)$.
\end{enumerate}
\begin{df}\label{phiringH_0}
We call a ring $S$ with the above properties $(i)$ and $(ii)$ a $\varphi$-ring
over $H_0$. 
\end{df}
\begin{cor}
The map $R\mapsto R\bs N_1,\ell\js$ is a functor from the category of $\varphi$-rings over $\mathbb{Z}_p$ to the category of $\varphi$-rings over $H_0$.
\end{cor}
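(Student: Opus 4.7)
The plan is to unpack the two conditions of Definition \ref{phiringH_0} for $S := R\bs H_1,\ell\js$, and then verify functoriality. Nearly all of the substantive work has already been done; the corollary is essentially a bookkeeping statement assembling prior results.

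For condition $(i)$, I would take the group homomorphism $\chi_{H_0}\colon H_0\to S^\times$ that was already constructed via \eqref{chicomm} as the inverse limit of the maps $\chi_k\colon H_0\to R[H_1/H_k,\ell]^\times$. Injectivity is the one point that still needs a comment: since $\ker(\chi_k)=H_k$ and the $H_k$ form a system of neighbourhoods of $1$ in $H_0$ by assumption $(B)$, we have $\bigcap_k H_k=\{1\}$, so $\chi_{H_0}$ is injective on $H_0$. For condition $(ii)$, the compatibility $\varphi\circ\chi_{H_0}=\chi_{H_0}\circ\varphi$ was checked at each finite level right after \eqref{chicomm}, hence passes to the inverse limit. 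The étale decompositions
\[
R\bs H_1,\ell\js=\bigoplus_{h\in J(\varphi(H_0)\setminus H_0)}\varphi(R\bs H_1,\ell\js)\chi_{H_0}(h)=\bigoplus_{h\in J(H_0/\varphi(H_0))}\chi_{H_0}(h)\varphi(R\bs H_1,\ell\js)
\]
are exactly the content of the Proposition and the Remark immediately preceding the corollary.

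For functoriality, let $f\colon R\to R'$ be a morphism of $\varphi$-rings over $\mathbb{Z}_p$, i.e.\ a ring homomorphism with $f\circ\chi=\chi'\circ\id$ and $f\circ\varphi=\varphi\circ f$. The key observation is that in the construction of $R[H_1/H_k,\ell]$, the integer $c_k$ depends only on the finite $p$-group $H_1/H_k$ together with the conjugation action of $\iota(\mathbb{Z}_p)$, not on $R$; hence $c_k$ is the same for $R$ and $R'$, and one may apply $f$ coefficientwise to elements $\sum_{h\in H_1/H_k} r_h h$ to obtain a map $f_k\colon R[H_1/H_k,\ell]\to R'[H_1/H_k,\ell]$. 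Inspecting the multiplication formula \eqref{mult} and the formula $\varphi(rh)=\varphi(r)\varphi(h)$ shows that $f_k$ is a ring homomorphism commuting with $\varphi$ and with the structure maps $\chi_k$, simply because $f$ commutes with $\chi$ and $\varphi$. The $f_k$ are compatible with the transition maps $R[H_1/H_{k+1},\ell]\twoheadrightarrow R[H_1/H_k,\ell]$, and taking the inverse limit yields the required morphism $f_*\colon R\bs H_1,\ell\js\to R'\bs H_1,\ell\js$ of $\varphi$-rings over $H_0$.

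There is no real obstacle. The only point that requires a moment of attention is the one highlighted above, namely that $c_k$ is intrinsic to $H_1/H_k$ (not to $R$), which is what makes $f_k$ well-defined as a ring map rather than merely an additive map; once this is noted, compatibility with composition and identities is immediate from the corresponding property of each $f_k$.
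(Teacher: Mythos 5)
Your proposal is correct and follows essentially the same route the paper intends: the corollary is a direct assembly of the preceding Proposition and Remark (the étale decomposition over $\varphi$) together with the already-constructed homomorphism $\chi_{H_0}$, and the paper gives no separate proof. Your explicit verification of functoriality on morphisms (noting that $c_k$ is intrinsic to $H_1/H_k$, so $f$ applied coefficientwise respects \eqref{mult} and commutes with $\varphi$ and the transition maps) is exactly the routine check the paper leaves to the reader.
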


\begin{rem}\label{pronilp}
We have $\varphi(I_k)\subseteq I_{k+1}$ for all $k\geq 1$.
\end{rem}
\begin{proof}
Take $x\in I_k$ and write $x+I_{k+1}\in R[H_1/H_{k+1},\ell]$ as
 $x+I_{k+1}=\sum_{h\in J(H_1/H_{k+1})}r_h\chi_{k+1}(h)$. Since $x\in
 I_k$ we have 
\begin{equation*}
0=\sum_{h\in J(H_1/H_{k+1})}r_h\chi_{k}(h)=\sum_{h_1\in
 J(H_1/H_k)}\sum_{h\in J(H_1/H_{k+1})\cap h_1H_k}r_h\chi_k(h_1)
\end{equation*}
hence $\sum_{h\in J(H_1/H_{k+1})\cap h_1H_k}r_h=0$ for any fixed $h_1\in
 J(H_1/H_k)$. So we compute
\begin{align*}
\varphi(x)+I_{k+1}=\sum_{h_1\in
 J(H_1/H_k)}\sum_{h\in J(H_1/H_{k+1})\cap
 h_1H_k}\varphi(r_h)\varphi(\chi_k(h))=\\ 
=\sum_{h_1\in
 J(H_1/H_k)}\sum_{h\in J(H_1/H_{k+1})\cap
 h_1H_k}\varphi(r_h)\chi_k(\varphi(h_1))=\sum_{h_1\in
 J(H_1/H_k)}0\chi_k(\varphi(h_1))=0
\end{align*}
since $\varphi(H_k)\subseteq H_{k+1}$ whence
 $\varphi(h_1)=\varphi(h)$ above.
\end{proof}

Recall that Fontaine's ring $\mathcal{O_E}:=\varprojlim_{n}(o\bs
T\js[T^{-1}])/p_K^n$ is defined as the $p$-adic completion of the ring of
formal Laurent-series over $o$. It is a complete discrete valuation ring with
maximal ideal $p_K\mathcal{O_E}$, residue field $k\bg T\jg$, and field of
fractions $\mathcal{E}=\mathcal{O_E}[p_K^{-1}]$. We are going to show
that the completed skew group ring
$\mathcal{O_E}\bs H_1,\ell\js$ is isomorphic to the previously constructed
(\cite{SVe}, see also section 8 of \cite{SVi}, \cite{SVZ}, \cite{Z})
microlocalized ring $\Lambda_\ell(H_0)$ of the Iwasawa algebra
$\Lambda(H_0)$. ($H_0=N_0$ in the notations of \cite{SVi}, \cite{SVZ}, and
\cite{Z}.) For the convenience of the reader we recall the definition 
here. Let $\Lambda(H_0):=o\bs H_0\js$ be the Iwasawa algebra of the pro-$p$
group $H_0$. It is shown in \cite{CFKSV} that
$S:=\Lambda(H_0)\setminus(p_K,H_1-1)$ is a left and right Ore set in
$\Lambda(H_0)$ so that the localization $\Lambda(H_0)_S$ exists. The ring
$\Lambda_\ell(H_0)$ is defined as the $(p_K,H_1-1)$-adic completion of
$\Lambda(H_0)_S$ (the so called ``microlocalization''). Note that since $\varphi\colon H_0\to H_0$ is a continuous group homomorphism, it induces a continuous ring homomorphism $\varphi\colon \Lambda(H_0)\to\Lambda(H_0)$ of the Iwasawa algebra. Moreover, since $\varphi(S)\subset S$, $\varphi$ extends to a ring homomorphism $\varphi\colon\Lambda(H_0)_S\to\Lambda(H_0)_S$ and by continuity to its completion $\Lambda_\ell(H_0)$ (see section 8 of \cite{SVi} for more details).

\begin{rem}\label{remark}
Let $R$ be a $\varphi$-ring containing (as a $\varphi$-subring) the Iwasawa
algebra $o\bs T\js\cong\Lambda(\mathbb{Z}_p)$. Then using \eqref{iterate} we compute
\begin{align*}
R[H_1/H_k,\ell ] \cong (R\otimes_{\Lambda(\mathbb{Z}_p)}\Lambda(\mathbb{Z}_p))[H_1/H_k,\ell]\cong
R\otimes_{\Lambda(\mathbb{Z}_p)}(\Lambda(\mathbb{Z}_p)[H_1/H_k,\ell])\\
\cong R\otimes_{\Lambda(\mathbb{Z}_p),\iota}\Lambda(H_0/H_k)\cong
(\varphi^{c_k}(R)\otimes_{\varphi^{c_k}(\Lambda(\mathbb{Z}_p))}\Lambda(\mathbb{Z}_p))\otimes_{\Lambda(\mathbb{Z}_p),\iota}\Lambda(H_0/H_k)\\
\cong\varphi^{c_k}(R)\otimes_{\varphi^{c_k}(\Lambda(\mathbb{Z}_p)),\iota}\Lambda(H_0/H_k)
\end{align*}
for any $k\geq 1$.
\end{rem}

\begin{lem}\label{isom}
We have a $\varphi$-equivariant ring-isomorphism $\mathcal{O_E}\bs
H_1,\ell\js\cong\Lambda_\ell(H_0)$. 
\end{lem}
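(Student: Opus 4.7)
The plan is to construct a natural $\varphi$-equivariant ring homomorphism $\Psi\colon\Lambda_\ell(H_0)\to\mathcal{O_E}\bs H_1,\ell\js$ and then verify it is bijective by identifying both sides with compatible projective limits of the rings appearing in Remark \ref{remark}. The starting point is the group homomorphism $\chi_{H_0}\colon H_0\to\mathcal{O_E}\bs H_1,\ell\js^{\times}$ constructed before the lemma, which by the universal property of $\Lambda(H_0)=o\bs H_0\js$ extends uniquely to a continuous, $\varphi$-equivariant $o$-algebra homomorphism $\Lambda(H_0)\to\mathcal{O_E}\bs H_1,\ell\js$.

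To factor this through the microlocalization I would first show that the Ore set $S=\Lambda(H_0)\setminus(p_K,H_1-1)$ lands in the units. Composing $\ell_R$ with reduction mod $p_K$ yields a surjection $\mathcal{O_E}\bs H_1,\ell\js\twoheadrightarrow k\bg T\jg$ whose kernel is precisely the image of the maximal ideal $(p_K,H_1-1)\lhd\Lambda(H_0)$; since the target is a field, every $s\in S$ has unit image. The target $\mathcal{O_E}\bs H_1,\ell\js=\varprojlim_k\mathcal{O_E}[H_1/H_k,\ell]$ is complete in the topology generated by $p_K^n+I_k$, and each factor $\mathcal{O_E}[H_1/H_k,\ell]$ is free of finite rank over $\mathcal{O_E}$, hence $p_K$-adically complete. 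A standard geometric-series argument then lifts the unit mod $(p_K,I_1)$ to a unit of $\mathcal{O_E}\bs H_1,\ell\js$. Consequently the homomorphism extends to $\Lambda(H_0)_S$ and, by continuity, to its $(p_K,H_1-1)$-adic completion $\Lambda_\ell(H_0)$, producing $\Psi$.

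For bijectivity I would descend $\Psi$ to each level $k$. On the source side, quotienting by the closed two-sided ideal $\overline{(H_k-1)\Lambda_\ell(H_0)}$ (two-sided because $H_k\lhd H_0$) reduces the microlocalization to the analogous construction on $\Lambda(H_0/H_k)$; since $H_1/H_k$ is finite and $\Lambda(H_0/H_k)$ is free of rank $|H_1/H_k|$ as a module over $\Lambda(\mathbb{Z}_p)\cong o\bs T\js$ via $\iota$, inverting the image of $S$ and completing $p_K$-adically just produces $\mathcal{O_E}\otimes_{\Lambda(\mathbb{Z}_p),\iota}\Lambda(H_0/H_k)$. On the target side, Remark \ref{remark} applied to $R=\mathcal{O_E}$ (and absorbing the twist by $\varphi^{c_k}$ via its own bijectivity) identifies $\mathcal{O_E}[H_1/H_k,\ell]$ with the same tensor product. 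So $\Psi$ induces an isomorphism at every level $k$; passing to $\varprojlim_k$ and invoking the definition of $\Lambda_\ell(H_0)$ as a completion gives the desired isomorphism. The $\varphi$-equivariance is inherited from every step of the construction.

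The main obstacle I expect is the bookkeeping of the two filtrations, in two directions. First, one must verify that the chain $\overline{(H_k-1)\Lambda_\ell(H_0)}$ is cofinal with the defining $(p_K,H_1-1)$-adic filtration, so that the projective limit on the source side really recovers $\Lambda_\ell(H_0)$. Second, on the target side one must check that the $p_K^n+I_k$-topology dominates the image of the $(p_K,H_1-1)$-adic topology, which is what makes the geometric-series lift of units legal and is exactly what guarantees continuity of the extended $\Psi$. Once this compatibility is settled, the level-by-level identification via Remark \ref{remark} carries the rest of the proof essentially formally.
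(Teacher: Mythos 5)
Your overall architecture is essentially the paper's: reduce to finite level $k$, identify both sides with the tensor product of Remark \ref{remark}, and pass to the limit; the extra idea of producing an explicit map $\Psi$ out of $\Lambda_\ell(H_0)$ via $\chi_{H_0}$ and the universal property of microlocalization is fine (the unit-lifting works because the augmentation ideal of the finite $p$-group $H_1/H_k$ is nilpotent modulo $p_K$ at each finite level). However, two of your steps have genuine problems. The ``main obstacle'' you propose to settle by cofinality cannot be settled that way: the filtration by the closed ideals $\overline{(H_k-1)\Lambda_\ell(H_0)}$ is \emph{not} cofinal with the $(p_K,H_1-1)$-adic filtration, because no power of $p_K$ lies in $\overline{(H_k-1)\Lambda_\ell(H_0)}$ (its image under the natural map $\Lambda_\ell(H_0)\to\Lambda_\ell(H_0/H_1)\cong\mathcal{O_E}$, which kills $H_k-1$, is $p_K^n\neq 0$). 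The statement you actually need --- that $\Lambda_\ell(H_0)$ is complete and Hausdorff for the $(H_k-1)$-filtration, so that $\Lambda_\ell(H_0)\cong\varprojlim_k\Lambda_\ell(H_0)/\overline{(H_k-1)}$ --- is true, but it is proved by noting that these ideals are closed with zero intersection in the \emph{pseudocompact} ring $\Lambda_\ell(H_0)$ (Thm.\ 4.7 of \cite{SVe}, which is exactly what the paper cites), not by comparing the two filtrations; as written, your limit step would fail.

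Second, at finite level you assert that inverting the image of $S$ in $\Lambda(H_0/H_k)$ and completing $p_K$-adically ``just produces'' $\mathcal{O_E}\otimes_{\Lambda(\mathbb{Z}_p),\iota}\Lambda(H_0/H_k)$. This glosses over the actual point: $\iota(T)$ is \emph{not} central in $\Lambda(H_0/H_k)$ (only $\iota(\varphi^{c_k}(\Lambda(\mathbb{Z}_p)))$ is), so one cannot naively localize at $T$ or identify the microlocalization with a base change along $\iota$. The paper's proof is built around the fix: the central element $\varphi^{c_k}(T)$ can be inverted, giving $\varphi^{c_k}(\mathcal{O_E})\otimes_{\varphi^{c_k}(\Lambda(\mathbb{Z}_p)),\iota}\Lambda(H_0/H_k)$, and then one observes that in a $p$-adically complete ring $T$ is invertible if and only if $\varphi^{c_k}(T)$ is, because $\varphi^{c_k}(T)\in T^{p^{c_k}}(1+p\,o\bs T\js[T^{-1}])$; hence both candidate rings are the microlocalization of $\Lambda(H_0/H_k)$ at $T$. (Also beware that $\varphi^{c_k}$ is not bijective on $\mathcal{O_E}$, only an isomorphism onto its image, so ``absorbing the twist'' has to be done as in Remark \ref{remark}.) With the pseudocompactness input from \cite{SVe} and this divisibility trick your argument goes through, but both are needed and are missing from the proposal.
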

\begin{proof}
The ring $\Lambda_\ell(H_0)$ is complete and Hausdorff with respect to the
filtration by the ideals generated by $(H_k-1)$ since these ideals are
closed with intersection zero in the pseudocompact ring $\Lambda_\ell(H_0)$
(cf.\ Thm.\ 4.7 in \cite{SVe}). So it remains to show that
$\Lambda_\ell(H_0/H_k)$ is naturally isomorphic to the skew group ring
$\mathcal{O_E}[H_1/H_k,\ell]$. At first we show that
 $\Lambda(H_0/H_k)\cong\Lambda(\mathbb{Z}_p)[H_1/H_k,\ell]$. Both sides
 are free modules of rank $|H_1/H_k|$ over $\Lambda(\mathbb{Z}_p)$ with
 generators $h\in H_1/H_k$ so there is an obvious isomorphism between
 them as $\Lambda(\mathbb{Z}_p)$-modules. Moreover,
 $\varphi^{c_k}(\Lambda(\mathbb{Z}_p))$ lies in the centre of both
 rings. However, the obvious map above is also 
 multiplicative since the multiplication on
 $\Lambda(\mathbb{Z}_p)[H_1/H_k,\ell]$ is uniquely determined by
 \eqref{mult} so that \eqref{chicomm} is satisfied and
 $\varphi^{c_k}(\Lambda(\mathbb{Z}_p))$ lies in the centre. 

Now by Remark \ref{remark} we have
\begin{align*}
\mathcal{O_E}[H_1/H_k,\ell ]\cong\varphi^{c_k}(\mathcal{O_E})\otimes_{\varphi^{c_k}(\Lambda(\mathbb{Z}_p)),\iota}\Lambda(H_0/H_k)
\end{align*}
for any $k\geq 1$.

Since $\iota(\varphi^{p^{c_k}}(\Lambda(\mathbb{Z}_p)))$ lies in the
 centre of $\Lambda(H_0/H_k)$, the right hand side above is the
 localisation of $\Lambda(H_0/H_k)$ inverting the central element
 $\varphi^{p^{c_k}}(T)$ and taking the $p$-adic completion
 afterwards (ie.\ ``microlocalisation'' at $\varphi^{p^{c_k}}(T)$). However, in a $p$-adically complete ring $T$ is invertible
 if and only if so is $\varphi^{p^{c_k}}(T)$. Indeed, we have
\begin{equation*} 
T\mid\varphi^{p^{c_k}}(T)=(T+1)^{p^{c_k}}-1=\sum_{i=1}^{p^{c_k}}\binom{p^{c_k}}{i}T^i\in
 T^{p^{c_k}}(1+po\bs T\js[T^{-1}])\ .
\end{equation*}
Hence we obtain
\begin{equation*}
\varphi^{c_k}(\mathcal{O_E})\otimes_{\varphi^{c_k}(\Lambda(\mathbb{Z}_p)),\iota}\Lambda(H_0/H_k)\cong\Lambda_\ell(H_0/H_k)
\end{equation*}
as both sides are the microlocalisation of $\Lambda(H_0/H_k)$ at $T$.
\end{proof}

\section{Equivalence of categories}\label{equiv}

Let $S$ be a $\varphi$-ring over any pro-$p$ group $H_0$ satisfying $(A)$, $(B)$, and $(C)$ (for now it would suffice to assume that $S$ has an injective ring-endomorphism $\varphi\colon S\to S$). We define a $\varphi$-module over $S$ to be a free $S$-module $D$ of finite rank together with a semilinear action of $\varphi$ such that the map
\begin{eqnarray}
1\otimes\varphi\colon S\otimes_{S,\varphi}D&\rightarrow& D\notag\\
r\otimes d&\mapsto& r\varphi(d)\label{etaledef}
\end{eqnarray}
is an isomorphism. Note that for rings $S$ in which $p$ is not invertible
(such as $S=\mathcal{O_E}$ and $\mathcal{O}^\dagger_{\mathcal{E}}$) this is the definition of an
\emph{\'etale} $\varphi$-module. However, for rings in which $p$ is invertible
(such as the Robba ring $\mathcal{R}$) this is the usual definition of a
$\varphi$-module. We use this definition for both $S=R$ and $S=R\bs H_1,\ell\js$---the former being a $\varphi$-ring over $\mathbb{Z}_p$ and the latter being a $\varphi$-ring over $H_0$. We denote the category of $\varphi$-modules over $R$ (resp.\ over $R\bs H_1,\ell\js$) by $\mathfrak{M}(R,\varphi)$ (resp.\ by $\mathfrak{M}(R\bs H_1,\ell\js,\varphi)$). These are clearly additive categories. However, they are not abelian in general, as the kernel and cokernel might not be a free module over $R$, resp.\ over $R\bs H_1,\ell\js$.

Note that for modules $M$ over $R\bs H_1,\ell\js$ \eqref{etaledef} (with $D=M$) is equivalent to saying that each element $m\in
M$ is uniquely decomposed as
\begin{equation*}
m=\sum_{u\in J(H_0/\varphi^k(H_0))}u \varphi^{k}(m_{u,k})
\end{equation*}
for $k=1$, or equivalently, for all $k\geq 1$.

There is an obvious functor in both directions induced by $\ell_R$ and $\iota_R$
that we denote by
\begin{eqnarray*}
\mathbb{D}:=R\otimes_{R\bs H_1,\ell\js,\ell}\cdot\colon&\mathfrak{M}(R\bs
H_1,\ell\js,\varphi)&\rightarrow\mathfrak{M}(R,\varphi)\\
\mathbb{M}:=R\bs
H_1,\ell\js\otimes_{R,\iota}\cdot\colon&\mathfrak{M}(R,\varphi)&\rightarrow\mathfrak{M}(R\bs
H_1,\ell\js,\varphi)\ .
\end{eqnarray*}

The following is a generalization of Thm.\ 8.20 in \cite{SVZ}. The proof is also
similar, but we include it here for the convenience of the reader.

\begin{pro}\label{equivcat}
The functors $\mathbb{D}$ and $\mathbb{M}$ are quasi-inverse equivalences of categories.
\end{pro}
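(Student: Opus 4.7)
The plan is to construct natural isomorphisms in both directions and verify them using the étale decomposition $S=\bigoplus_{u\in J(H_0/\varphi(H_0))}u\varphi(S)$ from the preceding Proposition together with the topological nilpotency $\varphi(I_k)\subseteq I_{k+1}$ from Remark \ref{pronilp}, where $S:=R\bs H_1,\ell\js$.

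First I would verify that both functors are well-defined on $\varphi$-modules. For $\mathbb{M}(D)=S\otimes_{R,\iota}D$, the étaleness condition $\mathbb{M}(D)=\bigoplus_u u\varphi(\mathbb{M}(D))$ follows by combining the decomposition of $S$ over $\varphi(S)$ with the étaleness of $D$; freeness of finite rank is immediate. For $\mathbb{D}(M)=R\otimes_{S,\ell}M$, the identity $\ell\circ\varphi_S=\varphi_R\circ\ell$ ensures that $\varphi$ descends, and étaleness of $\mathbb{D}(M)$ follows from étaleness of $M$. The easy direction $\mathbb{D}\mathbb{M}(D)\cong D$ is then handled by the natural map $r\otimes s\otimes d\mapsto r\ell(s)d$, which is well-defined, $R$-linear, $\varphi$-equivariant, and admits the two-sided inverse $d\mapsto 1\otimes 1\otimes d$ thanks to $\ell\circ\iota=\id_R$.

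The reverse direction $\mathbb{M}\mathbb{D}(M)\cong M$ is the main obstacle. The naive candidate $s\otimes(r\otimes m)\mapsto s\iota(r)m$ is \emph{not} well-defined, since the relation $1\otimes(1\otimes hm)=1\otimes(1\otimes m)$ in $\mathbb{M}\mathbb{D}(M)$ (for $h\in H_1$, using $\ell(h)=1$) does not lift to $hm=m$ in $M$. Instead I would construct a canonical $R$-linear, $\varphi$-equivariant section $\sigma\colon \mathbb{D}(M)=M/JM\hookrightarrow M$ of the quotient (where $J:=\ker\ell_R=I_1$) and extend it to $\tilde\sigma\colon \mathbb{M}\mathbb{D}(M)\to M$, $s\otimes\bar m\mapsto s\sigma(\bar m)$. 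Existence of $\sigma$ follows by a Newton/Hensel-style iteration: fixing an $S$-basis $\{e_i\}$ of $M$ with $\varphi$-matrix $A\in M_n(S)$, the demand that $\sigma(\bar e_i)=e_i+f_i$ be $\varphi$-equivariant yields the fixed-point equation $\vec f=\iota\ell(A)^{-1}(\varphi(\vec f)+(A-\iota\ell(A))\vec e)$ for $\vec f\in(JM)^n$, which converges in the $I_k$-adic topology because $\varphi(I_k)\subseteq I_{k+1}$ and $M$ is complete over $S$. Uniqueness of $\sigma$ (hence its naturality in $M$) follows because the difference of two such sections is an $R$-linear, $\varphi$-equivariant map with image in $JM$, which must vanish after invoking the iterated étale identity $\mathbb{D}(M)\cong R\otimes_{R,\varphi^n}\mathbb{D}(M)$ and using $\bigcap_n I_nM=0$. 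Bijectivity of $\tilde\sigma$ is then obtained by the same bootstrap: modulo $J$ it reduces to the identity on $\mathbb{D}(M)$ (via the easy direction applied to $\mathbb{D}\mathbb{M}\mathbb{D}(M)$), and a $\varphi$-equivariant $S$-linear map between étale $\varphi$-modules which is an isomorphism modulo $J$ must itself be an isomorphism by $\varphi$-iteration combined with $I_k$-adic completeness. The hardest step is the construction of $\sigma$ together with the Nakayama-style lifting argument that upgrades isomorphy mod $J$ to isomorphy over $S$; once $\sigma$ is in hand, everything else is formal.
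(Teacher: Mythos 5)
Most of your construction is the paper's own argument in different packaging: your $\varphi$-equivariant section $\sigma(\bar e_i)=e_i+f_i$ with $f_i\in I_1M$ is exactly the paper's change of basis $\eta_i=\epsilon_i+\sum_j x_{i,j}\epsilon_j$, and after writing $\vec f=X\vec e$ and splitting the matrix of $\varphi$ on $M$ as $\iota\ell(A)+B$, your fixed-point equation becomes literally the paper's $X=A^{-1}B+A^{-1}\varphi(X)(A+B)$, solved by the same series, convergent because $\varphi(I_k)\subseteq I_{k+1}$ and $R\bs H_1,\ell\js=\varprojlim_k R\bs H_1,\ell\js/I_k$; your uniqueness-of-$\sigma$ argument is the paper's faithfulness argument. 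So the issue is not the route but one step.

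The genuine gap is the bijectivity of $\tilde\sigma$. Your stated justification --- that a $\varphi$-equivariant $S$-linear map between \'etale modules which is an isomorphism modulo $J=I_1$ ``must itself be an isomorphism by $\varphi$-iteration combined with $I_k$-adic completeness'' (a ``Nakayama-style lifting'') --- works for injectivity (the kernel is stable under taking $\varphi$-components and lies in $I_1$ times the module, hence in $\bigcap_k I_k\cdot(\cdot)=0$), but for surjectivity the iteration only yields $M=\tilde\sigma(\mathbb{M}\mathbb{D}(M))+I_kM$ for every $k$, i.e.\ density of the image: the elements you subtract at each stage are not controlled, so the candidate preimage series need not converge, and you cannot invert $\id+X$ by a geometric series or invoke Nakayama because $R\bs H_1,\ell\js$ is \emph{not} $I_1$-adically complete in general --- the ideals $I_k$ are cofinal with the powers $I_1^k$ only when $R$ is killed by a power of $p$, which fails precisely in the cases of interest such as $R=\mathcal{R}$ (the paper makes this point in a remark following the proposition). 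The way to close the gap --- and what the paper does --- is to run your fixed-point construction a second time in the opposite direction, solving $(A+B)(\id+Y)=\varphi(\id+Y)A$ for $Y$ with entries in $I_1$ by the same convergent series, and then apply the rigidity you already proved (two $\varphi$-equivariant maps agreeing modulo $I_1$ agree) to the composites to get $(\id+Y)(\id+X)=\id=(\id+X)(\id+Y)$. With that supplement your proof is complete and coincides with the paper's.
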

\begin{proof}
We first note that since $\ell\circ\iota=\id_{R}$ we also have
$\mathbb{D}\circ\mathbb{M}\cong\id_{\mathfrak{M}(R,\varphi)}$. So it remains
to show that $\mathbb{D}$ is full and faithful.

For the faithfulness of $\mathbb{D}$ let $f\colon M_{1}\to M_{2}$ be a
morphism in  ${\mathfrak M}(R\bs H_1,\ell\js,\varphi)$ such that
$\mathbb{D}(f) =0 $ which means that $f(M_1)\subseteq I_1M_2$.
Let $m\in M_{1}$. For any $k\in \mathbb N$ we write $m=\sum_{u\in J(H_0/\varphi^k(H_0))}u \varphi^{k}(m_{u,k})$ and
\begin{equation*}
f(m)=\sum_{u\in J(H_0/\varphi^k(H_0))}u \varphi^{k}f(m_{u,k})\in
\varphi^{k}(I_1M_{2})\subseteq I_{k+1}M_2
\end{equation*}
by Remark \ref{pronilp}. Therefore $f(M_1)\subseteq I_{k+1}M_2$ for any $k\geq0$ and therefore $f=0$
since $M_2$ is a finitely generated free module over $R\bs H_1,\ell\js$ and
$\bigcap_{k\geq 0}I_{k+1}=0$ since $R\bs H_1,\ell\js\cong\varprojlim R\bs H_1,\ell\js/I_k$.

Now we prove that for any object $M$ in $\mathfrak{M}(R\bs
H_1,\ell\js,\varphi)$ we have an isomorphism
$\mathbb{M}\circ\mathbb{D}(M)\rightarrow M$. We start with an arbitrary finite
$R\bs H_1,\ell\js$-basis $(\epsilon_{i})_{1\leq i\leq d}$ of $M$ (where $d$ is
the rank of $M$). As $R$-modules we have
\begin{equation*}
 M=(\bigoplus_{1\leq i\leq d} \iota (R)\epsilon_{i})\oplus
 (\bigoplus_{1\leq i\leq d} I_1\epsilon_{i}) \ .
\end{equation*}
 It is clear that the $R\bs H_1,\ell\js$-linear map from $M$ to
 $\mathbb{M}(\mathbb{D}(M))$ sending $\epsilon_{i}$ to $1\otimes (1\otimes
 \epsilon_{i})$ is bijective. It is $\varphi$-equivariant if and only if $\bigoplus_{1\leq
   i\leq d} \iota (R)\epsilon_{i}$ is $\varphi$-stable which is, of course,
 not true in general. We always have
\begin{equation*}      
\varphi(\epsilon_{i})=\sum_{1\leq j\leq d}(a_{i,j} +b_{i,j}) \epsilon_{j} \ \ {\rm where } \ a_{i,j}
\in \iota (R) \ , \ b_{i,j}\in I_1  \ .
\end{equation*}
   If  the $b_{i,j}$ are not all $0$,
   we will find elements $ x_{i,j}\in I_1$  such that
\begin{equation*}  
\eta_{i} := \epsilon_{i}     +\sum_{1\leq j\leq d}  x_{i,j} \epsilon_{j}
\end{equation*}
  satisfies
\begin{equation*}
\varphi (\eta_{i}) = \sum_{1\leq j\leq d}a_{i,j}\eta_{j} \ \ {\rm for } \ i\in I  \ .
\end{equation*}
The conditions on the matrix    $X:=(x_{i,j})_{1\leq i,j\leq d}$  are :
\begin{equation*}    
\varphi (\id+X) (A+B) = A (\id +X) \   
\end{equation*}
   for the matrices $A:=(a_{i,j})_{1\leq i,j\leq d} \ , B:=(b_{i,j})_{1\leq
     i,j\leq d}$. The coefficients  of  $A$ belong to the commutative ring
   $\iota (R) $. The matrix $A+B$ is invertible because the 
 $R\bs H_1,\ell\js$-endomorphism $f$ of $M$ defined by
\begin{equation*}
  f(\epsilon_{i})=\varphi (\epsilon_{i})  \ \ {\rm for } \ 1\leq i\leq d \ ,
\end{equation*}
   is an automorphism of $M$ as $M$ lies in $\mathfrak{M}(R\bs H_1,\ell\js,\varphi)$. Therefore the matrix $A=\ell(A+B)$ is also invertible. We are reduced to solve the equation
\begin{equation*}
  A^{-1}B+A^{-1} \varphi (X) (A+B ) =    X
\end{equation*}
 in the indeterminate $X$.
We are looking for the solution $X$ in the form of an infinite sum
\begin{equation*}
X= A^{-1}B +  \ldots + (A^{-1}\varphi  (A^{-1})\ldots   \varphi ^{k-1}(A^{-1})
\varphi ^{k}(A^{-1}B)  \varphi ^{k-1}(A+B) \ldots  \varphi  (A+B)(A+ B) )+
\ldots  \ .
\end{equation*}
 The coefficients of $A^{-1}B $ belong to the two-sided ideal $I_1$ of $R\bs H_1,\ell\js$ and the coefficients of the $k$-th term of the series
\begin{equation*}
(A^{-1}\varphi  (A^{-1})\ldots   \varphi ^{k-1}(A^{-1}) \varphi ^{k}(A^{-1}B)
  \varphi ^{k-1}(A+B) \ldots  \varphi  (A+B)(A+ B) ) 
\end{equation*}
belong to  $\varphi^{k}(I_1)\subseteq I_{k+1}$.  Hence the series converges
since $R\bs H_1,\ell\js\cong\varprojlim_k R\bs H_1,\ell\js/I_k$. Its limit $X$
is the unique solution of the equation. The coefficients of every term in the
series belong to $I_1$ and  $I_1$ is closed in $R\bs H_1,\ell\js$, hence
$x_{i,j}\in I_1$ for $1\leq i,j\leq d$. 

We still need to show that the set $(\eta_{i})_{1\leq i\leq d}$ is an $R\bs
H_1,\ell\js$-basis of $M$. Similarly to the above equation we may find a
matrix $Y$ with coefficients in $I_1$ such that we have
\begin{equation*}
(A+B)(\id+Y)=\varphi(\id+Y)A.
\end{equation*}
Therefore we obtain
\begin{equation*}
(A+B)(\id+Y)(\id+X)=\varphi\left((\id+Y)(\id+X)\right)(A+B)
\end{equation*}
which means that the map
\begin{eqnarray*}
(\id+Y)(\id+X)\colon M&\rightarrow&M\\
\epsilon_i&\mapsto& (\id+Y)(\id+X)\epsilon_i
\end{eqnarray*}
is a $\varphi$-equivariant map such that $\mathbb{D}((\id+Y)(\id+X))=\id$,
hence $(\id+Y)(\id+X)=\id$ by the faithfulness of $\mathbb{D}$. By a similar
computation we also obtain
$A(\id+X)(\id+Y)=\varphi\left((\id+X)(\id+Y)\right)A$ showing that
$(\id+X)(\id+Y)$ is a $\varphi$-equivariant endomorphism of
$\mathbb{M}\circ\mathbb{D}(M)$ reducing to the identity modulo $I_1$. Hence
$(\id+Y)$ is a twosided inverse to the map $(\id+X)$, in particular
$(\eta_i)_{1\leq i\leq d}$ is an $R\bs H_1,\ell\js$-basis of $M$. So we obtain
an isomorphism in $\mathfrak{M}(R\bs H_1,\ell\js,\varphi)$, 
\begin{equation*}
\Theta \ : \ M\to\mathbb  \mathbb{M}(\mathbb{D}(M)) \  \ , \ \ \Theta
(\eta_{i})=1\otimes (1 \otimes \eta_{i}) \ \ \hbox{for} \ 1\leq i\leq d \ ,
\end{equation*}
such that $\mathbb{D}(\Theta)$ is the identity morphism of $\mathbb{D}(M)$.

Now if $f\colon \mathbb{D}(M_1)\rightarrow\mathbb{D}(M_2)$ then for
\begin{equation*}
\mathbb{M}(f)\colon M_1\cong\mathbb{M}\circ\mathbb{D}(M_1)\rightarrow
\mathbb{M}\circ\mathbb{D}(M_2)\cong M_2 
\end{equation*}
we have
$\mathbb{D}\circ\mathbb{M}(f)=f$ therefore $\mathbb{D}$ is full.
\end{proof}

\begin{rem}
There is a small mistake in Lemma 1 of \cite{Z}. The map $\omega$ is in fact
not a $p$-valuation, since assertion $(iii)$ stating that
$\omega(g^p)=\omega(g)+1$ is false. It is only true in the weaker form
$\omega(g^p)\geq \omega(g)+1$. However, this does not influence the validity
of the rest of the paper as $N_{0,n}:=\{g\in N_0\mid \omega(g)\geq n\}$ is
still a subgroup satisfying Lemma 2. Alternatively, it is possible to modify
$\omega$ so that one truely obtains a $p$-valuation. I would like to take this
opportunity to thank Torsten Schoeneberg for pointing this out to me.
\end{rem}
\begin{rem} 
Note that in the case of $R=\mathcal{O_E}$ we may end the proof of
  Proposition \ref{equivcat} by saying that $\id+X$ is invertible since $X$
  lies in $I_1^{d\times d}$ and $\mathcal{O_E}\bs
  H_1,\ell\js\cong\Lambda_\ell(H_0)$ is $I_1$-adically complete. However, in
  the general situation $R\bs H_1,\ell\js$ may not be complete
  $I_1$-adically. The reason for this is the fact that the ideals
  $(I_k)_{k\geq 1}$ are only cofinal with the ideals $I_1^k$ whenever $R$ is
  killed by a power of $p$. Therefore if $R$ is not $p$-adically complete, we
  do not have $R\bs H_1,\ell\js\cong \varprojlim R\bs H_1,\ell\js/I_1^k$ in
  general. Moreover, in case of $R=\mathcal{O_E}$ Proposition \ref{equivcat} holds for not necessarily free modules, as well. See \cite{SVZ} for the proof of this.
\end{rem}
\begin{rem}
The matrix $Y$ in the proof of Prop.\ \ref{equivcat} is given by a
  convergent sum of the terms
\begin{equation*}
-(A+B)^{-1}\varphi((A+B)^{-1})\ldots\varphi^{k-1}((A+B)^{-1}) \varphi ^{k}((A+B)^{-1}B)
  \varphi ^{k-1}(A) \ldots  \varphi  (A)A 
\end{equation*}
for $k\geq 0$ and a direct computation also shows that $(\id+Y)(\id+X)=\id=(\id+X)(\id+Y)$.
\end{rem}

\subsection{Reductive groups over $\mathbb{Q}_p$ and Whittaker functionals}

Let $p$ be a prime number let $\mathbb{Q}_p\subseteq K$ be
a finite extension with ring of integer $o_K$, uniformizer $p_K$, and residue
field $k=o_K/p_K$. This field will only play the role of coefficients, the
reductive groups will all be defined over $\mathbb{Q}_p$.
Following \cite{SVi}, let $G$ be the $\mathbb{Q}_p$-rational points of a
$\mathbb{Q}_p$-split connected reductive group over $\mathbb{Q}_p$. In
particular, $G$ is a locally $\mathbb{Q}_p$-analytic group. Moreover, we
assume that the centre of $G$ is connected. We fix a Borel subgroup $P=TN$ in
$G$ with maximal split torus $T$ and unipotent radical $N$. Let $\Phi^+$
denote, as usual, the set of positive roots of $T$ with respect to $P$ and let
$\Delta\subseteq\Phi^+$ be the subset of simple roots. For any
$\alpha\in\Phi^+$ we have the root subgroup $N_{\alpha}\subseteq N$. We recall
that $N=\prod_{\alpha\in\Phi^+}N_{\alpha}$ (set-theoretically) for any total
ordering of $\Phi^+$. Let $T_0\subseteq T$ be the maximal compact subgroup. We
fix a compact open subgroup $N_0\subseteq N$ which is totally
decomposed, in other words $N_0=\prod_{\alpha}(N_0\cap N_{\alpha})$ for any
total ordering of $\Phi^+$. Hence $P_0:=T_0N_0$ is a group. We introduce the
submonoid $T_+\subseteq T$ of all $t\in T$ such that $tN_0t^{-1}\subseteq
N_0$, or equivalently, such that $|\alpha(t)|\leq1$ for any
$\alpha\in\Delta$. Obviously, $P_+:=N_0T_+=P_0T_+P_0$ is then a submonoid of
$P$. 

We fix once and for all isomorphisms of algebraic groups
\begin{equation*}
\iota_{\alpha}\colon N_{\alpha}\overset{\cong}{\rightarrow}\mathbb{Q}_p
\end{equation*}
for $\alpha\in\Delta$, such that
\begin{equation*}
\iota_{\alpha}(tnt^{-1})=\alpha(t)\iota_{\alpha}(n)
\end{equation*}
for any $n\in N_{\alpha}$ and $t\in T$. We normalize these isomorphisms so that $\iota_\alpha(N_0\cap N_\alpha)=\mathbb{Z}_p\subset \mathbb{Q}_p$. Since
$\prod_{\alpha\in\Delta}N_{\alpha}$ is naturally a quotient of $N/[N,N]$ we
may view any homomorphism
\begin{equation*}
\ell\colon \prod_{\alpha\in\Delta}N_{\alpha}\rightarrow\mathbb{Q}_p
\end{equation*}
as a functional on $N$. We fix once and for all a homomorphism $\ell$
such that we have $\ell(N_0)=\mathbb{Z}_p$. Let $X^*(T):=\Hom_{alg}(T,\mathbb{G}_m)$ (resp.\  $X_*(T):=\Hom_{alg}(\mathbb{G}_m,T)$) be the group of algebraic characters (resp.\ cocharacters) of $T$. Since we assume that the 
centre of $G$ is connected, the quotient
$X^*(T)/\bigoplus_{\alpha\in\Delta}\mathbb{Z}\alpha$ is free. Hence we find a
cocharacter $\xi$ in $X_*(T)$ such that $\alpha\circ\xi=\id_{\mathbb{G}_m}$
for any $\alpha$ in $\Delta$. It is injective and uniquely determined up to a
central cocharacter. We fix once and for all such a $\xi$. It satisfies 
\begin{equation*}
\xi(\mathbb{Z}_p\setminus\{0\})\subseteq T_+
\end{equation*}
and
\begin{equation}\label{compat}
\ell(\xi(a)n\xi(a^{-1}))=a\ell(n)
\end{equation}
for any $a$ in $\mathbb{Q}_p^{\times}$ and $n$ in $N$ since $\ell$ is a linear
functional on the space $\prod_{\alpha\in\Delta}N_\alpha$ and therefore can be
written as a linear combination of the isomorphisms $\iota_\alpha\colon
N_\alpha\rightarrow \mathbb{Q}_p$. 

For example, if $G=\GL_n(\mathbb{Q}_p)$, $T$ is the group of diagonal
matrices, and $N$ is the group of unipotent upper triangular matrices, then we
could choose $\xi\colon \mathbb{G}_m(\mathbb{Q}_p)=\mathbb{Q}_p^\times\to
T=(\mathbb{Q}_p^{\times})^n$, $\xi(x):=\begin{pmatrix}x^{n-1}&&&\\&x^{n-2}&&\\&&\ddots&\\&&&1\end{pmatrix}$.

Put $\Gamma:=\xi(\mathbb{Z}_p^{\times})$ and $s:=\xi(p)$. The element $s$ acts
by conjugation on the group $N_0$ such that $\bigcap_ks^kN_0s^{-k}=\{1\}$. We denote this action by
$\varphi:=\varphi_s$. This is compatible with the functional $\ell$ in the
sense $\ell\circ\varphi=p\ell$ (see section \ref{phiring}) by \eqref{compat}. 
Therefore we may apply the theory of the preceding sections to any
$\varphi$-ring $R$ with the homomorphism $\ell\colon
N_0\rightarrow\mathbb{Z}_p$ and $N_1:=\Ker(\ell_{\mid N_0})$. We are going to
apply the theory of section \ref{phiring} in the setting $H_0:=N_0$ and $H_1:=N_1$.

Note that in
\cite{SVi} and \cite{Z} $\ell$ is assumed to be generic---we do not assume this
here, though. We remark that for any $\alpha\in\Delta$ the restriction of
$\ell$ to a fixed $N_{\alpha}$ is either zero or an isomorphism of $N_\alpha$
with $\mathbb{Q}_p$ and put $a_\alpha:=\ell(\iota_\alpha^{-1}(1))$. By
the assumption $\ell(N_0)=\mathbb{Z}_p$ we obtain $a_\alpha\in\mathbb{Z}_p$
for all $\alpha\in\Delta$ and $a_\alpha\in\mathbb{Z}_p^{\times}$ for at least
one $\alpha$ in $\Delta$. We put $T_{+,\ell}:=\{t\in T_+\mid
tN_1t^{-1}\subseteq N_1\}$. The monoid $T_{+,\ell}$ acts on the group $\mathbb{Z}_p$
via $\ell\colon N_0\rightarrow\mathbb{Z}_p$, too. 

A $(\varphi,\Gamma)$-ring $R$ is by definition a $\varphi$-ring (in the sense of
section \ref{phiring}) together with an action of $\Gamma\cong\mathbb{Z}_p^{\times}$ commuting with
$\varphi$ and satisfying $\gamma(\chi(x))=\chi(\xi^{-1}(\gamma)x)$. For
example
$\mathcal{O_E},\mathcal{O}^\dagger_{\mathcal{E}},\mathcal{E}^\dagger,\mathcal{R}$
are $(\varphi,\Gamma)$-rings. Note that
the endomorphism ring $\End(\mathbb{Z}_p)$ of the $p$-adic integers (as a
topological abelian group) is
isomorphic to $\mathbb{Z}_p$. On the other hand, the multiplicative monoid
$\mathbb{Z}_p\setminus\{0\}$ is isomorphic to
$\varphi^{\mathbb{N}}\Gamma$. Now having an action of $\varphi$ and
$\Gamma$ on $R$ we obtain an action of $T_{+,\ell}$ on $R$ since the map
$\ell\colon N_0\rightarrow\mathbb{Z}_p$ induces a monoid homorphism
$T_{+,\ell}\rightarrow\mathbb{Z}_p\setminus\{0\}\cong\varphi^{\mathbb{N}}\Gamma$. We
denote the kernel of this monoid homomorphism by $T_{0,\ell}$. 
Similarly, we have a natural action of $T_{+,\ell}$
on the ring $R\bs N_1,\ell\js$ by conjugation. Indeed, if $t\in T_{+,\ell}$
then since $T$ is commutative we have 
\begin{equation*}
t\varphi^k(N_1)t^{-1}=ts^kN_1s^{-k}t^{-1}=s^ktN_1t^{-1}s^{-k}=\varphi^k(tN_1t^{-1})\subseteq\varphi^k(N_1),
\end{equation*}
whence $tN_kt^{-1}\subseteq N_k$. Hence $t$ acts naturally on the skew
group ring $R[N_1/N_k,\ell]$ and by taking the limit we also obtain an action
on $R\bs N_1,\ell\js$. We denote the map on both $R$ and $R\bs
N_1,\ell\js$ induced by the action of $t\in T_{+,\ell}$ by $\varphi_t$.

Now a $T_{+,\ell}$-module over $R$ (resp.\ over $R\bs N_1,\ell\js$) is a finitely
generated free $R$-module $D$ (resp.\ $R\bs N_1,\ell\js$-module $M$) with a semilinear
action of $T_{+,\ell}$ (denoted by $\varphi_t\colon D\rightarrow D$,
resp.\ $\varphi_t\colon M\rightarrow M$ for any $t\in T_{+,\ell}$) such that
the restriction of the $T_{+,\ell}$-action to $s\in T_{+,\ell}$ 
defines a $\varphi$-module over $R$ (resp.\ over $R\bs N_1,\ell\js$). We
denote the category of $T_{+,\ell}$-modules over $R$ (resp.\ over $R\bs N_1,\ell\js$)
by $\mathfrak{M}(R,T_{+,\ell})$ (resp.\ by $\mathfrak{M}(R\bs
N_1,\ell\js,T_{+,\ell})$). 

\begin{lem}
Let $M$ be in $\mathfrak{M}(R\bs N_1,\ell\js,T_{+,\ell})$ and $D$ be in
$\mathfrak{M}(R,T_{+,\ell})$. Then the maps
\begin{eqnarray*}
1\otimes\varphi_t\colon R\bs N_1,\ell\js\otimes_{R\bs N_1,\ell\js,\varphi_t}M&\rightarrow& M\\
r\otimes m&\mapsto& r\varphi_t(m)
\end{eqnarray*}
and
\begin{eqnarray*}
1\otimes\varphi_t\colon R\otimes_{R,\varphi_t}D&\rightarrow& D\\
r\otimes d&\mapsto& r\varphi_t(d)
\end{eqnarray*}
are isomorphisms for any $t\in T_{+,\ell}$.
\end{lem}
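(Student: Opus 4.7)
The strategy is to reduce to the known étale case $t=s$ by exploiting that $T_{+,\ell}$ sits in the abelian group $T$: for each $t\in T_{+,\ell}$ I would produce an element $t'\in T_{+,\ell}$ and an integer $k\geq 1$ with $tt'=t't=s^k$. Put $t':=t^{-1}s^k\in T$; I would check directly from the defining conditions of $T_+$ and compatibility with $\ell$ that $t'\in T_{+,\ell}$ for all sufficiently large $k$. Indeed, for each $\alpha\in\Delta$ one has $\alpha(t')=\alpha(t)^{-1}p^k\in\mathbb{Z}_p$ as soon as $k\geq v_p(\alpha(t))$, and on the simple roots for which $\ell$ is nonzero on $N_\alpha$ the value $\alpha(t')=\lambda(t)^{-1}p^k$ is independent of $\alpha$, so (as in the discussion defining $T_{+,\ell}$) $t'$ preserves $N_1$.

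From the composition law $\varphi_{s^k}=\varphi_t\circ\varphi_{t'}=\varphi_{t'}\circ\varphi_t$ on both $R$ (resp.\ $R\bs N_1,\ell\js$) and $D$ (resp.\ $M$), the canonical identification $R\otimes_{R,\varphi_t}R\otimes_{R,\varphi_{t'}}D\cong R\otimes_{R,\varphi_{s^k}}D$ yields a factorisation
\begin{equation*}
1\otimes\varphi_{s^k}=(1\otimes\varphi_t)\circ\bigl(1\otimes(1\otimes\varphi_{t'})\bigr).
\end{equation*}
By iterating the étaleness hypothesis, $1\otimes\varphi_{s^k}$ is an isomorphism. Hence $1\otimes\varphi_t$ is surjective and $1\otimes(1\otimes\varphi_{t'})$ is injective; the symmetric factorisation from $s^k=t't$ shows analogously that $1\otimes(1\otimes\varphi_t)$ is injective.

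To upgrade this to injectivity of $1\otimes\varphi_t$ itself, I would check that $\varphi_{t'}$ is a faithfully flat ring endomorphism of $R$. Writing $\lambda(t)=p^mu$ with $u\in\mathbb{Z}_p^\times$ and $m\geq 0$, so $\lambda(t')=p^{k-m}u^{-1}$, the induced action of $t'$ on $R$ equals $\varphi^{k-m}\circ\gamma_{u^{-1}}$, where $\gamma_{u^{-1}}=\xi(u^{-1})\in\Gamma$ is a ring automorphism. Hence $\varphi_{t'}(R)=\varphi^{k-m}(R)$, and by \eqref{iterate} $R$ is free, so faithfully flat, of rank $p^{k-m}$ over $\varphi_{t'}(R)$. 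Since $1\otimes(1\otimes\varphi_t)$ is precisely the base change of $1\otimes\varphi_t$ along $\varphi_{t'}$, its injectivity descends to injectivity of $1\otimes\varphi_t$. Combined with the surjectivity above, this gives the claim for $D$.

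The $M$ case follows by the same outline, but requires that $\varphi_{t'}$ is faithfully flat on the noncommutative ring $R\bs N_1,\ell\js$. I expect this to come from the decomposition
\begin{equation*}
R\bs N_1,\ell\js=\bigoplus_{h\in J(\varphi_{t'}(N_0)\setminus N_0)}\varphi_{t'}\bigl(R\bs N_1,\ell\js\bigr)h,
\end{equation*}
an analogue for $\varphi_{t'}$ of the decomposition proved in the Proposition preceding Remark \ref{pronilp} for $\varphi_s$, obtained by the same Mittag-Leffler and coset-bookkeeping techniques. Verifying this generalisation carefully for an arbitrary $t'\in T_{+,\ell}$ is the main technical obstacle; once it is in place the faithfully flat descent of the previous paragraph applies verbatim.
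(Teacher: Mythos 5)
Your outline is essentially the paper's own argument: the paper also produces $k_0$ with $t':=s^{k_0}t^{-1}\in T_{+,\ell}$ (it does so by compactness of $t^{-1}N_0t$ and $t^{-1}N_1t$ rather than by root bookkeeping, and it checks $t\Ker(\ell_{\mid N})t^{-1}=\Ker(\ell_{\mid N})$ directly; your computation with $\alpha(t')$ serves the same purpose), and it then uses exactly your two factorisations of $1\otimes\varphi_{s^{k_0}}$ to read off surjectivity of $1\otimes\varphi_t$ and injectivity of its $\varphi_{t'}$-base change. Your treatment of the $D$-case is complete and correct: $\varphi_{t'}$ acts on $R$ as $\varphi^{k-m}$ composed with an element of $\Gamma$, and \eqref{iterate} gives freeness, hence faithful flatness, so injectivity descends.

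The one genuine gap is the step you yourself flag for the $M$-case: you require the decomposition $R\bs N_1,\ell\js=\bigoplus_{h}\varphi_{t'}(R\bs N_1,\ell\js)h$ for an arbitrary $t'\in T_{+,\ell}$ and leave it unproved. But this heavy input (faithful flatness of $\varphi_{t'}$ on the noncommutative ring) is not needed, and the paper does not use it. All that is required to pass from injectivity of $1\otimes(1\otimes\varphi_t)$ to injectivity of $1\otimes\varphi_t$ is that the canonical map $P\to R\bs N_1,\ell\js\otimes_{R\bs N_1,\ell\js,\varphi_{t'}}P$, $y\mapsto 1\otimes y$, be injective for $P=R\bs N_1,\ell\js\otimes_{R\bs N_1,\ell\js,\varphi_t}M$. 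Since $P$ is a finitely generated \emph{free} module, under the identification $R\bs N_1,\ell\js\otimes_{R\bs N_1,\ell\js,\varphi_{t'}}R\bs N_1,\ell\js\cong R\bs N_1,\ell\js$ the element $1\otimes y$ corresponds to the vector of $\varphi_{t'}$-images of the coordinates of $y$, so injectivity of this canonical map reduces to injectivity of the ring endomorphism $\varphi_{t'}$ of $R\bs N_1,\ell\js$. That injectivity is free of charge: $\varphi_t\circ\varphi_{t'}=\varphi_{s^{k_0}}=\varphi^{k_0}$ is injective on $R\bs N_1,\ell\js$ by the proposition in Section \ref{phiring}, hence so is $\varphi_{t'}$. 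With this observation your "main technical obstacle" disappears, and the argument closes with tools already available in the paper; no Mittag--Leffler or coset analysis for general $t'$ is necessary.
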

\begin{proof}
We only prove the statement for $M$ (the statement for $D$ is entirely
analogous). First note that the subgroups $s^kN_0s^{-k}$ (resp.\ $s^kN_1s^{-k}$) form a
system of neighbourhoods of $1$ in $N$ (resp.\ in $\Ker(\ell)$). On
the other hand, if $t$ is in $T_{+,\ell}$ then 
\begin{equation*}
t\Ker(\ell_{\mid N})t^{-1}=t\left(\bigcup_{k\in\mathbb{Z}} s^k
N_1s^{-k}\right)t^{-1}=\bigcup_{k\in\mathbb{Z}} s^ktN_1t^{-1}s^{-k}= \Ker(\ell_{\mid N}).
\end{equation*} 
since $tN_1t^{-1}$ has finite index in $N_1$. Now since $t^{-1}N_0t$ and
 on $t^{-1}N_1t$ are compact, we find $k_0>0$ so that
 $t^{-1}N_0t\subseteq s^{-k_0}N_0s^{k_0}$ and $t^{-1}N_1t\subseteq
 s^{-k_0}N_1s^{k_0}$ whence $s^{k_0}t^{-1}$ lies in $T_{+,\ell}$. Since $M$ is a $\varphi$-module over
$R\bs N_1,\ell\js$, the map
\begin{eqnarray*}
1\otimes\varphi_{s^{k_0}}\colon R\bs N_1,\ell\js\otimes_{R\bs
  N_1,\ell\js,\varphi_{s^{k_0}}}M&\rightarrow& M\\
r\otimes m&\mapsto&r\varphi_{s^{k_0}}(m)
\end{eqnarray*}
is an isomorphism. Moreover, under the identifications 
\begin{align*}
R\bs N_1,\ell\js\otimes_{R\bs N_1,\ell\js,\varphi_t}(R\bs
N_1,\ell\js\otimes_{R\bs N_1,\ell\js,\varphi_{s^{k_0}t^{-1}}}M)\cong R\bs
N_1,\ell\js\otimes_{R\bs N_1,\ell\js,\varphi_{s^{k_0}}}M\cong\\ \cong R\bs
N_1,\ell\js\otimes_{R\bs N_1,\ell\js,\varphi_{s^{k_0}t^{-1}}}(R\bs
N_1,\ell\js\otimes_{R\bs N_1,\ell\js,\varphi_t}M) 
\end{align*}
we have
\begin{equation*}
(1\otimes\varphi_t)\circ(1\otimes(1\otimes\varphi_{s^{k_0}t^{-1}}))=1\otimes\varphi_{s^{k_0}}=(1\otimes\varphi_{s^{k_0}t^{-1}})\circ(1\otimes(1\otimes\varphi_t)), 
\end{equation*}
so $1\otimes\varphi_t$ is surjective by the equality on the left and
 injective by the equality on the right.
\end{proof}

\begin{rem}
Note that the action of $T_{0,\ell}$ on a
$T_{+,\ell}$-module $D$ over $R$ is linear since $T_{0,\ell}$ acts trivially
on $R$. Therefore this action extends (uniquely) to the subgroup $T_\ell\leq T$
generated by the monoid $T_{0,\ell}$.
\end{rem}
\begin{proof}
By the \'etaleness of the action of $\varphi_t$ for $t\in T_{0,\ell}$ we see
immediately that $\varphi_t$ is an automorphism of $D$ since $\varphi_t\colon
R\rightarrow R$ is the identity map. Therefore $\varphi_t$ has a (left and right) inverse (as
a linear transformation of the $R$-module $D$) which we denote by
$\varphi_{t^{-1}}$. The remark follows noting that $T_\ell$ consists of the
quotients of elements of $T_{0,\ell}$.
\end{proof}

In the case when $\ell=\ell_\alpha$ given by the projection of
$\prod_{\beta\in\Delta}N_\beta$ to $N_\alpha$ for some fixed simple root
$\alpha\in\Delta$ it is clear that $T_{+,\ell}=T_+$ as $N_\beta$ is
$T_+$-invariant for each $\beta\in\Phi^+$ and $\Ker(\ell)=\prod_{\alpha\neq\beta\in\Phi^+}N_\beta$. Therefore
$T_\ell\cong(\mathbb{Q}_p^{\times})^{n-1}$ where $n=\dim T$. This is the case
in which a $G$-equivariant sheaf on $G/P$ is constructed in \cite{SVZ}
associated to any object $D$ in $\mathfrak{M}(\mathcal{O_E},T_{+,\ell})$. So an object in
$\mathfrak{M}(\mathcal{O_E},T_{+,\ell})$ is nothing else but a $(\varphi,\Gamma)$-module
over $\mathcal{O_E}$ with an additional linear action of the group
$T_\ell$ (once we fixed the cocharacter $\xi$). In case of
$G=\GL_2(\mathbb{Q}_p)$ this additional action is just an action of the centre
$Z=T_\ell$ of $G$. In the work of Colmez \cite{Co1,Co2} on the $p$-adic
Langlands correspondence for $\GL_2(\mathbb{Q}_p)$ the action of $Z$ on an
irreducible $2$-dimensional \'etale $(\varphi,\Gamma)$-module $D$ is given by the
determinant (ie.\ the action of $\mathbb{Q}_p^{\times}\cong Z$ on
$\bigwedge^2D$). It is unclear at this point whether the action of $T_\ell$
can be chosen canonically (in a similar fashion) for a given $n$-dimensional
irreducible \'etale $(\varphi,\Gamma)$-module $D$.

As a corollary of Prop.\ \ref{equivcat} we obtain

\begin{pro}\label{T_+equiv}
The functors $\mathbb{D}=R\otimes_{R\bs N_1,\ell\js,\ell}\cdot$ and
$\mathbb{M}=R\bs N_1,\ell\js\otimes_{R,\iota}\cdot$ are quasi-inverse equivalences of
categories between $\mathfrak{M}(R\bs N_1,\ell\js,T_{+,\ell})$ and $\mathfrak{M}(R,T_{+,\ell})$.
\end{pro}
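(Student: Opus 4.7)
The plan is to leverage Proposition \ref{equivcat}, which already gives quasi-inverse equivalences at the level of $\varphi$-modules (i.e.\ for the distinguished element $s=\xi(p)\in T_{+,\ell}$, whose action is $\varphi=\varphi_s$), and then to upgrade this equivalence by showing that the remaining action of $T_{+,\ell}$ is transported automatically by $\mathbb{D}$ and $\mathbb{M}$.

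First I would verify that both functors actually land in the $T_{+,\ell}$-module categories. The key observation is that $\iota_R$ and $\ell_R$ are $T_{+,\ell}$-equivariant: $\iota_R$ picks out the coefficient of $1\in N_1/N_k$, and conjugation by $t\in T_{+,\ell}$ fixes $1\in N_1$; similarly $\ell_R$ is the augmentation and commutes with $\varphi_t$ for the same reason. Moreover, for each $k$ the natural $T_{+,\ell}$-action on $R[N_1/N_k,\ell]$ assembled from $\varphi_t$ on $R$ and conjugation on $N_1/N_k$ is a ring endomorphism (this requires checking compatibility with the twisted multiplication \eqref{mult}, in the same spirit as the proof of Lemma \ref{lemmamult}), and in the limit one obtains a ring action on $R\bs N_1,\ell\js$. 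Hence $\mathbb{D}(M)$ and $\mathbb{M}(D)$ inherit semilinear $T_{+,\ell}$-actions via $\varphi_t(r\otimes x):=\varphi_t(r)\otimes\varphi_t(x)$, and the \'etaleness condition at arbitrary $t\in T_{+,\ell}$ follows from the one at $s$ by the lemma preceding the statement.

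Now $\mathbb{D}\circ\mathbb{M}\cong\id$ on $\mathfrak{M}(R,T_{+,\ell})$ is immediate from $\ell_R\circ\iota_R=\id_R$ together with the equivariance above. For the other direction I would argue that the natural isomorphism $\Theta_M\colon M\xrightarrow{\sim}\mathbb{M}\mathbb{D}(M)$ produced in the proof of Proposition \ref{equivcat} is automatically $\varphi_t$-equivariant for every $t\in T_{+,\ell}$, by invoking the faithfulness part of that proposition. Set $f_1:=\varphi_t^{\mathbb{M}\mathbb{D}(M)}\circ\Theta_M$ and $f_2:=\Theta_M\circ\varphi_t^M$; these are $R\bs N_1,\ell\js$-linear maps $M\to\mathbb{M}\mathbb{D}(M)$. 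Since $T$ is abelian, $\varphi_t$ commutes with $\varphi=\varphi_s$ on both $M$ and $\mathbb{M}\mathbb{D}(M)$; combined with the $\varphi$-equivariance of $\Theta_M$ this makes $f_1$ and $f_2$ morphisms in $\mathfrak{M}(R\bs N_1,\ell\js,\varphi)$. As $\mathbb{D}(\Theta_M)=\id_{\mathbb{D}(M)}$, both satisfy $\mathbb{D}(f_j)=\varphi_t^{\mathbb{D}(M)}$, so the faithfulness of $\mathbb{D}$ forces $f_1=f_2$, which is exactly the required equivariance.

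The main obstacle I anticipate is not conceptual but the bookkeeping needed to confirm that the conjugation action of $T_{+,\ell}$ on $R\bs N_1,\ell\js$ really defines a ring endomorphism compatible with the twisted multiplication \eqref{mult}, so that $\iota_R$ and $\ell_R$ are genuinely $T_{+,\ell}$-equivariant ring maps and the tensor products defining $\mathbb{D}$ and $\mathbb{M}$ acquire the expected semilinear $T_{+,\ell}$-structure. Once this setup is in place, upgrading the equivalence from $\varphi$-modules to $T_{+,\ell}$-modules is purely formal via the faithfulness argument above.
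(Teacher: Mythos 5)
Your proposal follows essentially the same route as the paper: reduce everything to showing that the isomorphism $\Theta_M\colon M\to\mathbb{M}\circ\mathbb{D}(M)$ coming from Proposition \ref{equivcat} commutes with $\varphi_t$ for every $t\in T_{+,\ell}$, using that $\mathbb{D}(\Theta_M)=\id$ and that $\varphi_t$ commutes with $\varphi=\varphi_s$ because $T$ is commutative. One caveat: your maps $f_1=\varphi_t\circ\Theta_M$ and $f_2=\Theta_M\circ\varphi_t$ are not $R\bs N_1,\ell\js$-linear but $\varphi_t$-semilinear, so they are not morphisms in $\mathfrak{M}(R\bs N_1,\ell\js,\varphi)$ and the faithfulness statement of Proposition \ref{equivcat} does not apply to them verbatim. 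This is easily repaired, either by linearizing (pass to the induced linear maps $R\bs N_1,\ell\js\otimes_{\varphi_t}M\to\mathbb{M}\circ\mathbb{D}(M)$, whose source is again an object of the category since $\varphi_t$ commutes with $\varphi$, and then quote faithfulness) or, as the paper does, by running the faithfulness computation directly on the difference $\Theta_M\circ\varphi_t-\varphi_t\circ\Theta_M$: since $\mathbb{D}(\Theta_M)=\id$ this difference lands in $I_1\,\mathbb{M}\circ\mathbb{D}(M)$, and writing $m=\sum_u u\varphi_s^k(m_{u,k})$ and using $\varphi_s^k(I_1)\subseteq I_{k+1}$ together with $\bigcap_k I_k=0$ forces it to vanish. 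With that one-line adjustment your argument is correct and coincides with the paper's proof; the preliminary points about $\iota_R$, $\ell_R$ being $T_{+,\ell}$-equivariant and about \'etaleness for all $t$ following from the case of $s$ are likewise what the paper relies on (the latter is the lemma preceding the proposition).
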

\begin{proof}
Since we clearly have
$\mathbb{D}\circ\mathbb{M}\cong\id_{\mathfrak{M}(R,T_{+,\ell})}$ and the faithfulness
of $\mathbb{D}$ is a formal consequence of Prop.\ \ref{equivcat},
it suffices to show that the isomorphism $\Theta\colon
M\rightarrow\mathbb{M}\circ\mathbb{D}(M)$ is $T_{+,\ell}$-equivariant whenever $M$
lies in $\mathfrak{M}(R\bs N_1,\ell\js,T_{+,\ell})$. Let $t\in T_{+,\ell}$ be
arbitrary and for an $m\in M$ write $m=\sum_{u\in
  J(N_0/\varphi_s^k(N_0))}u\varphi_s^k(m_{u,k})$. Since
$\mathbb{D}(\Theta)=\id_{\mathbb{D}(M)}$, we have
$(\Theta\circ\varphi_t-\varphi_t\circ\Theta)(M)\subseteq I_1\mathbb{M}\circ\mathbb{D}(M)$. We compute
\begin{align*}
(\Theta\circ\varphi_t-\varphi_t\circ\Theta)(m)=\sum_{u\in
    J(N_0/\varphi_s^k(N_0))}\varphi_t(u)\varphi_s^k\circ(\Theta\circ\varphi_t-\varphi_t\circ\Theta)(m_{u,k})\subseteq\\ \subseteq\varphi_s^k(I_1\mathbb{M}\circ\mathbb{D}(M))\subseteq
  I_{k+1}\mathbb{M}\circ\mathbb{D}(M)
\end{align*}
for all $k\geq 0$ showing that $\Theta$ is $\varphi_t$-equivariant.
\end{proof}

\section{The case of overconvergent and Robba rings}\label{microlocal}

\subsection{The locally analytic distribution algebra}

Let $p$ be a prime and put $\epsilon_p=1$ if $p$ is odd
and $\epsilon_p=2$ if $p=2$. If $H$ is a compact locally $\mathbb{Q}_p$-analytic group then we
denote by $D(H,K)$ the algebra of $K$-valued locally analytic
distributions on $H$. Recall that $D(H,K)$ is equal to the strong
dual of the locally convex vector space $C^{an}(H,K)$ of
$K$-valued locally $\mathbb{Q}_p$-analytic functions on $H$ with the
convolution product.

Recall that a topologically finitely generated pro-$p$ group $H$ is uniform, if it is powerful (ie.\ $H/\overline{H^{p^{\epsilon_p}}}$ is abelian) and for all $i\geq 1$ we have $|P_i(H):P_{i+1}(H)|=|H:P_2(H)|$ where $P_1(H)=H$ and $P_{i+1}(H)=\overline{P_i(H)^p[P_i(H),H]}$ (see \cite{DDMS} for more details). Now if $H$ is uniform, then it has a bijective global chart
\begin{eqnarray*}
\mathbb{Z}_p^{d}&\rightarrow&H\\
(x_1,\dots,x_{d})&\mapsto&h_1^{x_1}\cdots h_{d}^{x_{d}}
\end{eqnarray*}
where $h_{1},\dots,h_{d}$ is a fixed (ordered) minimal set of
topological generators of $H$. Putting
$b_i:=h_{i}-1\in\mathbb{Z}[G]$, ${\bf
b}^{\bf k}:=b_{1}^{k_1}\cdots b_{d}^{k_d}$ for
${\bf k}=(k_i)\in\mathbb{N}^{d}$ we can identify $D(H,K)$ with the ring
of all formal series
\begin{equation*}
\lambda=\sum_{{\bf k}\in\mathbb{N}^{d}}d_{\bf k}{\bf b}^{\bf k}
\end{equation*}
with $d_{\bf k}$ in $K$ such that the set
$\{|d_{\bf k}|\rho^{\epsilon_p|{\bf k}|}\}_{\bf k}$ is bounded for all
$0<\rho<1$. Here the first $|\cdot|$ is the normalized absolute
value on $K$ and the second one denotes the degree of ${\bf k}$,
that is $\sum_{i}k_{i}$. For any $\rho$ in $p^{\mathbb{Q}}$
with $p^{-1}<\rho<1$, we have a multiplicative norm
$\|\cdot\|_{\rho}$ on $D(H,K)$ \cite{ST1} given by
\begin{equation*}
\|\lambda\|_{\rho}:=\sup_{\bf k}|d_{\bf k}|\rho^{\epsilon_p|{\bf k}|}\ .
\end{equation*}
The family of norms $\|\cdot\|_{\rho}$ defines the Fr\'echet
topology on $D(H,K)$. The completion with respect to the norm
$\|\cdot\|_{\rho}$ is denoted by $D_{[0,\rho]}(H,K)$.

\subsection{Microlocalization}\label{partialrobba}

Let $G$ be the group of $\mathbb{Q}_p$-points of a $\mathbb{Q}_p$-split
connected reductive group with a fixed Borel subgroup $P=TN$. We also choose a
simple root $\alpha$ for the Borel subgroup $P$ and let $\ell=\ell_\alpha$ be
the functional given by the projection 
\begin{equation*}
\ell_\alpha\colon N\rightarrow
N/[N,N]\rightarrow\prod_{\beta\in\Delta}N_\beta\rightarrow N_\alpha\overset{\iota_\alpha}{\rightarrow}\mathbb{Q}_p\ .
\end{equation*}
Therefore we have $T_{+,\ell}=T_+$ as $N_\beta$ is
$T_+$-invariant for each $\beta\in\Phi^+$. We assume further that $N_0$ is \emph{uniform}. 

Let us begin by recalling the definition of the
classical Robba ring for the group $\mathbb{Z}_p$. The
distribution algebra $D(\mathbb{Z}_p,K)$ of $\mathbb{Z}_p$ can clearly be
identified with the ring of power series (in variable $T$) with
coefficients in $K$ that are convergent in the $p$-adic open unit
disc. Now put
\begin{equation*}
\mathcal{A}_{[\rho,1)}:=\hbox{the ring of all Laurent series
}f(T)=\sum_{n\in\mathbb{Z}}a_nT^n\hbox{ that converge for
}\rho\leq|T|<1.
\end{equation*}
For $\rho\leq\rho'$ we have a natural inclusion
$\mathcal{A}_{[\rho,1)}\hookrightarrow\mathcal{A}_{[\rho',1)}$ so
we can form the inductive limit
\begin{equation*}
\mathcal{R}:=\varinjlim_{\rho\rightarrow1}\mathcal{A}_{[\rho,1)}
\end{equation*}
defining the Robba ring. $\mathcal{R}$ is a $(\varphi,\Gamma)$-ring over $\mathbb{Z}_p$
with the maps $\chi\colon\mathbb{Z}_p\rightarrow\mathcal{R}^{\times}$ and
$\varphi\colon\mathcal{R}\rightarrow\mathcal{R}$ such that $\chi(1)=1+T$,
$\varphi(T)=(T+1)^p-1$, and $\gamma(T)=(1+T)^{\xi^{-1}(\gamma)}-1$ for $\gamma\in\Gamma$.

Recall that the ring 
\begin{equation*}
\mathcal{O}_{\mathcal{E}}^\dagger:=\{\sum_{n\in\mathbb{Z}}a_nT^n\mid a_n\in o_K\text{
  and there exists a }\rho<1\text{ s.t.\ }|a_n|\rho^n\to 0\text{ as }n\to-\infty\}
\end{equation*}
is called the ring of overconvergent power series. It is a subring of both
$\mathcal{O_E}$ and $\mathcal{R}$. We put
$\mathcal{E}^\dagger:=K\otimes_{o_K}\mathcal{O}_{\mathcal{E}}^\dagger$ which is
also a subring of the Robba ring. These rings are also
$(\varphi,\Gamma)$-rings.

The rings $\mathcal{O}_{\mathcal{E}}^\dagger\bs N_1,\ell\js$ and
$\mathcal{R}\bs N_1,\ell\js$ constructed in the previous sections are only
overconvergent (resp.\ Robba) in the variable $b_\alpha$ for the fixed simple
root $\alpha$. In all the other variables $b_\beta$ they behave like the
Iwasawa algebra $\Lambda(N_1)$ since we took the completion with respect to
the ideals generated by $(N_k-1)$. Moreover, in the projective limit
$\mathcal{O}_{\mathcal{E}}^\dagger\bs N_1,\ell\js\cong
\varprojlim_k\mathcal{O}_{\mathcal{E}}^\dagger[N_1/N_k,\ell]$ the terms are
not forced to share a common region of convergence. In this section we
construct the rings $\mathcal{R}^{int}(N_1,\ell)$ and
$\mathcal{R}(N_1,\ell)$ with better analytic properties.

We start with constructing a ring $\mathfrak{R}_0=\mathfrak{R}_0(N_0,K,\alpha)$ as a certain microlocalization
of the distribution algebra $D(N_0,K)$. We fix the topological
generator $n_{\alpha}$ of $N_0\cap N_{\alpha}$ such that $\ell_{\alpha}(n_{\alpha})=1$. This is possible since we normalized $\iota_\alpha\colon N_\alpha\overset{\sim}{\to}\mathbb{Q}_p$ so that $\iota_\alpha(N_0\cap N_\alpha)=\mathbb{Z}_p$.
Further, we fix topological generators $n_{\beta}$ of $N_0\cap
N_{\beta}$ for each $\alpha\neq\beta\in\Phi^+$. Since $N_0$ is uniform of
dimension $|\Phi^+|$, the set $A:=\{n_{\beta}\mid\beta\in\Phi^+\}$
is a minimal set of topological generators of the group $N_0$.
Moreover, $A\setminus\{n_{\alpha}\}$ is a minimal set of
generators of the group $N_1=\Ker(\ell)\cap N_0$. Further, we put
$b_{\beta}:=n_\beta-1$. For any real number $p^{-1}<\rho<1$ in $p^{\mathbb{Q}}$ the formula
$\|b_\beta\|_\rho:=\rho$ (for all $\beta\in\Phi^+$) defines a multiplicative
norm on $D(N_0,K)$. The completion of $D(N_0,K)$ with respect to this norm is
a Banach algebra which we denote by $D_{[0,\rho]}(N_0,K)$. Let now
$p^{-1}<\rho_1<\rho_2<1$ be real numbers in $p^{\mathbb{Q}}$. We take the
generalized microlocalization (cf. the Appendix of \cite{SZ}) of the Banach algebra
$D_{[0,\rho_2]}(N_0,K)$ at the multiplicatively closed set
$\{(n_{\alpha}-1)^i\}_{i\geq1}$ with respect to the pair of
norms $(\rho_1,\rho_2)$. This provides us with the Banach algebra
$D_{[\rho_1,\rho_2]}(N_0,K,\alpha)$. Recall that the elements of this
Banach algebra are equivalence classes of Cauchy sequences
$((n_\alpha-1)^{-k_n}x_n)_n$ (with $x_n\in D_{[0,\rho_2]}(N_0,K)$) with
respect to the norm $\|\cdot\|_{\rho_1,\rho_2}:=\max(\|\cdot\|_{\rho_1},\|\cdot\|_{\rho_2})$. 

Letting $\rho_2$ tend to
$1$ we define
$D_{[\rho_1,1)}(N_0,K,\alpha):=\varprojlim_{\rho_2\rightarrow
1}D_{[\rho_1,\rho_2]}(N_0,K,\alpha)$. This is a Fr\'echet-Stein
algebra (the proof is completely analogous to that of Theorem 5.5
in \cite{SZ}, but it is not a formal consequence of that). However, we will not
need this fact in the sequel so we omit the proof. Now the partial Robba ring
$\mathfrak{R}_0:=\mathfrak{R}_0(N_0,K,\alpha):=\varinjlim_{\rho_1\rightarrow1}D_{[\rho_1,1)}(N_0,K,\alpha)$
is defined as the injective limit of these Fr\'echet-Stein
algebras. We equip $\mathfrak{R}_0$ with the
inductive limit topology of the Fr\'echet topologies of
$D_{[\rho_1,1)}(N_0,K,\alpha)$. By the following parametrization
the partial Robba ring can be thought of as a skew Laurent series
ring on the variables $b_{\beta}$ ($\beta\in\Phi^+$) with
certain convergence conditions such that only the variable $b_\alpha$ is
invertible. Note that in \cite{SZ} a ``full'' Robba ring is constructed such
that all the variables $b_\beta$ are invertible. We denote the corresponding
``fully'' microlocalized Banach algebras by $D_{[\rho_1,\rho_2]}(N_0,K)$. In all these rings we will often omit $K$ from the notation if it is clear from the context.

\begin{rem}
The microlocalization of quasi-abelian normed algebras (Appendix of \cite{SZ})
is somewhat different from the microlocalisation constructing
$\Lambda_\ell(N_0)$ where first a localization (with respect to an Ore set) is
constructed and then the completion is taken. The set we are inverting here
does not satisfy the Ore property, so the localization in the usual sense does
not exist. However, we may complete and localize at the same time in order to
obtain a microlocalized ring directly.
\end{rem}

In order to be able to work with these rings we will show that
their elements can be viewed as Laurent series. The discussion below is
completely analogous to the discussion before Prop.\ A.24 in
\cite{SZ}. However, for the convenience of the reader, we explain the
method specialized to our case here. We introduce the affinoid domain
\begin{equation*}
    A_{\alpha}[\rho_1,\rho_2] := \{(z_\beta)_{\beta\in\Phi^+} \in
     \mathbb{C}_p^{\Phi^+} : \rho_1 
    \leq |z_{\alpha}| \leq \rho_2, 0\leq |z_\beta/z_\alpha|\leq 1\
    \hbox{for}\ \alpha\neq\beta\in\Phi^+\} \ .
\end{equation*}
This has the affinoid subdomain
\begin{equation*}
    X^{\Phi^+}_{[\rho_1,\rho_2]} := \{(z_\beta)_{\beta\in\Phi^+} \in \mathbb{C}_p^{\Phi^+} : \rho_1 \leq |z_{\beta_1}| = \ldots = |z_{\beta_{|\Phi^+|}}| \leq \rho_2\}
\end{equation*}
(where $\{\beta_1,\dots,\beta_{|\Phi^+|}\}=\Phi^+$) as defined in
\cite{SZ} (Prop.\ A.24). 

\begin{lem}
The ring $\mathcal{O}_K(A_\alpha[\rho_1,\rho_2])$ of $K$-analytic functions
on $A_\alpha[\rho_1,\rho_2]$ is the ring of all Laurent series
\begin{equation*}
    f(\mathbf{Z}) = \sum_{\bk \in \mathbb{Z}^{\{\alpha\}}\times\mathbb{N}^{\Phi^+\setminus\{\alpha\}}} d_\bk
    \mathbf {Z}^\bk
\end{equation*}
with $d_\bk \in K$ and such that $\lim_{\bk \rightarrow
\infty} |d_\bk| \rho^{\bk} = 0$ for any $\rho_1 \leq \rho \leq
\rho_2$.
Here
\begin{equation*}
    \mathbf{Z}^\bk := \prod_{\beta\in\Phi^+}Z_\beta^{k_\beta}
     \qquad\text{and}\qquad \rho^\bk :=\rho^{\sum_{\beta\in\Phi^+}k_\beta}
\end{equation*}
and ${\bf k}\to \infty$ means that $\sum_{\beta\in\Phi^+}|k_\beta|\to\infty$. This is the subring of $\mathcal{O}_K(X^{\Phi^+}_{[\rho_1,\rho_2]})$
 consisting of elements in which the variables $Z_\beta$ appear only
 with nonnegative exponent for all $\alpha\neq\beta\in\Phi^+$. 
\end{lem}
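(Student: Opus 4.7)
The plan is to realize $A_\alpha[\rho_1,\rho_2]$ as a product of a one-dimensional annulus with a polydisc via an explicit change of coordinates, so that its ring of analytic functions becomes a standard Tate algebra. Concretely, I would introduce new variables
\[
W_\beta := Z_\beta/Z_\alpha \qquad (\beta\in\Phi^+\setminus\{\alpha\})
\]
and keep $Z_\alpha$. The defining conditions $\rho_1\leq |z_\alpha|\leq\rho_2$ and $|z_\beta|\leq |z_\alpha|$ translate into $\rho_1\leq |z_\alpha|\leq\rho_2$ together with $|w_\beta|\leq 1$ for all $\beta\neq\alpha$. In these coordinates $A_\alpha[\rho_1,\rho_2]$ is an affinoid product of the closed annulus with the closed unit polydisc, so by the standard Tate-algebra description its ring of rigid analytic functions is the completion
\[
\mathcal{O}_K\bigl(\{\rho_1\leq|Z_\alpha|\leq\rho_2\}\bigr)\,\widehat{\otimes}_K\,K\langle W_\beta : \beta\neq\alpha\rangle,
\]
whose elements are exactly the series
\[
\sum_{\substack{m\in\mathbb{Z}\\ \mathbf{n}\in\mathbb{N}^{\Phi^+\setminus\{\alpha\}}}} c_{m,\mathbf{n}}\, Z_\alpha^m\mathbf{W}^{\mathbf{n}}
\]
with $|c_{m,\mathbf{n}}|\rho^m\to 0$ uniformly in $\mathbf{n}$ as $|m|+|\mathbf{n}|\to\infty$, for every $\rho_1\leq\rho\leq\rho_2$.

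Next I would substitute back $W_\beta=Z_\beta/Z_\alpha$ to obtain a Laurent-series presentation in the original variables. Writing $k_\alpha:=m-\sum_{\beta\neq\alpha}n_\beta$ and $k_\beta:=n_\beta$ for $\beta\neq\alpha$ gives a bijection between the indexing set $\mathbb{Z}\times\mathbb{N}^{\Phi^+\setminus\{\alpha\}}$ (in the $(m,\mathbf{n})$-coordinates) and $\mathbb{Z}^{\{\alpha\}}\times\mathbb{N}^{\Phi^+\setminus\{\alpha\}}$, with $d_{\mathbf{k}}=c_{m,\mathbf{n}}$ and the crucial identity $\sum_{\beta\in\Phi^+}k_\beta=m$. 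In particular $\rho^{\mathbf{k}}=\rho^m$, so the convergence condition above becomes exactly $|d_{\mathbf{k}}|\rho^{\mathbf{k}}\to 0$ for all $\rho_1\leq\rho\leq\rho_2$, as claimed.

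For the second assertion, note that the inclusion $X^{\Phi^+}_{[\rho_1,\rho_2]}\subseteq A_\alpha[\rho_1,\rho_2]$ holds trivially since on $X^{\Phi^+}_{[\rho_1,\rho_2]}$ all $|z_\beta|$ coincide and thus lie in $[\rho_1,\rho_2]$ and satisfy $|z_\beta|\leq|z_\alpha|$. The induced restriction morphism $\mathcal{O}_K(A_\alpha[\rho_1,\rho_2])\to \mathcal{O}_K(X^{\Phi^+}_{[\rho_1,\rho_2]})$ is injective by the uniqueness of Laurent-series expansions (the target being described, via Prop.\ A.24 of \cite{SZ}, as Laurent series in $\mathbf{Z}^{\mathbf{k}}$ for $\mathbf{k}\in\mathbb{Z}^{\Phi^+}$ satisfying the same convergence condition $|d_{\mathbf{k}}|\rho^{\mathbf{k}}\to 0$). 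Comparing the two descriptions, the image is precisely the subring cut out by the condition $k_\beta\geq 0$ for all $\beta\neq\alpha$.

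The main technical point to verify is that the change of coordinates $(Z_\alpha,(Z_\beta)_{\beta\neq\alpha})\leftrightarrow(Z_\alpha,(W_\beta)_{\beta\neq\alpha})$ genuinely induces an isomorphism of affinoid $K$-algebras, and that the spectral norms $\|\cdot\|_\rho$ on monomials transform correctly under the substitution. This is standard rigid-analytic bookkeeping but must be done carefully because the total $\rho$-weight $\rho^{\sum_\beta k_\beta}$ of a monomial $\mathbf{Z}^{\mathbf{k}}$ has to match the weight $\rho^m$ of the corresponding monomial $Z_\alpha^m\mathbf{W}^{\mathbf{n}}$; the identity $m=\sum_\beta k_\beta$ noted above is exactly what makes this work.
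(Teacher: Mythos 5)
Your proposal is correct, but it takes a different route from the paper. The paper does not change coordinates: it takes the Laurent-series description of $\mathcal{O}_K(X^{\Phi^+}_{[\rho_1,\rho_2]})$ from Prop.\ A.24 of \cite{SZ} as given, views $\mathcal{O}_K(A_\alpha[\rho_1,\rho_2])$ inside it via restriction along $X^{\Phi^+}_{[\rho_1,\rho_2]}\subseteq A_\alpha[\rho_1,\rho_2]$, and then identifies the image in two lines: a function on $A_\alpha[\rho_1,\rho_2]$ converges at $z_\beta=0$ for $\beta\neq\alpha$, so only nonnegative powers of $Z_\beta$ occur, and conversely any Laurent series with nonnegative exponents in the $Z_\beta$ and the stated decay converges on all of $A_\alpha[\rho_1,\rho_2]$ because of the trivial estimate $|\prod_{\beta}z_\beta^{k_\beta}|\leq|z_\alpha|^{\sum_\beta k_\beta}$ there. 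You instead compute $\mathcal{O}_K(A_\alpha[\rho_1,\rho_2])$ directly: since $Z_\alpha$ is invertible on the domain, the substitution $W_\beta=Z_\beta/Z_\alpha$ identifies it with an annulus times a closed unit polydisc, whose affinoid algebra is the completed tensor product, and the re-indexing $k_\alpha=m-\sum_{\beta\neq\alpha}n_\beta$, $k_\beta=n_\beta$, with $\sum_\beta k_\beta=m$, turns the standard convergence condition into the one in the statement; the description of $\mathcal{O}_K(X^{\Phi^+}_{[\rho_1,\rho_2]})$ is then only needed for the final comparison. Your route is more self-contained for the main assertion and makes the product structure of the domain (hence the monomial norm $\max(\rho_1^{\bk},\rho_2^{\bk})$) transparent, at the cost of verifying the affinoid isomorphism and the tensor-product description; the paper's route is shorter because Prop.\ A.24 of \cite{SZ} is already in hand. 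Two small points you should make explicit to close the argument: the restriction map used for the second assertion is injective because Laurent coefficients of elements of $\mathcal{O}_K(X^{\Phi^+}_{[\rho_1,\rho_2]})$ are unique, and under your re-indexing one has $\max(|m|,|\mathbf{n}|)\leq\sum_\beta|k_\beta|\leq|m|+2|\mathbf{n}|$, so ``tending to infinity'' means the same in both index sets and the two convergence conditions genuinely coincide.
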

\begin{proof}
Since $X^{\Phi^+}_{[\rho_1,\rho_2]}\subseteq A_\alpha[\rho_1,\rho_2]$, we
 clearly have $\mathcal{O}_K(A_\alpha[\rho_1,\rho_2])\subseteq
 \mathcal{O}_K(X^{\Phi^+}_{[\rho_1,\rho_2]})$. Moreover, the power
 series in  $\mathcal{O}_K(A_\alpha[\rho_1,\rho_2])$ converge for
 $z_\beta=0$ ($\beta\neq \alpha$), hence these variables appear with
 nonnegative exponent. On the other hand, if we have a power series
 $f(\mathbf{Z})\in \mathcal{O}_K(X^{\Phi^+}_{[\rho_1,\rho_2]})$ such
 that the variables $Z_\beta$ have nonnegative exponent for all
 $\alpha\neq\beta\in\Phi^+$ then it also converges in the region $
 A_\alpha[\rho_1,\rho_2]$ as we have the trivial estimate
 $|\prod_{\beta\in\Phi^+}z_\beta^{k_\beta}|\leq
 |z_\alpha|^{\sum_{\beta\in\Phi^+}k_\beta}$ in this case.
\end{proof}

Since $\rho^\bk \leq
\max(\rho_1^\bk ,\rho_2^\bk)$ for any $\rho_1 \leq \rho \leq \rho_2$ and any
$\bk \in \mathbb
{Z}^{(\alpha)}\times\mathbb{N}^{\Phi^+\setminus\{\alpha\}}$ the
convergence condition on $f$ is equivalent to
\begin{equation*}
    \lim_{\bk \rightarrow
\infty} |d_\bk| \rho_1^{\bk} = \lim_{\bk \rightarrow \infty}
|d_\bk| \rho_2^{\bk} = 0 \ .
\end{equation*}
The spectral norm on the affinoid algebra
$\mathcal{O}_K(A_\alpha[\rho_1,\rho_2])$ (for the definition of these notions see \cite{FvdP}) is given by
\begin{align*}
    \|f\|_{A_\alpha[\rho_1,\rho_2]} & = \sup_{\rho_1 \leq \rho \leq
 \rho_2} \max_{\bk \in \mathbb{Z}^{\{\alpha\}}\times\mathbb{N}^{\Phi^+\setminus\{\alpha\}}} |d_\bk|\rho^\bk \\
    & = \max( \max_{\bk \in \mathbb{Z}^{\{\alpha\}}\times\mathbb{N}^{\Phi^+\setminus\{\alpha\}}} |d_\bk|\rho_1^\bk,
    \max_{\bk \in \mathbb{Z}^{\{\alpha\}}\times\mathbb{N}^{\Phi^+\setminus\{\alpha\}}} |d_\bk| \rho_2^\bk) \ .
\end{align*}
Setting $\bb^\bk := \prod_{\beta\in\Phi^+}b_\beta^{ k_\beta} $ for some
fixed ordering of $\Phi^+$ and for any
$\bk = ( k_\beta)_{\beta\in\Phi^+} \in \mathbb{Z}^{\{\alpha\}}\times\mathbb{N}^{\Phi^+\setminus\{\alpha\}}$ we claim that $f(\bb) := \sum_{\bk \in
\mathbb{Z}^{\{\alpha\}}\times\mathbb{N}^{\Phi^+\setminus\{\alpha\}}} d_\bk \bb^\bk$ converges in $D_{[\rho_1,\rho_2]}(N_0,K,\alpha)$ for $f\in\mathcal{O}_K(A_\alpha[\rho_1,\rho_2])$.
As a consequence of Prop.\ A.21 and Lemma A.7.iii in \cite{SZ}
we have
\begin{equation*}
    \|\bb^\bk\|_{\rho_1,\rho_2} = \max(\rho_1^\bk,\rho_2^\bk)
\end{equation*}
for any $\bk \in \mathbb{Z}^{\{\alpha\}}\times\mathbb{N}^{\Phi^+\setminus\{\alpha\}}$. Hence
\begin{equation*}
    \lim_{\bk \rightarrow
\infty} \|d_\bk\bb^\bk\|_{\rho_1,\rho_2} = \lim_{\bk \rightarrow
\infty} \max(|d_\bk| \rho_1^\bk, |d_\bk| \rho_2^\bk) = \max(
\lim_{\bk \rightarrow \infty} |d_\bk| \rho_1^\bk,
\lim_{\bk \rightarrow \infty} |d_\bk| \rho_2^\bk) = 0 \ .
\end{equation*}
Therefore
\begin{align*}
    \mathcal{O}_K(A_\alpha[\rho_1,\rho_2]) & \longrightarrow
    D_{[\rho_1,\rho_2]}(N_0,K,\alpha) \\
    f & \longmapsto f(\bb)
\end{align*}
is a well defined $K$-linear map. In order to investigate this map
we introduce the filtration
\begin{equation*}
    F^iD_{[\rho_1,\rho_2]}(N_0,K,\alpha) := \{ e \in D_{[\rho_1,\rho_2]}(N_0,K,\alpha) : \|e\|_{\rho_1,\rho_2}
    \leq |p|^i \} \qquad\text{for $i \in \mathbb{R}$}
\end{equation*}
on $D_{[\rho_1,\rho_2]}(N_0,K,\alpha)$. Since $K$ is discretely valued and $\rho_1,\rho_2 \in
p^{\mathbb{Q}}$ this filtration is quasi-integral in the sense of
\cite{ST1} \S1. The corresponding graded ring $gr^\cdot
D_{[\rho_1,\rho_2]}(N_0,K,\alpha)$, by Prop.\ A.21 in \cite{SZ}, is commutative. We let
$\sigma(e) \in gr^\cdot D_{[\rho_1,\rho_2]}(N_0,K,\alpha)$ denote the principal
symbol of any element $e \in D_{[\rho_1,\rho_2]}(N_0,K,\alpha)$.

\begin{pro}\label{4}
\begin{itemize}
    \item[i.] $gr^\cdot D_{[\rho_1,\rho_2]}(N_0,K,\alpha)$ is a free $gr^\cdot
K$-module with basis $\{ \sigma(\bb^\bk) : \bk \in
\mathbb{Z}^{\{\alpha\}}\times\mathbb{N}^{\Phi^+\setminus\{\alpha\}} \}$.
    \item[ii.] The map
\begin{align*}
    \mathcal{O}_K(A_\alpha[\rho_1,\rho_2]) & \xrightarrow{\;\cong\;}
    D_{[\rho_1,\rho_2]}(N_0,K,\alpha) \\
    f & \longmapsto f(\bb)
\end{align*}
is a $K$-linear isometric bijection.
\end{itemize}
\end{pro}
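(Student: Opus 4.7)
The plan is to follow the template of Prop.\ A.24 in \cite{SZ}, establishing (i) first and then deducing (ii) from it. The crucial inputs, already cited in the excerpt, are the norm formula $\|\bb^\bk\|_{\rho_1,\rho_2} = \max(\rho_1^\bk,\rho_2^\bk)$ and the commutativity of the graded ring, both following from Prop.\ A.21 (and Lemma A.7.iii) of \cite{SZ}.

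For (i), I would first record that the ordered monomials $\bb^\bk$ with $\bk \in \mathbb{N}^{\Phi^+}$ form an orthogonal $K$-basis of $D_{[0,\rho_2]}(N_0,K)$ with respect to $\|\cdot\|_{\rho_2}$; this is essentially the standard presentation of the distribution algebra of a uniform pro-$p$ group. Consequently $gr^\cdot D_{[0,\rho_2]}(N_0,K)$ is free over $gr^\cdot K$ on the principal symbols $\sigma(\bb^\bk)$ for $\bk \in \mathbb{N}^{\Phi^+}$. Since the generalized microlocalization inverts only $(n_\alpha-1)^i$, and since $\sigma(b_\alpha)$ is a nonzerodivisor in the commutative graded ring, the passage to $gr^\cdot D_{[\rho_1,\rho_2]}(N_0,K,\alpha)$ is an algebraic localization at the central homogeneous element $\sigma(b_\alpha)$. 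This yields the desired $gr^\cdot K$-basis indexed by $\mathbb{Z}^{\{\alpha\}} \times \mathbb{N}^{\Phi^+\setminus\{\alpha\}}$.

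For (ii), the excerpt already establishes well-definedness and $K$-linearity, and the ultrametric triangle inequality gives the bound $\|f(\bb)\|_{\rho_1,\rho_2} \leq \|f\|_{A_\alpha[\rho_1,\rho_2]}$. The reverse inequality I would read off from part (i): any strict inequality would produce a non-trivial $gr^\cdot K$-linear relation among the $\sigma(\bb^\bk)$ at the top filtration step, contradicting freeness. Injectivity is then immediate from the isometry. For surjectivity, every element of $D_{[\rho_1,\rho_2]}(N_0,K,\alpha)$ is represented by a Cauchy sequence $((n_\alpha-1)^{-k_n}x_n)_n$ with $x_n \in D_{[0,\rho_2]}(N_0,K)$; expanding each $x_n$ uniquely in the orthogonal basis $\bb^\bj$ for $\bj \in \mathbb{N}^{\Phi^+}$ and collecting coefficients according to the shift in the $\alpha$-component, I would extract a Laurent series $\sum d_\bk \bb^\bk$ over the claimed index set, whose convergence in $A_\alpha[\rho_1,\rho_2]$ is forced by the uniform Cauchy estimate combined with the isometry just proved.

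The main obstacle I anticipate is the graded-ring identification in (i). One must rule out the possibility that the Banach-theoretic microlocalization introduces relations on the $\sigma(\bb^\bk)$ beyond those in the purely algebraic localization. Equivalently, one needs orthogonality of the monomials $\{\bb^\bk : \bk \in \mathbb{Z}^{\{\alpha\}} \times \mathbb{N}^{\Phi^+\setminus\{\alpha\}}\}$ \emph{after} completion, i.e.\ that a Cauchy sequence representing zero must have every individual monomial coefficient vanishing in the limit. This is precisely what makes the coefficient-extraction in the surjectivity argument well-defined, so establishing it once (by carefully tracking Cauchy sequences through the microlocalization procedure of Appendix A of \cite{SZ}) unlocks both statements simultaneously.
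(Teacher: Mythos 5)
There is a genuine gap, and it sits exactly where you yourself flagged ``the main obstacle'': your proof of (i) rests on the claim that $gr^\cdot D_{[\rho_1,\rho_2]}(N_0,K,\alpha)$ is the algebraic localization of $gr^\cdot D_{[0,\rho_2]}(N_0,K)$ at the central element $\sigma(b_\alpha)$, and that claim is false for the filtration actually in play. The filtration on $D_{[\rho_1,\rho_2]}(N_0,K,\alpha)$ is defined by the norm $\|\cdot\|_{\rho_1,\rho_2}=\max(\|\cdot\|_{\rho_1},\|\cdot\|_{\rho_2})$, which is not multiplicative: $\|b_\alpha\|_{\rho_1,\rho_2}=\rho_2$ while $\|b_\alpha^{-1}\|_{\rho_1,\rho_2}=\rho_1^{-1}$, so $\|b_\alpha\|_{\rho_1,\rho_2}\,\|b_\alpha^{-1}\|_{\rho_1,\rho_2}=\rho_2/\rho_1>1=\|1\|_{\rho_1,\rho_2}$ and hence $\sigma(b_\alpha)\sigma(b_\alpha^{-1})=0$ in the graded ring. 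Thus $\sigma(b_\alpha)$ does not become invertible after microlocalization (the graded ring even acquires zero divisors, since symbols of monomials of opposite total degree multiply to zero), the degrees do not match the naive localization (in the localized ring $\sigma(b_\alpha)^{-1}$ would sit in degree $\rho_2^{-1}$, whereas $\sigma(b_\alpha^{-1})$ sits in degree $\rho_1^{-1}$), and so the graded ring of the generalized microlocalization is \emph{not} $gr^\cdot D_{[0,\rho_2]}(N_0,K)[\sigma(b_\alpha)^{-1}]$; that identification is only available in the single-norm case $\rho_1=\rho_2$. The module-freeness you want is still true, but it has to be proved the way the paper does it: use that elements $b_\alpha^{-l}\mu$ with $\mu\in D_{[0,\rho_2]}(N_0,K)$ are dense, compute explicitly $\|b_\alpha^{-l}\mu\|_{\rho_1,\rho_2}=\max_{\bk}|d_\bk|\,\|b_\alpha^{-l}\bb^{\bk}\|_{\rho_1,\rho_2}$ via Lemma A.7.iii of \cite{SZ} (this simultaneously gives generation of the graded module by the symbols $\sigma(b_\alpha^{-l}\bb^{\bk})$ and, for fixed $l$, their linear independence), and then invoke commutativity of the graded ring (Prop.\ A.21) to permute factors and land on the asserted basis. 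Your proposal defers precisely this content to ``carefully tracking Cauchy sequences through the microlocalization procedure'', which is the statement to be proved, not a proof.

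A secondary weakness is the surjectivity step in (ii). Expanding a representative $b_\alpha^{-k_n}x_n$ and ``collecting coefficients according to the shift in the $\alpha$-component'' is not well defined as stated, because $b_\alpha^{-k_n}\bb^{\bj}$ is not an ordered monomial $\bb^{\bk}$ in the noncommutative ring; rearranging the factors changes the element by terms of strictly smaller norm, so the coefficient extraction requires an iterative approximation argument. Your isometry-by-contradiction argument for the lower bound is fine once (i) is available, but the cleaner route — and the one the paper takes — is to observe that the map $f\mapsto f(\bb)$ is norm-decreasing, induces an isomorphism on associated graded modules (both sides being free on the corresponding monomial symbols), and then to conclude by completeness of the two filtrations that it is an isometric bijection; this packages both the missing surjectivity argument and the isometry in one standard step.
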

\begin{proof}
 Since $\{ b_\alpha^{-l}\mu
: l \geq 0 , \mu \in D_{[0,\rho_1]}(N_0,K) \}$ is dense in $D_{[\rho_1,\rho_2]}(N_0,K,\alpha)$
every element in the graded ring $gr^\cdot D_{[\rho_1,\rho_2]}(N_0,K,\alpha)$ is of
the form $\sigma(b_\alpha^{-l}\mu)$. Suppose that $\mu =
\sum_{\bk\in\mathbb{N}_0^{d}} d_{\bk}\bb^{\bk}$. Then $b_\alpha^{-l}\mu =
\sum_{\bk\in\mathbb{N}_0^{d}}
d_{\bk}b_\alpha^{-l}\bb^{\bk}$ and, using Lemma
A.7.iii \cite{SZ} we compute
\begin{align*}
    \|b_\alpha^{-l}\mu\|_{\rho_1,\rho_2}  = \max(
 \|b_\alpha\|_{\rho_1}^{-l} \|\mu\|_{\rho_1},
 \|b_\alpha\|_{\rho_2}^{-l}\|\mu\|_{\rho_2}) 
     = \max( \max_{\bk \in \mathbb{N}_0^d} |d_\bk| \rho_1^{\bk
    - l}, \max_{\bk \in \mathbb{N}_0^d} |d_\bk| \rho_2^{\bk
    - l}) \\
     = \max_{\bk \in \mathbb{N}_0^d} |d_\bk| \max(\rho_1^{\bk
    - l}, \rho_2^{\bk - l})
     = \max_{\bk \in \mathbb{N}_0^d} |d_\bk|
    |b_\alpha^{-l}\bb^\bk|_{\rho_1,\rho_2} \ .
\end{align*}
It follows that $gr^\cdot D_{[\rho_1,\rho_2]}(N_0,K,\alpha)$ as a $gr^\cdot
K$-module is generated by the principal symbols
$\sigma(b_\alpha^{-l}\bb^\bk)$ with $\bk \in
\mathbb{N}_0^d$, $l\geq 0$. But it also follows that, for a fixed $l\geq
 0$, the principal symbols
$\sigma(b_\alpha^{-l}\bb^\bk)$ with $\bk$ running over
$\mathbb{N}_0^d$ are linearly independent over $gr^\cdot K$. By
Prop.\ A.21 in \cite{SZ} we may permute the factors in
$\sigma(b_\alpha^{-l}\bb^\bk)$ arbitrarily. Hence $gr^\cdot
D_{[\rho_1,\rho_2]}(N_0,K,\alpha)$ is a free $gr^\cdot
K$-module with basis $\{ \sigma(\bb^\bk) : \bk \in
\mathbb{Z}^{\{\alpha\}}\times\mathbb{N}^{\Phi^+\setminus\{\alpha\}} \}$.

On the other hand, we of course have
\begin{align*}
    \|f(\bb)\|_{\rho_1,\rho_2} & \leq \max_{\bk \in \mathbb{Z}^{\{\alpha\}}\times\mathbb{N}^{\Phi^+\setminus\{\alpha\}}}
    |d_\bk||\bb^\bk|_{\rho_1,\rho_2} \\
    & = \max_{\bk \in \mathbb{Z}^{\{\alpha\}}\times\mathbb{N}^{\Phi^+\setminus\{\alpha\}}} |d_\bk|
    \max(\rho_1^\bk,\rho_2^\bk) \\
    & = \max( \max_{\bk \in \mathbb{Z}^{\{\alpha\}}\times\mathbb{N}^{\Phi^+\setminus\{\alpha\}}}
    |d_\bk|\rho_1^\bk, \max_{\bk \in \mathbb{Z}^{\{\alpha\}}\times\mathbb{N}^{\Phi^+\setminus\{\alpha\}}}
    |d_\bk|\rho_2^\bk) \\
    & = |f|_{A_\alpha[\rho_1,\rho_2]} \ .
\end{align*}
This means that if we introduce on $\mathcal{O}_K(A_\alpha[\rho_1,\rho_2])$
the filtration defined by the spectral norm then the asserted map
respects the filtrations, and by the above reasoning it induces an
isomorphism between the associated graded rings. Hence, by
completeness of these filtrations, it is an isometric bijection.
\end{proof}

Now we turn to the construction of $\mathcal{R}(N_1,\ell)$. The problem with
(na\"ive) microlocalization is that the ring $\mathfrak{R}_0$
is not finitely generated over $\varphi(\mathfrak{R}_0)$. The reason for
this is that $\varphi$ improves the order of convergence for a power series in
$\mathfrak{R}_0$. In the case $G\neq \GL_2(\mathbb{Q}_p)$ the operator
$\varphi=\varphi_s$ acts by conjugation on $N_{\beta}$ 
by raising to the $\beta(s)$-th power. Whenever $\beta\in \Phi^+\setminus
\Delta$ is not a simple root then
$\beta(s)=p^{m_\beta}>\alpha(s)=p$ where $m_\beta$ is the degree
of the map $\beta\circ\xi\colon\mathbb{G}_m\to\mathbb{G}_m$. 
\begin{lem}\label{trivest}
We have $\|b_\beta\|_\rho=\|b_\alpha\|_\rho=\rho$ and
\begin{equation*}
\|\varphi(b_\beta)\|_\rho=\max_{0\leq j\leq
 m_{\beta}}(\rho^{p^j}p^{j-m_\beta})<\max(\rho^p,p^{-1}\rho)=\|\varphi(b_\alpha)\|_\rho
\end{equation*}
for any $p^{-1}<\rho<1$. In general, we have
 $\|\varphi_t(b_\beta)\|_\rho=\max\limits_{0\leq j\leq \mathrm{val}_p(\beta(t))}(\rho^{p^j}p^{j-\mathrm{val}_p(\beta(t))})$.
\end{lem}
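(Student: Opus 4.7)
The first equality $\|b_\alpha\|_\rho = \|b_\beta\|_\rho = \rho$ is immediate from the definition of $\|\cdot\|_\rho$ on $D(N_0,K)$. For the rest, my plan is to compute $\varphi_t(b_\beta)$ explicitly as a power series in $b_\beta$ and then invoke the fact that the graded ring for the filtration defined by $\|\cdot\|_\rho$ on $D(N_0,K)$ is a free $\gr K$-module with basis $\{\sigma(\bb^{\bk})\}_{\bk \in \mathbb{N}^{\Phi^+}}$; this is the non-microlocalized analogue of Proposition \ref{4}, with a strictly easier proof along the lines of Prop.\ A.21 in \cite{SZ} applied to $D_{[0,\rho]}(N_0,K)$.

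Since $t \in T_+$ normalizes $N_\beta$ and acts on $\iota_\beta(N_\beta) \cong \mathbb{Q}_p$ by multiplication by $\beta(t)$, we have $\varphi_t(n_\beta) = n_\beta^{\beta(t)}$ and hence $\varphi_t(b_\beta) = (1+b_\beta)^{\beta(t)} - 1$. Write $\beta(t) = p^e u$ with $e := \val_p(\beta(t))$ and $u \in \mathbb{Z}_p^{\times}$. Setting $b' := (1+b_\beta)^u - 1$, the element $b'$ is another topological generator of $N_0 \cap N_\beta$, and the substitution $b_\beta \mapsto b'$ is an isometric ring automorphism of $D(N_0\cap N_\beta, K)$ with respect to $\|\cdot\|_\rho$; in particular $\|(b')^k\|_\rho = \rho^k$. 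Expanding
\[
\varphi_t(b_\beta) = (1+b')^{p^e} - 1 = \sum_{k=1}^{p^e}\binom{p^e}{k}(b')^k,
\]
the graded-basis property then yields $\|\varphi_t(b_\beta)\|_\rho = \max_{1\leq k\leq p^e}\bigl|\binom{p^e}{k}\bigr|\rho^k$.

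Next I invoke the standard identity $\val_p\bigl(\binom{p^e}{k}\bigr) = e - \val_p(k)$ for $1 \leq k \leq p^e$, which follows from $\binom{p^e}{k} = \frac{p^e}{k}\binom{p^e-1}{k-1}$ together with the fact that $\binom{p^e-1}{k-1}$ is a $p$-adic unit (each factor $\frac{p^e-i}{i}$ with $1\leq i\leq k-1\leq p^e-1$ has $\val_p$ numerator equal to $\val_p$ denominator). So I need to maximize $p^{\val_p(k)-e}\rho^k$ over $1 \leq k \leq p^e$. For fixed valuation $\val_p(k) = j$, the expression is maximized by the smallest admissible $k$, namely $k = p^j$, which gives the asserted formula
\[
\|\varphi_t(b_\beta)\|_\rho = \max_{0\leq j\leq e}p^{j-e}\rho^{p^j}.
\]

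Finally, specializing $t=s$ gives $e = m_\beta$, and taking further $\beta = \alpha$ (so $m_\alpha = 1$) recovers $\|\varphi(b_\alpha)\|_\rho = \max(p^{-1}\rho, \rho^p)$. The strict inequality for $m_\beta \geq 2$ is a term-by-term check using $\rho < 1$ and $p^{-1} < 1$: the $j=0$ term is $\leq p^{-2}\rho < p^{-1}\rho$; the $j=1$ term is $\leq p^{-1}\rho^p < \rho^p$; and for $j \geq 2$ we have $p^{j-m_\beta}\rho^{p^j} \leq \rho^{p^j} < \rho^p$. The main conceptual step is the reduction to a single-variable binomial expansion via the isometric substitution $b_\beta \mapsto b'$; after that, everything reduces to a standard $p$-adic valuation estimate for binomials.
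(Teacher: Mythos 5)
Your proof is correct and follows essentially the same route as the paper: expand $\varphi_t(b_\beta)=(1+b_\beta)^{\beta(t)}-1$ binomially and evaluate $\|\cdot\|_\rho$ term by term via the multiplicativity/orthogonality of the $\rho$-norm and the $p$-adic valuation of the binomial coefficients. The only deviation is your preliminary isometric substitution $b_\beta\mapsto(1+b_\beta)^u-1$ reducing to the integer binomials $\binom{p^e}{k}$; the paper instead works directly with $\binom{\beta(t)}{i}\in\mathbb{Z}_p$ and the estimate $\val_p\binom{\beta(t)}{i}\geq\val_p(\beta(t))-\val_p(i)$ (with equality at $i=p^j$ by the same unit argument you use), so your extra step is harmless but not needed.
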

\begin{proof}
We compute 
\begin{equation*}
\|\varphi_t(b_\beta)\|_\rho=\|(1+b_\beta)^{\beta(t)}-1\|_\rho=\|\sum_{i=1}^\infty \binom{\beta(t)}{i}b_\beta^{i}\|_\rho=\max_{0\leq j\leq \val_p(\beta(t))}(\rho^{p^j}p^{j-\mathrm{val}_p(\beta(t))})\
 .
\end{equation*}
Here we use the trivial estimate
 $\mathrm{val}_p\binom{n}{k}=\val_p(\frac{n}{k}\binom{n-1}{k-1})\geq
 \mathrm{val}_p(n)-\val_p(k)$ for $n:=\beta(t)\in\mathbb{Z}_p$ and $k\in
 \mathbb{N}$. We see immediately that whenever
 $m_\beta>1$ then 
 $\rho^{p^{j}}p^{j-m_\beta}<\rho^p$ for $1\leq j\leq m_\beta$ and
$p^{-m_\beta}\rho<p^{-1}\rho$.
\end{proof}

Now choose an ordering $<$ on $\Phi^+$ such that $(i)$ $m_{\beta_1}<m_{\beta_2}$
implies $\beta_1>\beta_2$ and $(ii)$ $\alpha>\beta$ for any $\alpha\neq
\beta,\beta_1,\beta_2\in\Phi^+$. Then by Prop.\ \ref{4} any element in
$\mathfrak{R}_0$ has a skew Laurent-series expansion
\begin{equation*}
f({\bf b})=\sum_{{\bf
    k}\in\mathbb{Z}^{\{\alpha\}}\times\mathbb{N}^{\Phi^+\setminus\{\alpha\}}}c_{\bf k}{\bf b}^{\bf k}
\end{equation*}
such that there exists $p^{-1}<\rho<1$ such that for all $\rho<\rho_1<1$ we
have $|c_{\bf k}|_p\rho_1^{\sum k_\beta}\to0$ as $\sum |k_\beta|\to\infty$. By Lemma
\ref{trivest} and the discussion above we clearly have the following 

\begin{exm}\label{ex}
Let $\beta\in\Phi^+\setminus\Delta$ be a non-simple root. Then the series
$\sum_{n=1}^\infty b_{\beta}^nb_\alpha^{-n}$ does not belong to
$\mathfrak{R}_0(N_0)$. However, the series
$\sum_{n=1}^\infty\varphi(b_\beta^nb_\alpha^{-n})$ converges in each
$D_{[\rho_1,\rho_2]}(N_0,\alpha)$ (for arbitrary $p^{-1}<\rho_1<\rho_2<1$)
hence defines an element in $\mathfrak{R}_0(N_0)$. Therefore we cannot have a continuous left inverse $\psi$ to $\varphi$ on $\mathfrak{R}_0(N_0)$ as otherwise $\psi(\sum_{n=1}^{\infty}\varphi(b_{\beta}^nb_\alpha^{-n}))=\sum_{n=1}^\infty b_{\beta}^nb_\alpha^{-n}$ would converge. In particular, we cannot write $\mathfrak{R}_0(N_0)$ as the topological direct sum $ \bigoplus_{u\in N_0/\varphi(N_0)}u\varphi(\mathfrak{R}_0(N_0))$ of closed subspaces in $\mathfrak{R}_0(N_0)$ as otherwise the operator 
\begin{eqnarray*}
\psi\colon \mathfrak{R}_0(N_0)&\to& \mathfrak{R}_0(N_0)\\
\sum_{u\in J(N_0/\varphi(N_0))}u\varphi(f_u)&\mapsto& \varphi^{-1}(u_0)f_{u_0}
\end{eqnarray*}
for the unique $u_0\in J(N_0/\varphi(N_0))\cap \varphi(N_0)$ would be a continuous left inverse to $\varphi$. In fact, we even have $\mathfrak{R}_0(N_0)\neq \bigoplus_{u\in N_0/\varphi(N_0)}u\varphi(\mathfrak{R}_0(N_0))$ algebraically, however, the proof of this requires the forthcoming machinery (see Remark \ref{notetale}).
\end{exm}

In order to overcome the above counter-example we are going to consider the
ring $\mathcal{R}(N_1,\ell)$ of all the skew power series of the form $f({\bf b})$
such that $f(\varphi_t({\bf b}))$ is convergent in $\mathfrak{R}_0$ for some
$t\in T_+$. A priori it is not clear that these series form a
ring, so we are going to give a more conceptual construction. 

Take an arbitrary element $t\in T_+$. The conjugation by $t$ on $N_0$ gives an isomorphism $\varphi_t\colon
N_0\to\varphi_t(N_0)$ of pro-$p$ groups (since it is injective). Hence
$\varphi_t(N_0)$ is also a uniform pro-$p$ group with minimal set of generators
$\{\varphi_t(n_\beta)\}_{\beta\in\Phi^+}$. So we may define the distribution
algebra $D(\varphi_t(N_0)):=D(\varphi_t(N_0),K)$. The inclusion
$\varphi_t(N_0)\hookrightarrow N_0$ induces an injective homomorphism of
Fr\'echet-algebras $\iota_{1,t}\colon D(\varphi_t(N_0))\hookrightarrow
D(N_0)$. It is well-known \cite{ST1} 
that we have
\begin{equation*}
D(N_0)=\bigoplus_{n\in J(N_0/\varphi_t(N_0))}n\iota_{1,t}(D(\varphi_t(N_0)))
\end{equation*}
as right $D(\varphi_t(N_0))$-modules. Moreover, the direct summands are closed
in $D(N_0)$. For each real number $p^{-1}<\rho<1$ the
$\rho$-norm on $D(N_0)$ defines a norm $r_t(\rho)$ on $D(\varphi_t(N_0))$ by
restriction. Note that this is different from the $\rho$-norm on
$D(\varphi_t(N_0))$ (using the uniform structure on $\varphi_t(N_0)$). However,
the family $(r_t(\rho))_\rho$ of norms defines the Fr\'echet topology on
$D(\varphi_t(N_0))$. On the other hand, whenever $r$ is a norm on
$D(\varphi_t(N_0))$ then we may extend $r$ to a norm $q_t(r)$ on $D(N_0)$ by
putting 
\begin{equation*}
\|\sum_{n\in J(N_0/\varphi_t(N_0))}n\iota_{1,t}(x_n)\|_{q_t(r)}:=\max(\|x_n\|_r).
\end{equation*}
These norms define the Fr\'echet topology on $D(N_0)$. More precisely, if $\beta(t)=p^{m(\beta,t)}u(\beta,t)$ with
$m(\beta,t):=\val_p(\beta(t))\geq 0$ integer and 
$u(\beta,t)\in\mathbb{Z}_p^{\times}$, then we have

\begin{lem}\label{equivnorm}
\begin{equation*}
\|x\|_\rho\leq \|x\|_{q_t(r_t(\rho))}\leq \rho^{-\sum_{\beta\in\Phi^+}(p^{m(\beta,t)}-1)}\|x\|_\rho
\end{equation*}
for any $p^{-\frac{1}{\max_{\beta\in\Phi^+}p^{m(\beta,t)}}}<\rho<1$ and $x\in D(N_0)$. In particular, the norms $\rho$ and
$q_t(r_t(\rho))$ define the same topology.
\end{lem}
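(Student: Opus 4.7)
My plan is to prove the two inequalities separately, the first being essentially immediate from the multiplicativity of $\|\cdot\|_\rho$, and the second requiring an explicit change-of-basis computation that is the main obstacle.

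For the left-hand inequality, the key observation is that $\|n\|_\rho = 1$ for every $n \in N_0$. Indeed, writing $n = 1 + (n-1)$ with $n-1$ in the ideal generated by the $b_\beta$, we have $\|n - 1\|_\rho \leq \rho < 1$, so $\|n\|_\rho \leq 1$; the symmetric bound on $n^{-1}$ together with multiplicativity of $\|\cdot\|_\rho$ (a consequence of Prop.\ A.21 in \cite{SZ}) forces $\|n\|_\rho = 1$. Then for $x = \sum_n n\, \iota_{1,t}(x_n)$, the ultrametric inequality gives
\begin{equation*}
\|x\|_\rho \leq \max_n \|n\|_\rho \cdot \|\iota_{1,t}(x_n)\|_\rho = \max_n r_t(\rho)(x_n) = \|x\|_{q_t(r_t(\rho))}.
\end{equation*}

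For the right-hand inequality, I would reduce to the case of pure ordered monomials. Concretely, expand $x = \sum_{\bf k} c_{\bf k} {\bf b}^{\bf k}$ with $\|x\|_\rho = \max |c_{\bf k}|\rho^{|{\bf k}|}$, and for each ${\bf k}$ compute the unique decomposition ${\bf b}^{\bf k} = \sum_{n\in J} n\,\iota_{1,t}(y_{n,{\bf k}})$; it suffices to show $\|\iota_{1,t}(y_{n,{\bf k}})\|_\rho \leq \rho^{-\sum_\beta(p^{m(\beta,t)}-1)}\rho^{|{\bf k}|}$, because substituting and applying the ultrametric inequality delivers the general bound. I choose coset representatives $J(N_0/\varphi_t(N_0)) = \{\prod_\beta n_\beta^{i_\beta} : 0 \leq i_\beta < p^{m(\beta,t)}\}$ (for the fixed ordering of $\Phi^+$); their number is $\prod_\beta p^{m(\beta,t)} = [N_0:\varphi_t(N_0)]$, and distinctness of cosets follows from uniformity of $N_0$ together with $\varphi_t(n_\beta) = n_\beta^{\beta(t)} = n_\beta^{p^{m(\beta,t)} u(\beta,t)}$. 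For each $\beta$, divide $k_\beta = q_\beta p^{m(\beta,t)} + r_\beta$ with $0 \leq r_\beta < p^{m(\beta,t)}$, so that $n_\beta^{k_\beta} = n_\beta^{r_\beta}\,\varphi_t(n_\beta)^{q_\beta u(\beta,t)^{-1}}$. Expanding $b_\beta^{k_\beta} = (n_\beta - 1)^{k_\beta}$ via the binomial theorem produces a sum of products $n^{\bf i}\,\varphi_t({\bf b})^{{\bf k}'}$ plus commutator corrections of strictly lower $\rho$-degree, and the coefficients involved are controlled by binomial factors whose worst $p$-adic denominator is exactly $p^{-\sum_\beta(p^{m(\beta,t)}-1)}$ (mirroring the exact computation that $\|b_\beta^{p^{m(\beta,t)}-1}\|_{q_t(r_t(\rho))} = 1$ while $\|b_\beta^{p^{m(\beta,t)}-1}\|_\rho = \rho^{p^{m(\beta,t)}-1}$).

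The hard part will be handling the non-commutativity of $N_0$, which causes the rearrangements above to generate commutator terms. The clean way to deal with this is to pass to the associated graded ring $\mathrm{gr}^\cdot_\rho D(N_0)$, which is commutative by Prop.\ A.21 in \cite{SZ}, and in which $\sigma(\varphi_t(b_\beta)) = \sigma(b_\beta)^{p^{m(\beta,t)}}$ (valid for $\rho$ in the stated range, where Lemma \ref{trivest} shows the maximum defining $\|\varphi_t(b_\beta)\|_\rho$ is attained at the top index $j = m(\beta,t)$). In the commutative graded setting, the change of basis from $\{\sigma({\bf b}^{\bf k})\}$ to $\{\sigma(n^{\bf i})\sigma(\varphi_t({\bf b})^{{\bf k}'})\}$ is an explicit monomial transformation whose inverse introduces exactly the claimed factor; because the filtration is exhaustive and separated, the norm estimate lifts from the graded ring back to $D(N_0)$. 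Finally, the equivalence of the topologies defined by $\|\cdot\|_\rho$ and $q_t(r_t(\rho))$ as $\rho$ varies follows at once from the two-sided bound, since the bounding constants depend only on $\rho$ and $t$, not on $x$.
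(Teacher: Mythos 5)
Your left-hand inequality and the reduction of the right-hand inequality to monomials $\mathbf{b}^{\bf k}$ are fine and match the paper. The gap is in the mechanism you propose for the monomial estimate. First, the constant does not come from ``$p$-adic denominators'': the components of $\mathbf{b}^{\bf k}$ with respect to the coset decomposition have \emph{integral} coefficients, and a term-by-term binomial bound only yields $\|b_\beta^{k_\beta}\|_{q_t(r_t(\rho))}\leq 1$, which is far weaker than the needed $\rho^{-(p^{m(\beta,t)}-1)}\rho^{k_\beta}$ when $k_\beta$ is large. The loss is a power of $\rho$ coming from a degree mismatch, and to beat the trivial bound one must show there is \emph{no} loss on full $p^{m(\beta,t)}$-blocks, i.e.\ that $\|b_\beta^{p^{m(\beta,t)}}\|_{q_t(r_t(\rho))}=\rho^{p^{m(\beta,t)}}$ exactly; this is where the hypothesis $\rho^{p^{m(\beta,t)}}>p^{-1}$ genuinely enters, via a unit $y\in\Lambda(N_{0,\beta})$ with $y\varphi_t(b_\beta)\equiv\binom{\beta(t)}{p^{m(\beta,t)}}b_\beta^{p^{m(\beta,t)}}\pmod{p}$. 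Your sketch never establishes this, and without it the per-variable estimate (the paper's \eqref{est}) is not available. Second, the graded-ring device cannot substitute for it: in $\gr_\rho D(N_0)$ every group element has principal symbol $1$, so $\sigma(n^{\bf i})\sigma(\varphi_t(\mathbf{b})^{{\bf k}'})=\sigma(\varphi_t(\mathbf{b})^{{\bf k}'})$ and your proposed target family is not a basis --- the coset decomposition is invisible at the graded level. More fundamentally, the quantity you must control, the $q_t(r_t(\rho))$-norm of the components (equivalently of the commutator corrections), is not a graded invariant of the $\rho$-filtration, so ``the estimate lifts because the filtration is exhaustive and separated'' has no force here: the corrections have small $\rho$-norm, but bounding their $q_t(r_t(\rho))$-norm is exactly the problem, and iterating the decomposition on them needs a convergence/induction scheme you have not supplied.

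For comparison, the paper handles the multivariable bookkeeping by induction on the largest root $\eta$ occurring (with respect to the ordering fixed after Lemma \ref{trivest}): it writes $b_\eta^{k_\eta}=\sum_{j_\eta}n_\eta^{j_\eta}f_{{\bf k},j_\eta}(\varphi_t(b_\eta))$, moves $n_\eta^{j_\eta}$ past the prefix $\prod_{\beta<\eta}b_\beta^{k_\beta}$ using that $\prod_{\beta<\eta}N_{0,\beta}$ is normal in $N_0$ and that conjugation preserves $\|\cdot\|_\rho$, and then combines the one-variable estimate \eqref{est} with the identity $\|a\,\iota_{1,t}(b)\|_{q_t(r_t(\rho))}=\|a\|_{q_t(r_t(\rho))}\|b\|_{r_t(\rho)}$ (multiplicativity of $r_t(\rho)$ on $D(\varphi_t(N_0))$) and the inductive hypothesis. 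Some such structure --- or another way to control the corrections in the $q_t(r_t(\rho))$-norm --- is what your argument is missing.
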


\begin{proof}
The inequality on the left is clear from the triangle inequality. For
 the other inequality note that our assumption on $\rho$ implies in
 particular that 
\begin{equation*}
\rho^{p^{m(\beta,t)}}= \rho^{p^j}\rho^{p^{m(\beta,t)}-p^j}>\rho^{p^j}p^{-\frac{p^{m(\beta,t)}-p^j}{p^{m(\beta,t)}}}>\rho^{p^j}p^{j-m(\beta,t)}
\end{equation*}
for all $0\leq j<m(\beta,t)$. Hence by Lemma \ref{trivest}, we have
 $\rho^{p^{m(\beta,t)}}=\|\binom{\beta(t)}{p^{m(\beta,t)}}b_\beta^{p^{m(\beta,t)}}\|_\rho=\|\varphi_t(b_\beta)\|_\rho$. Moreover,
 there exists an \emph{invertible} element $y$ in the Iwasawa algebra
 $\Lambda(N_{0,\beta})$ such that
 $y\varphi_t(b_\beta)\equiv\binom{\beta(t)}{p^{m(\beta,t)}}b_\beta^{p^{m(\beta,t)}}\pmod{p}$
 (as both sides have the same principal term). However,
 by the choice of $\rho$,
 $|p|=1/p<\rho^{p^{m(\beta,t)}}=\|\varphi_t(b_\beta)\|_\rho=\|\varphi_t(b_\beta)\|_{q_t(r_t(\rho))}$. Therefore
 we also have 
\begin{align*}
\rho^{p^{m(\beta,t)}}=\|b_{\beta}^{p^{m(\beta,t)}}\|_\rho=\|\binom{\beta(t)}{p^{m(\beta,t)}}b_\beta^{p^{m(\beta,t)}}\|_\rho=\|\varphi_t(b_{\beta})\|_\rho=
\|\varphi_t(b_\beta)\|_{q_t(r_t(\rho))}=\\
=\|y\varphi_t(b_\beta)\|_{q_t(r_t(\rho))}=\|\binom{\beta(t)}{p^{m(\beta,t)}}b_\beta^{p^{m(\beta,t)}}\|_{q_t(r_t(\rho))}=\|b_{\beta}^{p^{m(\beta,t)}}\|_{q_t(r_t(\rho))}
\end{align*}
whence 
\begin{equation}
\|b_\beta^{k_\beta}\|_{q_t(r_t(\rho))}=\|b_\beta^{p^{m(\beta,t)}}\|_{q_t(r_t(\rho))}^{k_{1,\beta}}\|b_\beta^{k_{2,\beta}}\|_{q_t(r_t(\rho))}\leq
\rho^{k_{1,\beta}p^{m(\beta,t)}}\leq\rho^{-p^{m(\beta,t)}+1}\|b_\beta^{k_\beta}\|_\rho\label{est}
\end{equation}
where $k_\beta=p^{m(\beta,t)}k_{1,\beta}+k_{2,\beta}$ with $0\leq
 k_{2,\beta}\leq p^{m(\beta,t)}-1$ and $k_{1,\beta}$ nonnegative integers.

Now consider an element of $D(N_0)$ of the form
\begin{equation*}
x=\sum_{{\bf k}=(k_\beta)\in\mathbb{N}^{\Phi^+}}c_{\bf k}\prod_{\beta\in\Phi^+}b_\beta^{k_\beta}\ . 
\end{equation*}
We may assume without loss of generality that
$J(N_0/\varphi_t(N_0))=\{\prod_{\beta\in\Phi^+}n_\beta^{j_\beta}\mid 0\leq j_\beta\leq p^{m(\beta,t)}-1\}$
where the product is taken in the reversed order. Let
$\eta\in\Phi^+$ be the largest root (with respect to the ordering $<$
defined after Lemma \ref{trivest}) such
that there exists a ${\bf k}\in\mathbb{N}^{\Phi^+}$ with $c_{\bf k}\neq 0$
and $k_{\eta}\neq0$. We are going to show the estimate
\begin{equation*}
\|x\|_{q_t(r_t(\rho))}\leq \rho^{-\sum_{\beta\leq\eta}(p^{m(\beta,t)}-1)}\|x\|_\rho
\end{equation*}
by induction on $\eta$. This induction has in fact finitely many steps since
$|\Phi^+|<\infty$. At first we write
$b_{\eta}^{k_{\eta}}=\sum_{j_{\eta}=0}^{p^{m(\eta,t)}-1}n_\eta^{j_\eta}f_{{\bf
  k},j_\eta}(\varphi_t(b_\eta))$ for each ${\bf k}\in\mathbb{N}^{\Phi^+}$. Note
that---by the choice of the ordering on $\Phi^+$---for any fixed $\eta$ the set
$\prod_{\beta<\eta}N_{0,\beta}$ is a normal subgroup of $N_0$. Moreover,
the conjugation by any element of $N_0$ preserves the $\rho$-norm on
$D(N_0)$. Therefore we may write
\begin{equation*}
\prod_{\beta\leq\eta}b_\beta^{k_\beta}=\sum_{j_\eta=0}^{p^{m(\eta,t)}-1}n_{\eta}^{j_\eta}x_{{\bf
  k},j_{\eta}}f_{{\bf k},j_\eta}(\varphi_t(b_\eta))
\end{equation*}
such that 
\begin{equation*}
x_{{\bf
    k},j_\eta}:=n_\eta^{-j_\eta}\left(\prod_{\beta<\eta}b_\beta^{k_\beta}\right)
n_\eta^{j_\eta}\in D(\prod_{\beta<\eta}N_{0,\beta})\ .
\end{equation*}
By \eqref{est} we have
\begin{equation*}
\|f_{{\bf k},j_\eta}(\varphi_t(b_\eta))\|_{q_t(r_t(\rho))}=\|f_{{\bf k},j_\eta}(\varphi_t(b_\eta))\|_\rho
\leq \|b_\eta^{k_\eta}\|_{q_t(r_t(\rho))}\leq
\rho^{-p^{m(\eta,t)}+1}\|b_\eta^{k_\eta}\|_\rho\ .
\end{equation*}
Since the $r_t(\rho)$-norm is multiplicative on $D(\varphi_t(N_0))$, for any $a\in D(N_0)$ and $b\in D(\varphi_t(N_0))$ we
 also have $\|a\iota_{1,t}(b)\|_{q_t(r_t(\rho))}=\|a\|_{q_t(r_t(\rho))}\|b\|_{r_t(\rho)}$. Indeed, if we
 decompose $a$ as $a=\sum_{n\in J(N_0/\varphi_t(N_0))}n\iota_{1,t}(a_n)$ then
 we have $a\iota_{1,t}(b)=\sum_{n\in
 J(N_0/\varphi_t(N_0))}n\iota_{1,t}(a_nb)$. Now $f_{{\bf 
    k},j_\eta}(\varphi_t(b_\eta))$ lies in $\iota_{1,t}(D(\varphi_t(N_0)))$, so we see that
\begin{equation*}
\|x_{{\bf k},j_\eta}f_{{\bf
    k},j_\eta}(\varphi_t(b_\eta))\|_{q_t(r_t(\rho))}=\|x_{{\bf
    k},j_\eta}\|_{q_t(r_t(\rho))}\|f_{{\bf
    k},j_\eta}(\varphi_t(b_\eta))\|_{q_t(r_t(\rho))}\ .
\end{equation*}
On the other hand, the inductional hypothesis tells us that
\begin{equation*}
\|x_{{\bf k},j_\eta}\|_{q_t(r_t(\rho))}\leq
\rho^{-\sum_{\beta<\eta}(p^{m(\beta,t)}-1)}\|x_{{\bf
    k},j_\eta}\|_\rho=\rho^{-\sum_{\beta<\eta}(p^{m(\beta,t)}-1)}\|\prod_{\beta<\eta}b_\beta^{k_\beta}\|_\rho\ . 
\end{equation*}
Hence we compute 
\begin{align*}
\|x\|_{q_t(r_t(\rho))}=\|\sum_{\bf k}c_{\bf k}\sum_{j_\eta=0}^{p^{m(\eta,t)}-1}n_{\eta}^{j_\eta}x_{{\bf
  k},j_{\eta}}f_{{\bf
    k},j_\eta}(\varphi_t(b_\eta))\|_{q_t(r_t(\rho))}\leq\\
\leq\max_{{\bf k},j_\eta}\left(|c_{\bf
k}|\|x_{{\bf
    k},j_{\eta}}f_{{\bf k},j_\eta}(\varphi_t(b_\eta))\|_{q_t(r_t(\rho))}\right)\leq\\
\leq \max_{\bf k}\left(|c_{\bf
  k}|\rho^{-\sum_{\beta<\eta}(p^{m(\beta,t)}-1)}\|\prod_{\beta<\eta}b_\beta^{k_\beta}\|_\rho\rho^{-p^{m(\eta,t)}+1}\|b_\eta^{k_\eta}\|_\rho\right)
=\rho^{-\sum_{\beta\leq\eta}(p^{m(\beta,t)}-1)}\|x\|_\rho\ .
\end{align*}
\end{proof}

In particular, for each
$p^{-\frac{1}{\max_{\beta}p^{m(\beta,t)}}}<\rho<1$ the completion of
$D(N_0)$ with respect to the topology defined by $\|\cdot\|_\rho$ and by
$\|\cdot\|_{q_t(r_t(\rho))}$ are the same, ie.\
\begin{equation}\label{dec0}
D_{[0,\rho]}(N_0)= \bigoplus_{n\in
  J(N_0/\varphi_t(N_0))}n\iota_{1,t}(D_{r_t([0,\rho])}(\varphi_t(N_0)))
\end{equation}
where $D_{r_t([0,\rho])}(\varphi_t(N_0))$ denotes the completion of
$D(\varphi_t(N_0))$ with respect to the norm $r_t(\rho)$.

Now we turn to the microlocalization and first of all note that
$\varphi_t(b_\alpha)=(b_\alpha+1)^{\alpha(t)}-1$ is divisible by $b_\alpha$. So 
if $\varphi_t(b_\alpha)$ is invertible in a ring then so is $b_\alpha$. On the
other hand, if $p^{-\frac{1}{p^{m(\alpha,t)}}}<\rho<1$ then by Lemma \ref{trivest} we have
\begin{equation*}
\|\varphi_t(b_\alpha)-\binom{\alpha(t)}{p^{m(\alpha,t)}}b_\alpha^{p^{m(\alpha,t)}}\|_\rho<\|\binom{\alpha(t)}{p^{m(\alpha,t)}}b_\alpha^{p^{m(\alpha,t)}}\|_\rho\ .
\end{equation*}
Hence $\varphi_t(b_\alpha)$ is invertible in the Banach algebra
$D_{[\rho_1,\rho_2]}(N_0,\alpha)$ for any
$p^{-\frac{1}{p^{m(\alpha,t)}}}<\rho_1<\rho_2<1$ since it is close to
the invertible element
$\binom{\alpha(t)}{p^{m(\alpha,t)}}b_\alpha^{p^{m(\alpha,t)}}$ (as the
binomial coefficient $\binom{\alpha(t)}{p^{m(\alpha,t)}}$ is not
divisible by $p$). This shows that the microlocalisation of $D_{[0,\rho_2]}(N_0)$
with respect to the multiplicative set $\varphi_t(b_\alpha)^{\mathbb{N}}$ and
norm $\max(\rho_1,\rho_2)$ equals $D_{[\rho_1,\rho_2]}(N_0,\alpha)$. Therefore
for each $p^{-\frac{1}{p^{m(\alpha,t)}}}<\rho_1<\rho_2<1$ we obtain
\begin{equation*}
D_{[\rho_1,\rho_2]}(N_0,\alpha)= \bigoplus_{n\in
  J(N_0/\varphi_t(N_0))}n\iota_{0,1}(D_{r_t([\rho_1,\rho_2])}(\varphi_t(N_0),\alpha))
\end{equation*}
by microlocalizing both sides of \eqref{dec0}. Now letting $\rho_2$ tend to $1$ and then also $\rho_1\to 1$ we get
\begin{equation}
\mathfrak{R}_0(N_0,\alpha)=\bigoplus_{n\in
  J(N_0/\varphi_t(N_0))}n\iota_{1,t}(\mathfrak{R}_{0,r_t(\cdot)}(\varphi_t(N_0),\alpha))\label{dec1}
\end{equation}
for all $t\in T_+$. Here we define
\begin{equation*}
\mathfrak{R}_{0,r_t(\cdot)}(\varphi_t(N_0),\alpha):=\varinjlim_{\rho_1\to
  1}\varprojlim_{\rho_2\to 1}D_{r_t([\rho_1,r(\rho_2)])}(\varphi_t(N_0),\alpha)
\end{equation*}
which is in general different from $\mathfrak{R}_0(\varphi_t(N_0),\alpha)$ (in which by
definition we use norms $\rho$ such that
$\|\varphi_t(b_\beta)\|_\rho=\|\varphi_t(b_\alpha)\|_\rho$) by Example
\ref{ex}. Indeed, for $t=s$ the sum $\sum_{n=1}^{\infty}\varphi(b_\beta^nb_\alpha^{-n})$ converges in $\mathfrak{R}_{0,r_t(\cdot)}(\varphi_t(N_0),\alpha)$, but not in $\mathfrak{R}_0(\varphi_t(N_0),\alpha)$.

By the entirely same proof we also obtain
\begin{equation}
\mathfrak{R}_{0,r_{t_1}(\cdot)}(\varphi_{t_1}(N_0),\alpha)=\bigoplus_{n\in
  J(\varphi_{t_1}(N_0)/\varphi_{t_1t_2}(N_0))}n\iota_{t_1,t_1t_2}(\mathfrak{R}_{0,r_{t_1t_2}(\cdot)}(\varphi_{t_1t_2}(N_0),\alpha))\label{dec2}
\end{equation}
for each pair $t_1,t_2\in T_+$ where $\iota_{t_1,t_1t_2}$ is the
inclusion of the rings above induced by the natural inclusion
$\varphi_{t_1t_2}(N_0)\hookrightarrow \varphi_{t_1}(N_0)$.

Now we would like to define continuous homomorphisms
\begin{eqnarray*}
\varphi_{t_2t_1,t_1}\colon
\mathfrak{R}_{0,r_{t_1}(\cdot)}(\varphi_{t_1}(N_0),\alpha)&\to&
\mathfrak{R}_{0,r_{t_1t_2}(\cdot)}(\varphi_{t_1t_2}(N_0),\alpha)\\
\varphi_{t_1}(b_\beta)&\mapsto&\varphi_{t_1t_2}(b_\beta)
\end{eqnarray*}
induced by the group isomorphism $\varphi_{t_2}\colon \varphi_{t_1}(N_0)\to
\varphi_{t_1t_2}(N_0)$ so that we can take the injective limit 
\begin{equation*}
\mathfrak{R}(N_1,\ell):=\varinjlim_{t}\mathfrak{R}_{0,r_t(\cdot)}(\varphi_t(N_0),\alpha)
\end{equation*}
with respect to the maps $\varphi_{t_2t_1,t_1}$. This is not possible for all $t_2$
since the map $\varphi_{t_2}$ will not always be norm-decreasing on
monomials $\bb^{\bk}$ for
$\bk\in\mathbb{Z}^{\{\alpha\}}\times\mathbb{N}^{\Phi^+\setminus\{\alpha\}}$.
To overcome this we define the 
pre-ordering $\leq_\alpha$ (depending on the choice of the simple root $\alpha$)
on $T_+$ the following way: $t_1\leq_\alpha t_2$ if and only if
$|\beta(t_2t_1^{-1})|\leq |\alpha(t_2t_1^{-1})|\leq 1$ for all
$\beta\in\Phi^+$. (In other words if and only if we have
$m(\beta,t_2t_1^{-1})\geq m(\alpha,t_2t_1^{-1})\geq 0$.) In particular, $t_1\leq_\alpha t_2$ implies $t_2t_1^{-1}\in
T_+$ and it is equivalent to $1\leq_\alpha t_2t_1^{-1}$. We also have
$1\leq_\alpha s$ for any $\alpha\in \Delta$. It is clear that $\leq_\alpha$ is
transitive and reflexive. Moreover, if $t_2\leq_\alpha t_1\leq_\alpha t_2$
then $|\beta(t_2t_1^{-1})|=1$ for all $\beta\in\Phi^+$ whence $t_2t_1^{-1}$
lies in $T_0$. Therefore $\leq_\alpha$ defines a partial ordering on the
quotient monoid $T_+/T_0$. 
\begin{lem}\label{rightfilt}
The partial ordering $\leq_\alpha$ on $T_+/T_0$ is right filtered, ie.\ any finite subset of $T_+/T_0$ has a common upper bound with respect to $\leq_\alpha$. 
\end{lem}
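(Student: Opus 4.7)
The plan is to construct, for any finite collection $\{t_1,\dots,t_n\}\subseteq T_+$, a common upper bound $t\in T_+$ with respect to $\leq_\alpha$ (the empty set is trivially bounded by $1\in T_+$). Writing $v_{i,\gamma}:=\val_p(\gamma(t_i))\geq 0$ for $\gamma\in\Delta$, and expanding any $\beta\in\Phi^+$ as $\beta=\sum_{\gamma\in\Delta}c_{\beta,\gamma}\gamma$ with $c_{\beta,\gamma}\in\mathbb{Z}_{\geq 0}$, the condition $t\geq_\alpha t_i$ amounts (by unwinding the definition) to $\val_p(\alpha(tt_i^{-1}))\geq 0$ together with
\[
\val_p(\beta(tt_i^{-1}))-\val_p(\alpha(tt_i^{-1}))\geq 0\qquad\text{for every }\beta\in\Phi^+.
\]

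The key step is to produce a \emph{balancing element} $u\in T_+$ with $\val_p(\alpha(u))=0$ and $\val_p(\gamma(u))>0$ for every $\gamma\in\Delta\setminus\{\alpha\}$. Since the simple roots $\Delta$ are $\mathbb{Q}$-linearly independent in $X^*(T)\otimes\mathbb{Q}$, the dual evaluation map $X_*(T)\otimes\mathbb{Q}\to\mathbb{Q}^\Delta$ sending $\lambda\mapsto(\langle\gamma,\lambda\rangle)_{\gamma\in\Delta}$ is surjective. Choose $\lambda\in X_*(T)\otimes\mathbb{Q}$ mapping to the vector with $\alpha$-coordinate $0$ and all other coordinates equal to $1$, clear denominators to get $N\lambda\in X_*(T)$ for some integer $N\geq 1$, and set $u:=(N\lambda)(p)\in T_+$.

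With $u$ in hand, the plan is to put $t:=t_1t_2\cdots t_n\cdot u^L$ for a positive integer $L$ to be chosen large. Since $T_+$ is a submonoid of $T$ containing all $t_i$ and $u$, we have $t\in T_+$. A direct computation gives $\val_p(\alpha(tt_i^{-1}))=\sum_{j\neq i}v_{j,\alpha}\geq 0$ and
\[
\val_p(\beta(tt_i^{-1}))-\val_p(\alpha(tt_i^{-1}))=\sum_{j\neq i}(v_{j,\beta}-v_{j,\alpha})+LN\sum_{\gamma\in\Delta\setminus\{\alpha\}}c_{\beta,\gamma}.
\]
For $\beta=\alpha$ this equals $0$. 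For $\beta\in\Phi^+\setminus\{\alpha\}$, the reducedness of the root system of the split reductive group $G$ forces $\sum_{\gamma\neq\alpha}c_{\beta,\gamma}\geq 1$ (otherwise $\beta$ would be a positive integer multiple of $\alpha$, hence equal to $\alpha$), so the right-hand side dominates $LN$ plus a quantity bounded independently of $L$. Since there are only finitely many pairs $(i,\beta)$, choosing $L$ large enough makes the right-hand side non-negative uniformly, giving $t\geq_\alpha t_i$ for every $i$.

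The main—though essentially routine—obstacle is the construction of $u$, which rests on the surjectivity of the cocharacter-evaluation map, a standard consequence of the $\mathbb{Q}$-linear independence of $\Delta$; the rest is bookkeeping with valuations. Note that taking the cofinal element $t=s^k$ alone would fail, as the simple roots $\beta\in\Delta\setminus\{\alpha\}$ satisfy $m_\beta=1$ and so $s^k$ provides no extra room in those directions—exactly the obstruction that $u$ is designed to overcome.
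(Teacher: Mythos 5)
Your proposal is correct and follows essentially the same route as the paper: your balancing element $u$ is exactly the paper's $s_{\overline{\alpha}}$ (an element of $T_+$ with $\val_p(\alpha(\cdot))=0$ and $\val_p(\beta(\cdot))>0$ for all other simple, hence all other positive, roots, obtained from the linear independence of $\Delta$ and the perfect pairing between $X^*(T)$ and $X_*(T)$), and raising it to a large power to dominate the finitely many given elements is the same mechanism as the paper's choice $t_1s_{\overline{\alpha}}^k$ with $k$ large. The only difference is cosmetic bookkeeping (you multiply all the $t_i$ together rather than reducing to two elements), so no further comment is needed.
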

\begin{proof}
Let $t_1,t_2\in T_+$ be
arbitrary with $|\alpha(t_1)|\leq |\alpha(t_2)|$. Since the simple roots $\beta\in\Delta$ are linearly independent in $X^*(T)=\Hom_{alg}(T,\mathbb{G}_m)$, and the pairing $X^*(T)\times X_*(T)\to\mathbb{Z}$ is perfect, we may choose
$s_{\overline{\alpha}}\in T$ so that
$|\beta(s_{\overline{\alpha}})|<|\alpha(s_{\overline{\alpha}})|=1$ for all $\alpha\neq\beta\in\Delta$. Since all the positive roots are positive linear combinations of the simple roots, we see immediately that $s_{\overline{\alpha}}\in T_+$. Moreover, if $\alpha\neq\gamma\in\Phi^+$ then $\gamma$ is not a scalar multiple of $\alpha$ hence writing $\gamma=\sum_{\beta\in\Delta}m_{\beta,\gamma}\beta$ there is a $\alpha\neq\beta\in\Delta$ with $m_{\beta,\gamma}>0$ whence $|\gamma(s_{\overline{\alpha}})|<1$. So we have
$t_1\leq_\alpha t_1s_{\overline{\alpha}}^k$ for any $k\geq 0$ and
$t_2\leq_\alpha t_1s_{\overline{\alpha}}^k$ for $k$ big enough.
\end{proof}

Fix an element $1\leq_\alpha t\in T_+$ and let
$p^{-\frac{1}{\max_{\beta\in \Phi^+} p^{m(\beta,t)+m(\alpha,t)}}}<\rho_1<\rho_2<1$ be a real numbers in 
$p^{\mathbb{Q}}$. Note that $\varphi_t\colon
N_0\to\varphi_t(N_0)$ is an isomorphism of pro-$p$ 
groups. Hence it induces an isometric isomorphism
\begin{eqnarray*}
\varphi_t\colon D_{[0,\rho_2^{p^{m(\alpha,t)}}]}(N_0)&\to& D_{[0,\rho_2^{p^{m(\alpha,t)}}]}(\varphi_t(N_0))\\
\sum_{\bf k} c_{\bf k}\prod_\beta b_\beta^{k_\beta}&\mapsto& \sum_{\bf k}
c_{\bf k}\prod_\beta \varphi_t(b_\beta)^{k_\beta}
\end{eqnarray*}
of Banach algebras where $D_{[0,\rho_2^{p^{m(\alpha,t)}}]}(\varphi_t(N_0))$ denotes the completion of $D(\varphi_t(N_0))$ with respect to the $\rho_2^{p^{m(\alpha,t)}}$-norm defined by the set of generators $\{\varphi_t(n_\beta)\}_{\beta\in\Phi^+}$ of $\varphi_t(N_0)$. To avoid confusion, from now on we denote by the subscript $\rho,N_0$ the $\rho$-norm (as before) on $D(N_0)$ and by the subscript $\rho,\varphi_t(N_0)$ the $\rho$-norm on $D(\varphi_t(N_0))$. By Lemma \ref{trivest} we have
\begin{equation*}
\|\varphi_t(b_\beta)\|_{\rho,N_0}=\rho^{p^{m(\beta,t)}}\leq\rho^{p^{m(\alpha,t)}}=\|\varphi_t(b_\alpha)\|_{\rho,N_0}
\end{equation*}
for any $\beta\in\Phi^+$ and $\rho=\rho_1$ or $\rho=\rho_2$ because of our assumption $1\leq_\alpha t$.
This shows that for any monomial
$\prod_{\beta\in\Phi^+}\varphi_t(b_{\beta})^{k_\beta}$ 
(with $k_\beta\geq 0$
for all $\beta\in\Phi^+$) we have
\begin{equation*}
\|\prod_{\beta\in\Phi^+}\varphi_t(b_{\beta})^{k_\beta}\|_{r_t(\rho)}=\|\prod_{\beta\in\Phi^+}\varphi_t(b_{\beta})^{k_\beta}\|_{\rho,N_0}\leq\rho^{p^{m(\alpha,t)}\bk}=\|\prod_{\beta\in\Phi^+}\varphi_t(b_{\beta})^{k_\beta}\|_{\rho^{p^{m(\alpha,t)}},\varphi_t(N_0)}
\end{equation*}
since both norms are multiplicative on $D(\varphi_t(N_0))$. Hence we
obtain a norm decreasing homomorphism
\begin{equation*}
D_{[0,\rho_2^{p^{m(\alpha,t)}}]}(N_0)\overset{\sim}{\rightarrow}
D_{[0,\rho_2^{p^{m(\alpha,t)}}]}(\varphi_t(N_0))\rightarrow
D_{r_t([0,\rho_2])}(\varphi(N_0))\hookrightarrow
D_{r_t([\rho_1,\rho_2])}(\varphi(N_0),\alpha)\ . 
\end{equation*}

Moreover, the element $\varphi_t(b_\alpha)$ is invertible in
$D_{r_t([\rho_1,\rho_2])}(\varphi_t(N_0),\alpha)$ and for each $\rho_1\leq
\rho\leq\rho_2$ and $x\in D_{[0,\rho_2]}(N_0)$ we have
\begin{equation*}
\|\varphi_t(x)\varphi_t(b_\alpha)^{-k}\|_{r_t(\rho)}\leq\|x\|_{\rho^{p^{m(\alpha,t)}},N_0}\|b_\alpha^{-k}\|_{\rho^{p^{m(\alpha,t)}},N_0}\ .
\end{equation*}
Therefore by the universal
property of microlocalisation (Prop.\ A.18 in \cite{SZ}) we obtain a norm
decreasing homomorphism 
\begin{eqnarray}
\varphi_{t,1}\colon D_{[\rho_1^{p^{m(\alpha,t)}},\rho_2^{p^{m(\alpha,t)}}]}(N_0,\alpha)&\to&
D_{r_t([\rho_1,\rho_2])}(\varphi_t(N_0),\alpha)\notag\\ 
b_\beta &\mapsto& \varphi_t(b_\beta)\ .\label{phit1}
\end{eqnarray}
Note that this above map is not surjective in general by Example
\ref{ex}.
\begin{lem}
The map \eqref{phit1} is injective.
\end{lem}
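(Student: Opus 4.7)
The plan is to factor $\varphi_{t,1}$ as the composition of a ring isomorphism and a coefficient-preserving change of topology, both injective. Via the group isomorphism $\varphi_t\colon N_0\xrightarrow{\sim}\varphi_t(N_0)$ (each uniform pro-$p$, with matching minimal generating sets $n_\beta\mapsto\varphi_t(n_\beta)$) one obtains a ring isomorphism $\widetilde\varphi_t\colon D(N_0)\xrightarrow{\sim}D(\varphi_t(N_0))$ sending $b_\beta\mapsto\varphi_t(b_\beta)$, and this is an isometry with respect to the intrinsic $\rho$-norms on either side. Writing $\rho_i':=\rho_i^{p^{m(\alpha,t)}}$, $\widetilde\varphi_t$ therefore extends to an isomorphism of microlocalized Banach algebras
\[
\widetilde\varphi_t\colon D_{[\rho_1',\rho_2']}(N_0,\alpha)\xrightarrow{\sim}D^{\mathrm{intr}}_{[\rho_1',\rho_2']}(\varphi_t(N_0)),
\]
where the superscript $\mathrm{intr}$ denotes completion in the intrinsic $\rho'$-norm on $D(\varphi_t(N_0))$ and microlocalization at $\varphi_t(b_\alpha)$.

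Next I would compare the intrinsic and restricted norms on $D(\varphi_t(N_0))$. A monomial computation using Lemma \ref{trivest} together with the hypothesis $1\leq_\alpha t$ (equivalently $m(\beta,t)\geq m(\alpha,t)$ for all $\beta\in\Phi^+$) gives, for $\bk\in\mathbb{Z}^{\{\alpha\}}\times\mathbb{N}^{\Phi^+\setminus\{\alpha\}}$ (so $k_\beta\geq 0$ for $\beta\neq\alpha$),
\[
\|\varphi_t(\bb)^{\bk}\|_{r_t(\rho)}=\rho^{\sum_\beta p^{m(\beta,t)}k_\beta}\leq\rho^{p^{m(\alpha,t)}\sum_\beta k_\beta}=(\rho')^{\bk}=\|\varphi_t(\bb)^{\bk}\|^{\mathrm{intr}}_{\rho'}.
\]
By the non-Archimedean triangle inequality this extends to every element, so the identity on $D(\varphi_t(N_0))$ extends continuously to a ring homomorphism
\[
j\colon D^{\mathrm{intr}}_{[\rho_1',\rho_2']}(\varphi_t(N_0))\longrightarrow D_{r_t([\rho_1,\rho_2])}(\varphi_t(N_0),\alpha),
\]
and by construction $\varphi_{t,1}=j\circ\widetilde\varphi_t$. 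Since $\widetilde\varphi_t$ is an isomorphism, injectivity of $\varphi_{t,1}$ reduces to injectivity of $j$.

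To see that $j$ is injective I would exploit unique Laurent expansions on both sides. The source has a unique representation as a convergent Laurent series $\sum c_{\bk}\varphi_t(\bb)^{\bk}$ with $|c_{\bk}|(\rho')^{\bk}\to 0$ for all $\rho'\in[\rho_1',\rho_2']$, by Proposition \ref{4}(ii) applied to the uniform pro-$p$ group $\varphi_t(N_0)$ with the intrinsic $\rho'$-norm. The target admits an analogous unique expansion $\sum c_{\bk}\varphi_t(\bb)^{\bk}$ with $|c_{\bk}|\rho^{\sum_\beta p^{m(\beta,t)}k_\beta}\to 0$ for $\rho\in[\rho_1,\rho_2]$; here uniqueness holds because the $r_t(\rho)$-norm of such a series equals $\max_{\bk}|c_{\bk}|\rho^{\sum_\beta p^{m(\beta,t)}k_\beta}$ (by multiplicativity of the $\rho$-norm on $D(N_0)$ together with the direct-summand decomposition established in the paragraph preceding the lemma), and so separates coefficients. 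Since $j$ preserves coefficients, $j(x)=0$ forces every coefficient of $x$ to vanish, hence $x=0$. Therefore $j$, and with it $\varphi_{t,1}$, is injective. The main subtlety is the unique Laurent expansion in the restricted-norm completion, which rests on the coefficient-separation property of the $r_t(\rho)$-norm; beyond this the argument is routine.
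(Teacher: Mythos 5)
Your reduction of the problem to the comparison map $j$ (factoring $\varphi_{t,1}$ through the intrinsic completion via the isometric isomorphism induced by $\varphi_t\colon N_0\to\varphi_t(N_0)$) is harmless, and the source side is indeed handled by Prop.\ \ref{4}.ii. But the step you yourself flag as the main subtlety --- that the $r_t(\rho)$-norm of a series $\sum_{\bk}c_{\bk}\varphi_t(\bb)^{\bk}$ equals $\max_{\bk}|c_{\bk}|\rho^{\sum_\beta p^{m(\beta,t)}k_\beta}$, so that coefficients are separated in the target --- is exactly the content of the lemma, and the justification you offer does not establish it. Multiplicativity of $\|\cdot\|_\rho$ only computes the norm of a single monomial; it says nothing about possible cancellation among \emph{distinct} monomials in the elements $\varphi_t(b_\beta)$, which are not coordinate variables for the $\rho$-norm on $D(N_0)$ (a multiplicative norm does not make arbitrary families of monomials orthogonal). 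The direct-sum decomposition \eqref{dec0} (or its microlocalized analogue) is of no help either: it separates the cosets $n\in J(N_0/\varphi_t(N_0))$, while every monomial $\varphi_t(\bb)^{\bk}$ lies in the single summand $n=1$, so it cannot separate their coefficients. As written, the key property is assumed rather than proved.

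What is needed, and what the paper's proof supplies, is a leading-term argument. By Lemma \ref{trivest}, for the admissible $\rho$ one has $\|\varphi_t(b_\beta)-\binom{\beta(t)}{p^{m(\beta,t)}}b_\beta^{p^{m(\beta,t)}}\|_{\rho,N_0}<\|\varphi_t(b_\beta)\|_{\rho,N_0}$, so modulo terms of strictly smaller norm each $\varphi_t(\bb)^{\bk_j}$ may be replaced by a unit multiple of the standard Laurent monomial $\bb^{\bk'_j}$ with $k'_{j,\beta}=p^{m(\beta,t)}k_{j,\beta}$. Since $\bk\mapsto\bk'$ is injective, Prop.\ \ref{4} (the symbols $\sigma(\bb^{\bk})$ form a free basis of the graded ring, equivalently the norm of a convergent sum of standard monomials is the maximum of the term norms) gives $\|\sum_j d_{\bk_j}\varphi_t(\bb)^{\bk_j}\|_{r_t(\rho_1),r_t(\rho_2)}=\max_j\|d_{\bk_j}\varphi_t(\bb)^{\bk_j}\|_{r_t(\rho_1),r_t(\rho_2)}$ for finitely many distinct $\bk_j$; passing to the limit (your map is norm-decreasing, so the image series converges and the ultrametric maximum persists) yields the coefficient separation and hence injectivity. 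With this inserted, your factorization is just a repackaging of the paper's argument; without it, the proof has a genuine hole at its only nontrivial point.
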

\begin{proof}
Take an element $f(\bb)=\sum_{\bk}d_{\bk}\bb^\bk\in D_{[\rho_1^{p^{m(\alpha,t)}},\rho_2^{p^{m(\alpha,t)}}]}(N_0,\alpha)$ and pairwise distinct  $\bk_1,\dots,\bk_r\in\mathbb{Z}^{\{\alpha\}}\times\mathbb{N}^{\Phi^+\setminus\{\alpha\}}$. Note that $\|\varphi_t(b_\beta)\|_{\rho,N_0}>\|\varphi_t(b_\beta)-\binom{\beta(t)}{p^{m(\beta,t)}}b_\beta^{p^{m(\beta,t)}}\|_{\rho,N_0}$ hence we obtain
\begin{align*}
\|\sum_{j=1}^r d_{\bk_j}\varphi_t(\bb)^{\bk_j}\|_{r_t(\rho_1),r_t(\rho_2)}=\|\sum_{j=1}^r d_{\bk_j}\prod_{\beta\in\Phi^+}\left(\binom{\beta(t)}{p^{m(\beta,t)}}b_\beta^{p^{m(\beta,t)}}\right)^{k_{j,\beta}}\|_{\rho_1,\rho_2}=\\
\max_j\| d_{\bk_j}\prod_{\beta\in\Phi^+}\left(\binom{\beta(t)}{p^{m(\beta,t)}}b_\beta^{p^{m(\beta,t)}}\right)^{k_{j,\beta}}\|_{\rho_1,\rho_2}=\max_j \|d_{\bk_j}\varphi_t(\bb)^{\bk_j}\|_{r_t(\rho_1),r_t(\rho_2)}
\end{align*}
using Prop.\ \ref{4} as we have $\prod_{\beta\in\Phi^+}b_\beta^{p^{m(\beta,t)}k_{j_1,\beta}}\neq \prod_{\beta\in\Phi^+}b_\beta^{p^{m(\beta,t)}k_{j_2,\beta}}$ for $1\leq j_1\neq j_2 \leq r$.

Since the map $\varphi_{t,1}$ is norm decreasing, we have $\|d_\bk\varphi_t(\bb)^\bk\|_{r_t(\rho_1),r_t(\rho_2)}\to 0$ as $\bk\to\infty$. Therefore we also have $\|\sum_{\bk}d_{\bk}\varphi_t(\bb)^\bk\| _{r_t(\rho_1),r_t(\rho_2)}=\max_{\bk} \|d_{\bk}\varphi_t(\bb)^{\bk}\|_{r_t(\rho_1),r_t(\rho_2)}$ which is nonzero if there exists a $\bk$ with $d_{\bk}\neq 0$. Therefore the injectivity.
\end{proof}

Taking projective and injective limits we obtain an injective ring 
homomorphism
\begin{equation*}
\varphi_{t,1}\colon\mathfrak{R}_0(N_0,\alpha)\hookrightarrow\mathfrak{R}_{0,r_t(\cdot)}(\varphi_t(N_0),\alpha)
\end{equation*}
for any $1\leq_\alpha t\in T_+$.

\begin{rem}
Note that $\mathfrak{R}_{0,r_t(\cdot)}(\varphi_t(N_0),\alpha)$ is a subring of $\mathfrak{R}_0(N_0,\alpha)$ via the map $\iota_{1,t}$ (for all $t\in T_+$). Hence for $1\leq_\alpha t$ we obtain a ring homomorphism $\varphi_t=\iota_{1,t}\circ\varphi_{t,1}\colon \mathfrak{R}_0(N_0,\alpha)\to \mathfrak{R}_0(N_0,\alpha)$. However, if $1\not\leq_\alpha t$ for some $t\in T_+$ then we in fact do not have a continuous ring homomorphism $\varphi_t\colon
\mathfrak{R}_0(N_0,\alpha)\to\mathfrak{R}_0(N_0,\alpha)$. Indeed, in this case there exists a $\beta\in \Phi^+$ such that $|\beta(t)|>|\alpha(t)|$ so there exist integers $k_\beta>k_\alpha$ such that $\|\varphi_t(b_\beta^{k_\beta}b_\alpha^{-k_\alpha})\|_{\rho}=\rho^{k_\beta/|\beta(t)|-k_\alpha/|\alpha(t)|}>1$ for any $p^{-|\alpha(t)|}<\rho<1$ therefore $\sum_{n=1}^{\infty}\varphi_t(b_\beta^{nk_\beta}b_\alpha^{-nk_\alpha})$ does not converge in $\mathfrak{R}_0(N_0,\alpha)$ even though $\sum_{n=1}^{\infty}b_\beta^{nk_\beta}b_\alpha^{-nk_\alpha}$ does. 
\end{rem}

\begin{rem}\label{notetale}
If $\Phi^+\neq \Delta$ (e.g.\ if $G=\GL_n(\mathbb{Q}_p)$, $n>2$) then we have
\begin{equation*}
\mathfrak{R}_0(N_0,\alpha)=\bigoplus_{u\in
  J(N_0/\varphi_t(N_0))}u\iota_{1,s}(\mathfrak{R}_{0,r_s(\cdot)}(\varphi(N_0),\alpha))\supsetneq \bigoplus_{n\in
  J(N_0/\varphi(N_0))}u\varphi(\mathfrak{R}_{0}(N_0,\alpha))
\end{equation*}
by \eqref{dec1} (with the choice $t=s$) and Example \ref{ex} (which shows that $\varphi_{s,1}$ is not surjective).
\end{rem}

In a similar fashion we get for $t_1\in T_+$ (and $1\leq_\alpha t\in T_+$) an injective
homomorphism
\begin{equation*}
\varphi_{tt_1,t_1}\colon\mathfrak{R}_{0,r_{t_1}(\cdot)}(\varphi_{t_1}(N_0),\alpha)\to\mathfrak{R}_{0,r_{tt_1}(\cdot)}(\varphi_{tt_1}(N_0),\alpha)\ .
\end{equation*}
In view of Lemma \ref{rightfilt} we define
\begin{equation*}
\mathcal{R}(N_1,\ell):=\varinjlim_{t\in T_+}\mathfrak{R}_{0,r_t(\cdot)}(\varphi_t(N_0),\alpha)
\end{equation*}
with respect to the maps $\varphi_{t_1,t_2}$ for $t_2\leq_\alpha t_1$.

Now take any $t\in T_+$ (not necessarily satisfying $1\leq_\alpha t$). The map 
\begin{equation*}
\varphi_t:=\varinjlim_{t_1}\iota_{t_1,tt_1}\colon\mathcal{R}(N_1,\ell)\to\mathcal{R}(N_1,\ell) 
\end{equation*}
is defined as the direct limit of the inclusion maps
\begin{equation*}
\iota_{t_1,tt_1}\colon \mathfrak{R}_{0,r_{tt_1}(\cdot)}(\varphi_{tt_1}(N_0),\alpha)\hookrightarrow 
\mathfrak{R}_{0,r_{t_1}(\cdot)}(\varphi_{t_1}(N_0),\alpha)
\end{equation*}
induced by $\varphi_{tt_1}(N_0)\subseteq \varphi_{t_1}(N_0)$. By definition, for any $t\in T_+$ the ring $\mathfrak{R}_{0,r_t(\cdot)}(\varphi_t(N_0),\alpha)$ consists of formal power series $\sum_{\bf k}c_{\bf k}\varphi_t({\bf b})^{\bf k}$ that converge in $\mathfrak{R}_0(N_0,\alpha)$. Therefore the map
\begin{align*}
\bigcup_{t\in T_+}\left\{
\sum_{{\bf k}\in\mathbb{Z}^{\{\alpha\}}\times\mathbb{N}^{\Phi^+\setminus\{\alpha\}}}c_{\bf
  k}{\bf b}^{\bf k}
        \text{ }{\Big |} \text{ }
\sum_{\bf k}c_{\bf k}\varphi_t({\bf b})^{\bf k}\text{
convergent in }\mathfrak{R}_0(N_0,\alpha)\right\}\to\mathcal{R}(N_1,\ell)\\
\sum_{\bf k}c_{\bf
  k}{\bf b}^{\bf k}\mapsto\sum_{\bf k}c_{\bf k}\varphi_t({\bf b})^{\bf k}\in \mathfrak{R}_{0,r_t(\cdot)}(\varphi_t(N_0),\alpha)\hookrightarrow\mathcal{R}(N_1,\ell)
\end{align*}
is well-defined and bijective since $\sum_{\bf k}c_{\bf k}\varphi_t({\bf b})^{\bf k}$ converges for some $t\in T_+$ and the connecting homomorphisms in the injective limit defining $\mathcal{R}(N_1,\ell)$ are injective and given by $\varphi_{t_1,t_2}$ for $t_2\leq_\alpha t_1$.

Hence we may identify
\begin{equation}
\mathcal{R}(N_1,\ell)=\bigcup_{t\in T_+}\left\{
\sum_{{\bf k}\in\mathbb{Z}^{\{\alpha\}}\times\mathbb{N}^{\Phi^+\setminus\{\alpha\}}}c_{\bf
  k}{\bf b}^{\bf k}
        \text{ }{\Big |} \text{ }
\sum_{\bf k}c_{\bf k}\varphi_t({\bf b})^{\bf k}\text{
convergent in }\mathfrak{R}_0(N_0,\alpha)\right\}
\label{expandR}
\end{equation}
and obtain
\begin{pro}
The natural map $\varphi_t\colon\mathcal{R}(N_1,\ell)\to \mathcal{R}(N_1,\ell)$
is injective for all $t\in T_+$ and we have the decomposition
\begin{equation*}
\mathcal{R}(N_1,\ell)=\bigoplus_{n\in J(N_0/\varphi_t(N_0))}n\varphi_t(\mathcal{R}(N_1,\ell))\ .
\end{equation*}
In particular, $\mathcal{R}(N_1,\ell)$ is a free (right) module over itself
via $\varphi_t$ and it is a $\varphi$-ring over $N_0$ with $\varphi=\varphi_s$ in the sense of Definition \ref{phiringH_0}.
\end{pro}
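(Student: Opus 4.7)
The plan is to reduce both assertions—injectivity of $\varphi_t$ and the direct sum decomposition—to their analogues at each finite level $t_1\in T_+$ of the direct system $\{A_{t_1}:=\mathfrak{R}_{0,r_{t_1}(\cdot)}(\varphi_{t_1}(N_0),\alpha)\}_{t_1}$ whose limit is $\mathcal{R}(N_1,\ell)$, and then to pass to the limit. The key input at level $t_1$ is the decomposition \eqref{dec2} applied with $t_2:=t$, namely
\begin{equation*}
A_{t_1}=\bigoplus_{m\in J(\varphi_{t_1}(N_0)/\varphi_{tt_1}(N_0))}m\,\iota_{t_1,tt_1}(A_{tt_1}).
\end{equation*}
Since $T$ is commutative, $\varphi_{tt_1}(N_0)=\varphi_{t_1}(\varphi_t(N_0))$, so the coset representatives may be parametrised as $m=\varphi_{t_1}(n)$ with $n\in J(N_0/\varphi_t(N_0))$. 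I will use the following dictionary between level $t_1$ and $\mathcal{R}(N_1,\ell)$: via the identification \eqref{expandR}, the element $\varphi_{t_1}(n)\in A_{t_1}$ realises $n\in N_0\subseteq K[N_0]\subseteq\mathcal{R}(N_1,\ell)$, and $\iota_{t_1,tt_1}(y)\in A_{t_1}$ for $y\in A_{tt_1}$ realises $\varphi_t(y')\in\mathcal{R}(N_1,\ell)$, where $y'$ is the abstract element represented by $y$ at level $tt_1$ (this is because the transition map into $A_{t_1}$ identifies $\varphi_{t_1}(\varphi_t(b_\beta))=\varphi_{tt_1}(b_\beta)$ with the abstract symbol $\varphi_t(b_\beta)$).

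For the injectivity, suppose $\varphi_t(x)=0$ for some $x\in\mathcal{R}(N_1,\ell)$ represented at some level $t_0$. By Lemma \ref{rightfilt} applied to $\{t_0,t\}\subseteq T_+$, there is $t_2\in T_+$ with $t_0,t\leq_\alpha t_2$; setting $t_1:=t_2t^{-1}$, one has $t_1\in T_+$ (indeed $|\beta(t_1)|\leq|\alpha(t_1)|\leq 1$ for all $\beta\in\Phi^+$ by the relation $t\leq_\alpha t_2$), so $x$ is represented at level $tt_1=t_2$ by some $y\in A_{tt_1}$. The image $\varphi_t(x)$ is then represented at level $t_1$ by $\iota_{t_1,tt_1}(y)$. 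Since the transition maps $\varphi_{t_1',t_1}$ of the direct system are injective (as noted earlier in the text in the same way as the injectivity of $\varphi_{t,1}$), the structural maps $A_{t_1}\to\mathcal{R}(N_1,\ell)$ are injective too, hence $\iota_{t_1,tt_1}(y)=0$ in $A_{t_1}$. But \eqref{dec2} presents $\iota_{t_1,tt_1}(A_{tt_1})$ as the direct summand corresponding to $m=1$; in particular $\iota_{t_1,tt_1}$ is injective, forcing $y=0$ and thus $x=0$.

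For the decomposition, given $x\in\mathcal{R}(N_1,\ell)$ I represent it at a level $t_1\in T_+$ (choosing $t_1$ large enough as above) and uniquely decompose its representative via \eqref{dec2} as $x_{t_1}=\sum_{n\in J(N_0/\varphi_t(N_0))}\varphi_{t_1}(n)\,\iota_{t_1,tt_1}(y_n)$ with $y_n\in A_{tt_1}$. The dictionary translates this into $x=\sum_{n}n\,\varphi_t(y'_n)$ in $\mathcal{R}(N_1,\ell)$, establishing existence. Uniqueness follows from the directness of \eqref{dec2} at each level combined with the injectivity of $\iota_{t_1,tt_1}$ and of the structural maps into the limit. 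The $\varphi$-ring structure of $\mathcal{R}(N_1,\ell)$ over $N_0$ is then immediate: $\chi\colon N_0\hookrightarrow\mathcal{R}(N_1,\ell)^\times$ is the embedding $N_0\subseteq D(N_0,K)\subseteq\mathfrak{R}_0(N_0,\alpha)=A_1\hookrightarrow\mathcal{R}(N_1,\ell)$ (each $n\in N_0$ being a unit in the group algebra); the compatibility $\varphi_s\circ\chi=\chi\circ\varphi_s$ holds because $\varphi_s$ is by construction conjugation by $s$ on $N_0$; and the right-handed decomposition $\mathcal{R}(N_1,\ell)=\bigoplus_n\varphi_t(\mathcal{R}(N_1,\ell))\chi(n)$ required by Definition \ref{phiringH_0} is obtained by running the identical argument with the right-coset version of \eqref{dec2} (applying Lemma \ref{cosets}).

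The main obstacle is purely bookkeeping: maintaining a clean correspondence between the abstract direct-limit elements of $\mathcal{R}(N_1,\ell)$ and their concrete realisations at finite levels, and in particular verifying the two identifications $\varphi_{t_1}(n)\leftrightarrow n$ and $\iota_{t_1,tt_1}\leftrightarrow\varphi_t$ that make the passage from \eqref{dec2} to the desired decomposition coherent across the filtered system.
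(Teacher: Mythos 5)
Your argument is essentially the paper's own proof: apply \eqref{dec2} with $t_2=t$ at each level $t_1$, identify $\varphi_{t_1}(n)$ with $n\in N_0$ and $\iota_{t_1,tt_1}$ with $\varphi_t$ via the identification \eqref{expandR}, and pass to the injective limit over $t_1$. The extra bookkeeping you supply (injectivity of the structural maps into the limit, uniqueness of the decomposition, and the filtered-choice of a common level via Lemma \ref{rightfilt}) only makes explicit what the paper leaves implicit, so the proposal is correct and follows the same route.
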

\begin{proof}
By \eqref{dec2} we have 
\begin{equation*}
\mathfrak{R}_{0,r_{t_1}(\cdot)}(\varphi_{t_1}(N_0),\alpha)=\bigoplus_{n\in J(N_0/\varphi_t(N_0))}\varphi_{t_1}(n)\iota_{t_1,tt_1}(\mathfrak{R}_{0,r_{tt_1}(\cdot)}(\varphi_{tt_1}(N_0),\alpha))
\end{equation*}
for any $t_1\in T_+$. The statement follows by taking the injective limit of
both sides (with respect to $t_1$) and noting that $\varphi_{t_1,1}(n)=\varphi_{t_1}(n)\in\varphi_{t_1}(N_0)\subseteq \mathfrak{R}_{0,r_{t_1}(\cdot)}(\varphi_{t_1}(N_0),\alpha)$ for $n\in N_0\subseteq \mathfrak{R}_0(N_0)$ and $1\leq_\alpha t_1$ therefore $n$ corresponds to $\varinjlim_{1\leq_\alpha t_1}(\varphi_{t_1}(n))_{t_1}$ via the identification \eqref{expandR}. 
\end{proof}

\begin{rem}\label{expandRell}
The ring $\mathcal{R}(N_1,\ell)$ via the description \eqref{expandR} consists of exactly those Laurent-series $$x=\sum_{{\bf k}\in\mathbb{Z}^{\{\alpha\}}\times\mathbb{N}^{\Phi^+\setminus\{\alpha\}}}c_{\bf
  k}{\bf b}^{\bf k}$$ that converge on the open annulus of the form
\begin{equation}
\left\{\rho_2<|z_\alpha|<1,\ |z_{\beta}|\leq |z_\alpha|^r\text{ for
}\beta\in\Phi^+\setminus\{\alpha\}\right\}\ .\label{annulus}
\end{equation}
for some $p^{-1}<\rho_2<1$ and $1\leq r\in\mathbb{Z}$.
\end{rem}
\begin{proof}
If $x\in \mathcal{R}(N_1,\ell)$ then there exists a $t\in T_+$ such that $\varphi_t(x)$ converges in $\mathfrak{R}_0(N_0)$, ie.\ it converges in the norm $\|b_\beta\|_\rho=\rho$ for all $\beta\in \Phi^+$ for some fixed $p^{-1}<\rho_0<1$ and all $\rho\in(\rho_0,1)$. By Lemma \ref{rightfilt} we may assume that $|\alpha(t)|=1$ whence $\|\varphi_t(b_\alpha)\|_\rho=\rho$ for all $\rho<1$ as we may take $t=s_{\overline{\alpha}}^k$ for $k$ large enough. Now let $\rho_2:=\rho_0$ and $r:=\max_{\beta\in\Phi^+}([|1/\beta(t)|]+1)\in\mathbb{Z}$. Then $x$ converges on the annulus \eqref{annulus} as we have $\rho^r\leq \rho^{1/|\beta(t)|}\leq\|\varphi_t(b_{\beta})\|_\rho$ for all $\beta\in\Phi^+\setminus\{\alpha\}$ by Lemma \ref{trivest}.

Conversely for any fixed $p^{-1}<\rho_2<1$ and integer $r\geq 1$ we need to find a $t\in T_+$ and a $\rho_0\in (p^{-1},1)$ such that for all $\rho\in (\rho_0,1)$ we have $\rho_2<\|\varphi_t(b_\alpha)\|_{\rho}<1$ and $\|\varphi_t(b_\beta)\|_{\rho}\leq \|\varphi_t(b_\alpha)\|_{\rho}^r$. We take $t:=s_{\overline{\alpha}}^k$ and $\rho_0:=\max(\rho_2,p^{-|\beta(t)|}\mid \beta\in\Phi^+\setminus\{\alpha\})$ where $k:=\max_{\beta\in\Phi^+\setminus\{\alpha\}}([-\frac{\log r}{\log |\beta(s_{\overline{\alpha}})|}]+1)$ (for the definition of $s_{\overline{\alpha}}$ see the proof of Lemma \ref{rightfilt}). Indeed, since $|\alpha(s_{\overline{\alpha}}^k)|$ equals $1$, we have $\rho_2<\rho=\|\varphi_{s_{\overline{\alpha}}^k}(b_\alpha)\|_{\rho}<1$ (for any $k$). On the other hand, we have $|\beta(s_{\overline{\alpha}})|<1$ for all $\alpha\neq\beta\in\Phi^+$ (whence, in particular, the definition of $k$ makes sense), so we obtain $$\|\varphi_{t}(b_\beta)\|_{\rho}=\max\limits_{0\leq j\leq \mathrm{val}_p(\beta(t))}(\rho^{p^j}p^{j-\mathrm{val}_p(\beta(t))})=\rho^{p^{\mathrm{val}_p(\beta(t))}}=\rho^{1/|\beta(s_{\overline{\alpha}})|^k}\leq\rho^r$$ for all $\beta\in\Phi^+\setminus\{\alpha\}$ by Lemma \ref{trivest}, the choice of $k$ so that we have $r\leq 1/|\beta(s_{\overline{\alpha}})|^k$, and the choice of $p^{-|\beta(t)|}<\rho$ so that we have $\max\limits_{0\leq j\leq \mathrm{val}_p(\beta(t))}(\rho^{p^j}p^{j-\mathrm{val}_p(\beta(t))})=\rho^{p^{\mathrm{val}_p(\beta(t))}}$.
\end{proof}

\subsection{Bounded rings}

Let us denote by $\mathfrak{R}^b_0$ (resp.\ by $\mathfrak{R}^{int}_0$) the set of elements $x\in\mathfrak{R}_0$ such that $\lim_{\rho\to1}\|x\|_{\rho,\rho}$ exists (resp.\ exists and is at most $1$). These are subrings of $\mathfrak{R}_0$ by Prop.\ A.28 in \cite{SZ}. Moreover, since $\varphi_t$ is norm-decreasing for any $1\leq_\alpha t$ (see \eqref{phit1}), these subrings are stable under the action of $\varphi_t$ ($1\leq_\alpha t\in T_+$). We put 
\begin{eqnarray*}
\mathfrak{R}_{0,r_t(\cdot)}^b(\varphi_t(N_0),\alpha)&:=&\mathfrak{R}_{0,r_t(\cdot)}(\varphi_t(N_0),\alpha)\cap\mathfrak{R}_{0}^b(N_0,\alpha)\ ,\quad\text{and}\\
\mathfrak{R}_{0,r_t(\cdot)}^{int}(\varphi_t(N_0),\alpha)&:=&\mathfrak{R}_{0,r_t(\cdot)}(\varphi_t(N_0),\alpha)\cap\mathfrak{R}_{0}^{int}(N_0,\alpha) 
\end{eqnarray*}
where the intersection is taken inside $\mathfrak{R}_0$ under the inclusion $\iota_{1,t}\colon\mathfrak{R}_{0,r_t(\cdot)}(\varphi_t(N_0),\alpha)\hookrightarrow\mathfrak{R}_0$. Hence
\begin{equation*}
\mathcal{R}^b(N_1,\ell):=\varinjlim_t\mathfrak{R}^b_{0,r_t(\cdot)}(\varphi_t(N_0),\alpha)\text{ and }\mathcal{R}^{int}(N_1,\ell):=\varinjlim_t\mathfrak{R}^{int}_{0,r_t(\cdot)}(\varphi_t(N_0),\alpha)
\end{equation*}
are $T_+$-stable subrings of $\mathcal{R}(N_1,\ell)$ (the injective limit is taken with repsect to the maps $\varphi_{t_1,t_2}$ for $t_1\leq_\alpha t_2\in T_+$ as in the construction of $\mathcal{R}(N_1,\ell)$). Further, Lemma \ref{equivnorm} shows that for any $t\in T_+$ and $x\in \mathfrak{R}_0$ we have 
\begin{equation}\label{lims}
\lim_{\rho\to1}\|x\|_{\rho}=\lim_{\rho\to1}\|x\|_{q_t(r_t(\rho))}\ . 
\end{equation}
Indeed, we may use Lemma \ref{equivnorm} in the context of $\mathfrak{R}_0$ the following way. The elements of $D_{[\rho_1,\rho_2]}(N_0,\alpha)$ are Cauchy sequences $(a_n\varphi_t(b_\alpha)^{-k_n})_{n\in\mathbb{N}}$ (in the norm $\max(\|\cdot\|_{\rho_1},\|\cdot\|_{\rho_2})$) with $a_n\in D_{[0,\rho_2]}(N_0)$ and $k_n\geq 0$. Since $\|\cdot\|_\rho$ is multiplicative for any $\rho_1\leq\rho\leq\rho_2$ in $p^{\mathbb{Q}}$ and so is its restriction to $D(\varphi_t(N_0))$ we compute
\begin{align*}
\|a_n\varphi_t(b_\alpha)^{-k_n}\|_{\rho}\rho^{\sum_{\beta\in\Phi^+}(p^{m(\beta,t)}-1)}=\frac{\|a_n\|_\rho}{\|\varphi_t(b_\alpha)^{k_n}\|_{\rho}\rho^{-\sum_{\beta\in\Phi^+}(p^{m(\beta,t)}-1)}}\leq\frac{\|a_n\|_{q_t(r_t(\rho))}}{\|\varphi_t(b_\alpha)^{k_n}\|_{q_t(r_t(\rho))}}=\\
=\|a_n\varphi_t(b_\alpha)^{-k_n}\|_{q_t(r_t(\rho))}\leq	\frac{\|a_n\|_{\rho}\rho^{-\sum_{\beta\in\Phi^+}(p^{m(\beta,t)}-1)}}{\|\varphi_t(b_\alpha)^{k_n}\|_{\rho}}
\leq \|a_n\varphi_t(b_\alpha)^{-k_n}\|_{\rho}\rho^{-\sum_{\beta\in\Phi^+}(p^{m(\beta,t)}-1)}\ .
\end{align*}
If $\rho\to 1$ and $n\to\infty$ we obtain \eqref{lims}. Combining this observation with \eqref{dec1} we obtain
\begin{align*}
\mathfrak{R}^b_0(N_0,\alpha)=\bigoplus_{n\in
  J(N_0/\varphi_t(N_0))}n\iota_{1,t}(\mathfrak{R}^b_{0,r_t(\cdot)}(\varphi_t(N_0),\alpha))\ ;\\
\mathfrak{R}^{int}_0(N_0,\alpha)=\bigoplus_{n\in
  J(N_0/\varphi_t(N_0))}n\iota_{1,t}(\mathfrak{R}^{int}_{0,r_t(\cdot)}(\varphi_t(N_0),\alpha))\ .
\end{align*}
So by a similar argument as for $\mathcal{R}(N_1,\ell)$ we also obtain
\begin{align*}
\mathcal{R}^b(N_1,\ell)=\bigoplus_{n\in J(N_0/\varphi_t(N_0))}n\varphi_t(\mathcal{R}^b(N_1,\ell))\ ;\\
\mathcal{R}^{int}(N_1,\ell)=\bigoplus_{n\in J(N_0/\varphi_t(N_0))}n\varphi_t(\mathcal{R}^{int}(N_1,\ell)),
\end{align*}
in other words these are $\varphi$-rings over $N_0$ in the sense of Definition \ref{phiringH_0}.

\begin{rem}\label{int}
Note that by Lemma A.27 in \cite{SZ} an element $\sum_{{\bf k}\in\mathbb{N}^{\Phi^+\setminus\{\alpha\}}\times\mathbb{Z}}c_{\bf
  k}{\bf b}^{\bf k}\in \mathcal{R}(N_1,\ell)$ (under the parametrization \eqref{expandR}) lies in $\mathcal{R}^b(N_1,\ell)$ (resp.\ in $\mathcal{R}^{int}(N_1,\ell)$) if and only if $|c_{\bf k}|$ is bounded (resp.\ $\leq 1$) for ${\bf k}\in\mathbb{Z}^{\{\alpha\}}\times\mathbb{N}^{\Phi^+\setminus\{\alpha\}}$.
\end{rem}

\subsection{Relation with the completed Robba ring and overconvergent ring}\label{relate}

\begin{lem}\label{intmult}
There exists a continuous (in the weak topology of $\Lambda_\ell(N_0)$) injective ring homomorphism $j_{int}\colon\mathcal{R}^{int}(N_1,\ell)\to\Lambda_\ell(N_0)$ respecting Laurent series expansions. The image of $j_{int}$ is contained in $\mathcal{O}_{\mathcal{E}}^\dagger\bs N_1,\ell\js\subset \Lambda_\ell(N_0)$.
\end{lem}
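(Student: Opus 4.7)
The plan is to construct $j_{int}$ as the projective limit of level-$k$ maps. Using the isomorphism $\Lambda_\ell(N_0)\cong\mathcal{O_E}\bs N_1,\ell\js=\varprojlim_k\mathcal{O_E}[N_1/N_k,\ell]$ from Lemma \ref{isom}, I will define for each $k\geq 1$ a ring homomorphism
\begin{equation*}
j_{int,k}\colon\mathcal{R}^{int}(N_1,\ell)\to\mathcal{O_E}^\dagger[N_1/N_k,\ell]
\end{equation*}
and then check that these maps are compatible under the surjections induced by $N_1/N_{k+1}\twoheadrightarrow N_1/N_k$. The projective limit is then $j_{int}$, and its image tautologically lands in $\varprojlim_k\mathcal{O_E}^\dagger[N_1/N_k,\ell]=\mathcal{O_E}^\dagger\bs N_1,\ell\js$. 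Continuity in the weak topology of $\Lambda_\ell(N_0)$, which is generated by the ideals $I_k$, is automatic from this projective-limit description since $j_{int}^{-1}(I_k)\supseteq\ker(j_{int,k})$.

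To define $j_{int,k}$, I will use the description \eqref{expandR} together with Remark \ref{int}: an element $x\in\mathcal{R}^{int}(N_1,\ell)$ is a Laurent series $x=\sum_{\bf k}c_{\bf k}\bb^{\bf k}$ with $c_{\bf k}\in o_K$ of absolute value $\leq 1$, converging in an annulus of the form \eqref{annulus}. In $\mathcal{O_E}[N_1/N_k,\ell]$ each variable $b_\beta=n_\beta-1$ for $\beta\neq\alpha$ satisfies an integral polynomial relation over the central subring $\varphi^{c_k}(\Lambda(\mathbb{Z}_p))\supseteq o_K\bs b_\alpha^{p^{c_k}}\js$, arising from $n_\beta^{p^{e_{\beta,k}}}\in N_k$. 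Consequently each monomial $\bb^{\bf k}$ expands uniquely as a finite sum $\sum_{h\in N_1/N_k}Q_{{\bf k},h}(b_\alpha,b_\alpha^{-1})\,h$ with $Q_{{\bf k},h}\in o_K[b_\alpha,b_\alpha^{-1}]$, and I set
\begin{equation*}
j_{int,k}(x):=\sum_{h\in N_1/N_k}\Bigl(\sum_{\bf k}c_{\bf k}\,Q_{{\bf k},h}(b_\alpha,b_\alpha^{-1})\Bigr)\,h,
\end{equation*}
provided each inner sum converges in $\mathcal{O_E}^\dagger$.

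The main analytic issue, and the step I expect to be the hard part, is precisely this convergence and overconvergence of each inner sum. For fixed $h$ the contributing $\bf k$'s are those whose $(k_\beta)_{\beta\neq\alpha}$-pattern lies over $h$, with a controlled range of $b_\alpha$-shifts per $\bf k$ (these shifts arise when the skew multiplication pushes powers of $b_\alpha$ out of $\bb^{\bf k}$ in order to move $b_\beta$'s to the right of $b_\alpha$'s). Using Lemma \ref{trivest} and the annulus condition $|b_\beta|\leq|b_\alpha|^r$ together with $|c_{\bf k}|\leq 1$, I will bound the $b_\alpha$-valuation of $c_{\bf k}Q_{{\bf k},h}$ from below by an affine function of the non-$\alpha$ part of $|\bf k|$, so that only finitely many $\bf k$'s contribute to any given $b_\alpha$-degree, and the resulting Laurent series $\sum_n a_n^{(h)}b_\alpha^n$ has coefficients in $o_K$ satisfying $|a_n^{(h)}|\rho^n\to 0$ as $n\to-\infty$ for some $\rho<1$ — the defining condition of $\mathcal{O_E}^\dagger$.

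Multiplicativity of $j_{int,k}$ is then formal: both $\mathcal{R}^{int}(N_1,\ell)$ and $\mathcal{O_E}^\dagger[N_1/N_k,\ell]$ carry the skew multiplication dictated by conjugation of $\iota(\mathbb{Z}_p)$ on $N_1$, and the map preserves the generators $b_\beta$ and $h\in N_1/N_k$. Compatibility across $k$ reduces to checking on monomials $\bb^{\bf k}$, where reducing $Q_{{\bf k},h'}$ for $h'\in N_1/N_{k+1}$ mapping to $h\in N_1/N_k$ visibly sums up to $Q_{{\bf k},h}$. For injectivity, if $j_{int}(x)=0$ then all inner sums vanish for every $k$; since $\bigcap_k N_k=\{1\}$, distinct monomials $\bb^{\bf k}$ become distinguishable at sufficiently high level $k$ (their leading contributions sit in pairwise distinct cosets of $N_k$), forcing all $c_{\bf k}=0$.
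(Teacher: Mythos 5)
There is a genuine gap at the step you yourself identify as the hard part. Your convergence mechanism is that the $b_\alpha$-valuation of $c_{\bf k}Q_{{\bf k},h}$ grows affinely in the non-$\alpha$ part of ${\bf k}$, so that ``only finitely many ${\bf k}$'s contribute to any given $b_\alpha$-degree''. This is false: for a root $\beta\neq\alpha$ the monomial $b_\beta^m$ reduces at level $k$ to $(\bar n_\beta-1)^m\in o_K[N_1/N_k]$, whose expansion has $Q_{{\bf k},h}$ of $b_\alpha$-degree $0$ for every $m$; the $b_\alpha$-degrees occurring in $Q_{{\bf k},h}$ are governed by $k_\alpha$ alone, not by $\sum_{\beta\neq\alpha}k_\beta$. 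Thus for an element such as $\sum_{m\geq1}b_\beta^m\in\mathcal{R}^{int}(N_1,\ell)$ infinitely many monomials contribute to the coefficient $a_0^{(h)}$, and your inner sums are genuinely infinite. The reason they nevertheless converge is $p$-adic, not $b_\alpha$-adic: the augmentation ideal of the finite $p$-group algebra $o_K[N_1/N_k]$ is topologically nilpotent, so high degree in the variables $b_\beta$ produces divisibility by high powers of $p$; and to get overconvergence of the tail $n\to-\infty$ one must combine this $p$-adic gain with the annulus condition. Since your multiplicativity, compatibility-in-$k$ and injectivity arguments all lean on the (false) finite-sum structure of the $a_n^{(h)}$, they need reworking as well; in particular the injectivity sketch (``distinct monomials become distinguishable at high level'') does not by itself exclude cancellation inside these $p$-adically convergent infinite sums. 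A smaller but real problem is the continuity claim: the weak topology on $\Lambda_\ell(N_0)$ is generated by the neighbourhoods $\mathcal{M}(r)=\mathcal{M}_\ell(N_0)^r+\mathcal{M}(N_0)^r$, not by the ideals $I_k$ (for instance $p^n\to0$ weakly but $p^n\notin I_1$), so factoring through the $I_k$ does not give continuity.

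For comparison, the paper avoids these issues by first restricting to $\mathfrak{R}_0^{int}$, where a Laurent series can be regrouped as $\sum_{n\in\mathbb{Z}}b_\alpha^n f_n$ with $f_n\in\Lambda(N_1)$ (convergence of each $f_n$ is automatic from completeness of the Iwasawa algebra, and $f_n\to0$ as $n\to-\infty$ by the norm condition), which maps to $\Lambda_\ell(N_0)$; continuity is checked against $\mathcal{M}(r)$ via the $\rho$-norms, multiplicativity is reduced to monomials by continuity, overconvergence at each level $l$ follows from continuity of the reduction $\Lambda(N_1)\to o[N_1/N_l]$ in the $\rho$-norm together with $\rho^n\|f_n\|_\rho\to0$, and a general $x\in\mathcal{R}^{int}(N_1,\ell)$ is handled by choosing $t$ with $\varphi_t(x)\in\mathfrak{R}_0^{int}$ and setting $j_{int}(x)=\varphi_t^{-1}(j_{int,0}(\varphi_t(x)))$. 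If you want to keep your level-by-level format, you would have to replace your valuation claim by a quantitative statement of the form: the image of $\prod_{\beta\neq\alpha}b_\beta^{k_\beta}$ in $o_K[N_1/N_k]$ lies in $p^{\lfloor(\sum_\beta k_\beta)/C_k\rfloor}o_K[N_1/N_k]$, and then use the annulus condition (equivalently, that $\varphi_t(x)$ converges in $\mathfrak{R}_0$) to show $|a_n^{(h)}|\rho_*^{\,n}\to0$ as $n\to-\infty$ for a suitable $\rho_*<1$ depending on $k$; that is essentially a coordinate version of the paper's argument.
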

\begin{proof}
We proceed in 3 steps. In Step 1 we construct a map $j_{int,0}= j_{int\mid\mathfrak{R}_0^{int}}\colon\mathfrak{R}^{int}_0\to \Lambda_\ell(N_0)$ which is a priori continuous and $o_K$-linear. In Step 2 we show that $j_{int,0}$ is multiplicative hence a ring homomorphism. In Step 3 we extend it to $\mathcal{R}^{int}(N_1,\ell)$ and show that the image lies in $\mathcal{O}_\mathcal{E}^\dagger\bs N_1,\ell\js\subset\mathcal{O_E}\bs N_1,\ell\js=\Lambda_\ell(N_0)$.

\emph{Step 1.} By Lemma \ref{4} and Remark \ref{int} we may write any element in $\mathfrak{R}^{int}_0$ in a Laurent series expansion $\sum_{{\bf k}\in\mathbb{N}^{\Phi^+\setminus\{\alpha\}}\times\mathbb{Z}}c_{\bf
  k}{\bf b}^{\bf k}$ with coefficients $c_{\bf k}$ in $o_K$. So we may collect all the terms containing $b_\alpha^{k_\alpha}$ for some fixed $k_\alpha$ into an element of the Iwasawa algebra $\Lambda(N_1)$ to obtain an expansion $\sum_{n\in\mathbb{Z}}b_\alpha^nf_n$ with $f_n\in\Lambda(N_1)$. These power series satisfy the convergence property that there exists a real number $p^{-1}<\rho_1<1$ such that $\rho^n\|f_n\|_{\rho}\to0$ as $|n|\to\infty$ for all $\rho_1<\rho<1$. In particular, if $n\to-\infty$ then $f_n\to 0$ in the compact topology of $\Lambda(N_1)$. Hence the sum $\sum_nb_\alpha^nf_n$ also converges in $\Lambda_\ell(N_0)$. This way we obtained a right $\Lambda(N_0)$-linear injective map $j_{int,0}\colon\mathfrak{R}_0^{int}\to\Lambda_\ell(N_0)$. 

Recall that the weak topology (see \cite{SVe}, \cite{SVi}, \cite{SVZ} for instance) on $\Lambda_\ell(N_0)$ is defined by the open neighbourhoods of $0$ of the form $\mathcal{M}(r)=\mathcal{M}_\ell(N_0)^r+\mathcal{M}(N_0)^r$ where $\mathcal{M}_\ell(N_0)=\Lambda_\ell(N_0)\mathcal{M}(N_1)$ denotes the maximal ideal of $\Lambda_\ell(N_0)$ and $\mathcal{M}(N_i)$ denotes the maximal ideal of $\Lambda(N_i)\subseteq \Lambda_\ell(N_0)$ ($i=0,1$). For any fixed $p^{-1}<\rho_1<\rho<1$ the preimage of $\mathcal{M}(r)$ in $\mathfrak{R}^{int}_{0}\cap D_{[\rho_1,1)}(N_1,\alpha)$ contains the open ball $\{x\mid \|x\|_\rho<p^{-r}\}$. Indeed, if $x=\sum_{n\in\mathbb{Z}}b_\alpha^nf_n$ then for any $n<0$ we have $\|f_n\|_\rho<p^{-r}$ hence $f_n\in \mathcal{M}(N_1)^r$ and $b_\alpha^nf_n\in\mathcal{M}_\ell(N_0)$. On the other hand, the positive part $\sum_{n\geq 0}b_\alpha^nf_n$ lies in $\Lambda(N_0)$ and has $\rho$-norm smaller than $p^{-r}$ therefore lies in $\mathcal{M}(N_0)^r$. Hence the continuity.

\emph{Step 2.} Now by the continuity and linearity of $j_{int,0}$ it suffices to show that it is multiplicative on monomials ${\bf b}^{\bf k}$. Moreover, each monomial is a linear combination of elements of the form $b_\alpha^ng$ with $g\in N_0$. In order to expand the product $(b_\alpha^{n_1}g_1)(b_\alpha^{n_2}g_2)$ into a skew Laurent series it suffices to expand $g_1b_\alpha^{n_2}$ with $n_2<0$. However, if $g_1b_\alpha^{n_2}=\sum_n b_\alpha^nh_n$ is the expansion in $\mathcal{R}^{int}(N_1,\ell)$ then $\sum_{|n|<n_0}b_\alpha^nh_nb_\alpha^{-n_2}$ tends to $g_1$ (as $n_0\to+\infty$) in the topology of $\mathfrak{R}^{int}_0$ (induced by the norms) hence also in the weak topology. Therefore the expansion in $\Lambda_\ell(N_0)$ is also $g_1b_\alpha^{n_2}=\sum_n b_\alpha^nh_n$. So the above constructed map $j_{int,0}$ is indeed a ring homomorphism as claimed.

\emph{Step 3.} Finally, take an element $x\in\mathcal{R}^{int}(N_1,\ell)$. There exists an element $1\leq_\alpha
t\in T_+$ such that $\varphi_t(x)$ lies in the image of the composite map 
\begin{equation*}
\mathfrak{R}^{int}_{0,r_t(\cdot)}(\varphi_t(N_0),\alpha)\hookrightarrow
\mathfrak{R}^{int}_0(N_0,\alpha)\hookrightarrow \mathcal{R}^{int}(N_1,\ell)
\end{equation*}
where the first arrow is induced by the inclusion $\varphi_t(N_0)\subseteq
N_0$. Now if we reduce
$j_{int,0}(\varphi_t(x))\in\Lambda_\ell(N_0)$
modulo the ideal generated by $N_l-1$ for some integer $l\geq 1$ then we
obtain an element in $\varphi_t(\mathcal{O}_\mathcal{E}^\dagger[N_1/N_l,\ell])$. Indeed, $\varphi_t(\mathcal{O_E}[N_1/N_l,\ell])$ is a closed subspace in $\mathcal{O_E}[N_1/N_l,\ell]$ and all the monomials $j_{int,0}(\varphi_t({\bf b}^{\bf k}))$ map into this subspace under the reduction modulo $(N_l-1)$. Hence the image lies in $\varphi_t(\mathcal{O_E}[N_1/N_l,\ell]$. Moreover, by the convergence property of elements in $\mathfrak{R}^{int}_0$, we may expand 
\begin{equation*}
\varphi_t(x)=\sum_{n\in\mathbb{Z}}b_\alpha^nf_n 
\end{equation*}
with $f_n\in\Lambda(N_1)$ and $\rho^n\|f_n\|_\rho\to0$ as $n\to\infty$ for all $\rho_1<\rho<1$ and a fixed $p^{-1}<\rho_1<1$ depending on $x$. Since the reduction map $\Lambda(N_1)\to o[N_1/N_l]$ is continuous in the $\rho$-norm, we obtain that the reduction of $j_{int,0}(\varphi_t(x))$ modulo $(N_l-1)$ also lies in $\mathcal{O}_\mathcal{E}^\dagger[N_1/N_l,\ell]$. Hence we have $j_{int,0}(\varphi_t(x))\pmod{N_l-1}\in\varphi_t(\mathcal{O}_\mathcal{E}^\dagger[N_1/N_l,\ell])=\varphi_t(\mathcal{O_E}[N_1/N_l,\ell])\cap \mathcal{O}_\mathcal{E}^\dagger[N_1/N_l,\ell]$. Taking the limit we see (using \eqref{lim}) that $j_{int,0}(\varphi_t(x))$ lies in
\begin{equation*}
\varprojlim_l\varphi_t(\mathcal{O}_\mathcal{E}^\dagger[N_1/N_l,\ell])=\varphi_t(\mathcal{O}_\mathcal{E}^\dagger\bs
N_1,\ell\js)\ .
\end{equation*}
So we put $j_{int}(x):=\varphi_t^{-1}(j_{int,0}(\varphi_t(x)))$. This extends the ring homomorphism $j_{int,0}$ to a continuous ring homomorphism $j_{int}\colon\mathcal{R}^{int}(N_1,\ell)\hookrightarrow\mathcal{O}_\mathcal{E}^\dagger\bs N_1,\ell\js\subset\Lambda_\ell(N_0)$ by Lemma \ref{rightfilt}. Moreover, $j_{int}$ is $T_+$-equivariant as it respects power series expansions.
\end{proof}

Now the following proposition compares $\mathcal{R}(N_1,\ell)$ with the previous
construction $\mathcal{R}\bs N_1,\ell\js$. 

\begin{pro}\label{j}
There exists a natural $T_{+}$-equivariant ring homomorphism
\begin{equation*}
j\colon\mathcal{R}(N_1,\ell)\to \mathcal{R}\bs N_1,\ell\js
\end{equation*}
with dense image.
\end{pro}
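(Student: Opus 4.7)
The plan is to adapt the three-step construction of Lemma \ref{intmult}, replacing $\mathcal{O}_{\mathcal{E}}^\dagger\bs N_1,\ell\js$ with $\mathcal{R}\bs N_1,\ell\js$ and allowing unbounded coefficients. The map $j$ will be defined via the Laurent series description \eqref{expandR}: given $x\in\mathcal{R}(N_1,\ell)$, reduce its Laurent expansion modulo $(N_l-1)$ for each $l$ and assemble into an element of the projective limit $\varprojlim_l\mathcal{R}[N_1/N_l,\ell]$.

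First I would construct an intermediate map $j_0\colon\mathfrak{R}_0(N_0,\alpha)\to\mathcal{R}\bs N_1,\ell\js$. For $x=\sum_\bk c_\bk\bb^\bk\in\mathfrak{R}_0$, regroup by the $\alpha$-coordinate as $x=\sum_{n\in\mathbb{Z}}b_\alpha^n g_n$ with $g_n$ in an appropriate completion of $D(N_1,K)$, then apply the continuous augmentation-based reduction $\pi_l\colon D(N_1,K)\twoheadrightarrow K[N_1/N_l]$ and collect coefficients by basis elements $h\in J(N_1/N_l)$ to obtain
\begin{equation*}
j_{0,l}(x)\ :=\ \sum_{h\in J(N_1/N_l)}\Bigl(\sum_{n\in\mathbb{Z}}c_{n,h}^{(l)}\,b_\alpha^n\Bigr)\,h\ \in\ \mathcal{R}[N_1/N_l,\ell].
\end{equation*}
The essential analytic check is that each inner Laurent series lies in $\mathcal{R}$, which follows from the continuity of $\pi_l$ in each $\|\cdot\|_\rho$-norm together with the convergence estimates $|c_\bk|\rho^{\sum_\beta k_\beta}\to 0$ characterizing $\mathfrak{R}_0$ via Proposition \ref{4}. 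Compatibility in $l$ gives $j_0$, and multiplicativity follows as in Step~2 of Lemma \ref{intmult}: by continuity in the weak topology of the target it reduces to elementary products $(b_\alpha^{n_1}g_1)(b_\alpha^{n_2}g_2)$ with $g_i\in N_1$, where both sides agree as formal skew products.

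To extend to all of $\mathcal{R}(N_1,\ell)=\varinjlim_t\mathfrak{R}_{0,r_t(\cdot)}(\varphi_t(N_0),\alpha)$, I would repeat Step~3 of Lemma \ref{intmult}. For $x\in\mathcal{R}(N_1,\ell)$, by \eqref{expandR} there exists $1\leq_\alpha t\in T_+$ with $\varphi_t(x)\in\mathfrak{R}_0$; a monomial-by-monomial analysis shows $j_{0,l}(\varphi_t(x))\in\varphi_t(\mathcal{R}[N_1/N_l,\ell])$, so on taking the projective limit we get $j_0(\varphi_t(x))\in\varphi_t(\mathcal{R}\bs N_1,\ell\js)$. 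Using the injectivity of $\varphi_t$ on $\mathcal{R}\bs N_1,\ell\js$ (the analogue, in the Robba setting, of the Proposition preceding Definition \ref{phiringH_0}), set $j(x):=\varphi_t^{-1}(j_0(\varphi_t(x)))$; independence of the choice of $t$ follows from Lemma \ref{rightfilt}. The $T_+$-equivariance is then automatic, since both $\mathcal{R}(N_1,\ell)$ and $\mathcal{R}\bs N_1,\ell\js$ carry their $T_+$-action through the same formulas on Laurent expansions, which $j$ respects by construction.

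For density, fix $y=(y_l)_l\in\mathcal{R}\bs N_1,\ell\js$ and a basic open neighborhood $y+\pi_{l_0}^{-1}(U)$ in the projective-limit topology. Write $y_{l_0}=\sum_{h\in J(N_1/N_{l_0})}f_h\cdot h$ with $f_h\in\mathcal{R}$ and choose arbitrary lifts $\tilde n_h\in N_1$ of $h$; then $\tilde y:=\sum_h f_h\tilde n_h$ is a \emph{finite} sum. Since each $\tilde n_h=\prod_\beta(1+b_\beta)^{x_{\beta,h}}$ is an integral power series in the $b_\beta$ ($\beta\neq\alpha$), the finite sum $\tilde y$ converges in a polyannulus of the form in Remark \ref{expandRell}, and hence $\tilde y\in\mathcal{R}(N_1,\ell)$. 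By construction $\pi_{l_0}(j(\tilde y))=y_{l_0}$, so $j(\tilde y)\in y+\pi_{l_0}^{-1}(0)\subseteq y+\pi_{l_0}^{-1}(U)$, establishing density. The main obstacle throughout is the convergence analysis at the construction stage: unlike the integral case of Lemma \ref{intmult} where coefficients are bounded by $1$, in the $K$-setting one must verify that regrouping the Laurent series after the quotient $\pi_l$ preserves the Robba-type growth condition on the remaining $b_\alpha$-variable, which requires careful bookkeeping of the growth of $|c_\bk|$ in both the $\alpha$- and non-$\alpha$-directions.
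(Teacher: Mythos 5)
Your proposal is correct and follows essentially the same route as the paper's proof: regroup elements of $\mathfrak{R}_0$ as $\sum_{n\in\mathbb{Z}}b_\alpha^n f_n$, define the level-wise reductions $\pi_H\colon\mathfrak{R}_0\to\mathcal{R}[N_1/H,\ell]$ using continuity of $D(N_1,K)\to K[N_1/H]$ in the $\rho$-norms, check multiplicativity by reducing to monomials, extend to all of $\mathcal{R}(N_1,\ell)$ by the $\varphi_t$-twisting of Step 3 of Lemma \ref{intmult}, and get density from surjectivity at each finite level. The only cosmetic differences are that the paper settles multiplicativity on monomials via a commutative diagram with the integral map $j_{int,0}$ of Lemma \ref{intmult} (rather than your direct comparison of formal skew products, whose "weak topology" phrasing would need to be replaced by exactly such an appeal to the integral case), and that your explicit lifting of $y_{l_0}=\sum_h f_h\cdot h$ to the finite sum $\sum_h f_h\tilde n_h\in\mathfrak{R}_0$ is precisely the surjectivity of $\pi_H$ that the paper asserts.
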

\begin{proof}
At first we construct the map $j_0=j_{\mid\mathfrak{R}_0}$ on $\mathfrak{R}_0\subset \mathcal{R}(N_1,\ell)$ with dense image. We are
going to show that for any open characteristic subgroup $H\leq N_1$ we have an
isomorphism
$\mathfrak{R}_0/\mathfrak{R}_0(H-1)\cong\mathcal{R}[N_1/H,\ell]$. Note that
$N_1$ being a compact $p$-adic Lie group, $N_1$ has a system of neighbourhoods
of $1$ consisting of open uniform characteristic subgroups (in fact $N_1$ is 
uniform---since so is $N_0$ by assumption---and one can take repeatedly the
Frattini subgroups of $N_1$ which are characteristic subgroups, ie.\ stable under all the continuous automorphisms of $N_1$). So we may assume without loss of generality
that $H$ is uniform with topological generators $h_1,h_2,\dots,h_d$ with $d=\dim N_1$ as a $p$-adic Lie group.

Under the parametrization in Prop.\ \ref{4} the elements of $\mathfrak{R}_0$
can be written as power series $\sum_{n\in\mathbb{Z}}b_\alpha^nf_n$
with $f_n\in D(N_1,K)$ and the convergence property that there exists a real number $\rho_1<1$ such that
$\rho^n\|f_n\|_\rho\rightarrow 0$ (as $|n|\rightarrow\infty$) for all
$\rho_1\leq\rho<1$. Now note that we have $D(N_1,K)=\bigoplus_{u\in J(N_1/H)}uD(H,K)$. Hence the right ideal $D(N_1,K)(H-1)$ in $D(N_1,K)$ is generated by the elements $h_i-1$ for $1\leq i\leq d$ and it is the kernel of the natural projection $\pi_H\colon D(N_1,K)\to D(N_1/H)=K[N_1/H]$. Moreover, this quotient map factors through the inclusion $D(N_1,K)\hookrightarrow D_{[0,\rho]}(N_1,K)$ for any $p^{-1}<\rho<1$. Hence $\rho^n\|\pi_H(f_n)\|\to 0$ where $\|x\|:=\max_u |x_u|$ with $x=\sum_{u\in N_1/H}x_uu$, $x_u\in K$. Therefore we obtain a map
\begin{eqnarray*}
\pi_H\quad\colon\quad\mathfrak{R}_0&\to&\mathcal{R}[N_1/H,\ell]=\bigoplus_{u\in N_1/H}\mathcal{R}u\\
\sum_{n\in\mathbb{Z}}b_\alpha^nf_n&\mapsto&\sum_{u\in N_1/H}\sum_{n\in\mathbb{Z}}\pi_H(f_n)_uT^nu\ .
\end{eqnarray*}
A priori this map is only known to be $K$-linear, continuous, and surjective between topological $K$-vectorspaces. So for the multiplicativity it suffices to show that $\pi_H({\bf b}^{{\bf k}_1}{\bf b}^{{\bf k}_2})=\pi_H({\bf b}^{{\bf k}_1})\pi_H({\bf b}^{{\bf k}_2})$ for monomials ${\bf b}^{{\bf k}_i}$ with ${\bf k}_i\in\mathbb{N}\times\mathbb{Z}^{d}$ ($i=1,2$). On the other hand, these monomials are contained in the subring $\mathfrak{R}_0^{int}$. By Lemma \ref{intmult} we have a commutative diagram 
\begin{equation*}
\xymatrix{
\mathfrak{R}_0^{int}\ar[r] \ar[d]_{\pi_{H,\mathcal{O}_\mathcal{E}^\dagger\bs N_1,\ell\js}} &\mathfrak{R}_0 \ar[d]_{\pi_{H}}\\
\mathcal{O}_\mathcal{E}^\dagger [N_1/H,\ell]\ar[r] & \mathcal{R}[N_1/H,\ell]
}
\end{equation*}
of $o$-modules such that all the maps are ring homomorphisms except possibly for $\pi_H$. However, from the commutativity of the diagram it follows that also $\pi_H$ is multiplicative on monomials therefore a ring homomorphism. By taking the projective limit of maps $\pi_H$ we obtain a ring homomorphism $j_0\colon\mathfrak{R}_0\to \mathcal{R}\bs N_1,\ell\js$ with dense image and extending $j_{int,0}\colon\mathfrak{R}_0^{int}\hookrightarrow \mathcal{O}_\mathcal{E}^\dagger$.

Finally, the homomorphism $j_0$ is extended to $\mathcal{R}(N_1,\ell)$ as in the proof of Lemma \ref{intmult}. The $T_+$-equivariance is clear on monomials by Lemma \ref{intmult} and follows in general from the continuity and linearity.
\end{proof}

\begin{rem}\label{log}
The above constructed map $j\colon\mathcal{R}(N_1,\ell)\to\mathcal{R}\bs N_1,\ell\js$ is not injective in general. Indeed, for any root $\beta\neq\alpha$ in $\Phi^+$ the element $\log(n_\beta)=\log(1+b_\beta)$ lies in $D(N_1)\subset \mathcal{R}(N_1,\ell)$. It is easy to see that $\log(1+b_\beta)$ is divisible by $\varphi^r(b_\beta)$ for any nonnegative integer $r$. Indeed, we clearly have $b_\beta\mid\log(1+b_\beta)$. Applying $\varphi^r$ on the both sides of the divisibility we obtain 
\begin{equation*}
\varphi^r(b_\beta)\mid\varphi^r(\log(1+b_\beta))=\log(1+b_\beta)^{p^{rm_\beta}}=p^{rm_\beta}\log(1+b_\beta)\mid\log(1+b_\beta)
\end{equation*}
as $p^{rm_\beta}$ is invertible in $\mathcal{R}$. Therefore $\log(1+b_\beta)$ lies in the kernel of $\pi_H$ for all $H=N_r$ hence also in the kernel of $j$.
\end{rem}

\begin{rem}
Note that via the inclusion $\mathcal{O}_{\mathcal{E}}^\dagger\subseteq
\mathcal{R}$ we also have $\mathcal{O}_{\mathcal{E}}^\dagger\bs
N_1,\ell\js\subseteq \mathcal{R}\bs N_1,\ell\js$. However, if $N_1\neq 1$ then we have $j_{int}(\mathcal{R}^{int}(N_1,\ell))\neq j(\mathcal{R}(N_1,\ell))\cap \mathcal{O}_\mathcal{E}^\dagger\bs N_1,\ell\js\subset\mathcal{R}\bs N_1,\ell\js$.
\end{rem}
\begin{proof}
Assume $N_1\neq 1$, so we have a positive root $\beta\neq\alpha\in\Phi^+$. We proceed in 3 steps. In Step 1 we are going to construct an element $x\in\mathcal{R}(N_1,\ell)$ with several properties. In Step 2 we are going to show that $j(x)$ lies in $\mathcal{O}_\mathcal{E}^\dagger\bs N_1,\ell\js\subset\mathcal{R}\bs N_1,\ell\js$. In Step 3 we prove that $j(x)$ does not lie in $j_{int}(\mathcal{R}^{int}(N_1,\ell))$. Note that the other inclusion $j_{int}(\mathcal{R}^{int}(N_1,\ell))\subset j(\mathcal{R}(N_1,\ell))\cap \mathcal{O}_\mathcal{E}^\dagger\bs N_1,\ell\js$ is obvious. 

\emph{Step 1.} We denote by $s_n:=\sum_{i=1}^n(-1)^{i+1}\frac{b_\beta^i}{i}$ the $n$th estimating sum of $\log(1+b_\beta)\in\mathcal{R}(N_1,\ell)$. Note that $k_n:=[\log_pn]$ is the smallest positive integer such that 
\begin{equation}\label{k_n}
p^{k_n}s_n\in\mathbb{Z}_p[N_{\beta,0}]\subseteq \mathcal{R}(N_1,\ell)
\end{equation}
where $[\cdot]$ denotes the integer part of a real number. We further choose a sequence of real numbers $p^{-1}<\rho_1<\dots<\rho_n<\dots<1$ in $p^{\mathbb{Q}}$ such that $\lim_{n\to\infty}\rho_n=1$. Now for any fixed positive integer $n$ let $i_n$ be the smallest positive integer satisfying the following properties
\begin{align}
\log_{\rho_{n-1}}(\|p^{k_{i_{n-1}}}\log(1+b_\beta)\|_{\rho_{n-1}})+1<\log_{\rho_n}(\|p^{k_{i_n}}\log(1+b_\beta)\|_{\rho_n})\ ;\notag\\
\frac{\|\log(1+b_\beta)\|_{\rho_n}}{p^{n}}>\|\log(1+b_\beta)-s_{i_n}\|_{\rho_n}\ ;\label{i_n}\\
p^{k_{i_n}/2}>\|\log(1+b_\beta)\|_{\rho_n}\ ;\notag\\
\|\varphi^i(\log(1+b_\beta))\|_{\rho_j}>\|\varphi^i(\log(1+b_\beta)-s_{i_n})\|_{\rho_j}\notag
\end{align}
for all $1\leq i,j\leq n$. Such an $i_n$ exists as for any fixed $1\leq i,j\leq n$ we have $\lim_{k\to\infty}\|\varphi^i(\log(1+b_\beta)-s_{k})\|_{\rho_j}=0$. The first condition in \eqref{i_n} makes the definition of $i_n$ inductive. As a consequence, we have $\|\log(1+b_\beta)\|_{\rho_n}=\|s_{i_n}\|_{\rho_n}$ by the ultrametric inequality. Now define $j_n\in\mathbb{Z}$ so that
\begin{equation}\label{j_n}
\rho_n^{j_n+1}<\frac{\|s_{i_n}\|_{\rho_n}}{p^{k_{i_n}}}=\|p^{k_{i_n}}s_{i_n}\|_{\rho_n}=\|p^{k_{i_n}}\log(1+b_\beta)\|_{\rho_n}\leq \rho_n^{j_n}\ .
\end{equation}
(In other words $j_n=[\log_{\rho_n}(\|p^{k_{i_n}}\log(1+b_\beta)\|_{\rho_n})]$.) By \eqref{k_n} we have $j_n\geq 0$. Moreover, by the first condition in \eqref{i_n} the sequence $(j_n)_n$ is strictly increasing: $j_{n-1}<j_n$ for all $n>1$. On the other hand, $(-1)^{p^{k_{i_n}}}b_\beta^{p^{k_{i_n}}}$ is a summand in $p^{k_{i_n}}s_{i_n}$, therefore we have $\rho_n^{p^{k_{i_n}}}\leq \|p^{k_{i_n}}s_{i_n}\|_{\rho_n}\leq \rho_n^{j_n}$ whence 
\begin{equation}\label{j_ni_n}
j_n\leq p^{k_{i_n}}\leq i_n\ . 
\end{equation}
Put $x:=\sum_{n=1}^{\infty}p^{k_{i_n}}(\log(1+b_\beta)-s_{i_n})b_\alpha^{-j_n}$. Our goal in this step is to show that the sum $x$ converges in $\mathfrak{R}_0(N_0,\alpha)\subset \mathcal{R}(N_1,\ell)$. For this it suffices to verify that for any fixed $k\geq 1$ we have $\|p^{k_{i_n}}(\log(1+b_\beta)-s_{i_n})b_\alpha^{-j_n}\|_{\rho_k}\to 0$ as $n\to\infty$. Note that in the power series expansion of $\log(1+b_\beta)-s_{i_n}$ all the terms have degree $>i_n\geq j_n$ by \eqref{j_ni_n}. Therefore in the power series expansion of $x$ all the terms have positive degree. In particular, for $k<n$ we have $\|y\|_{\rho_k}\leq \|y\|_{\rho_n}$ whenever $y$ is a monomial in the expansion of $x$. By \eqref{i_n} and \eqref{j_n} we obtain
\begin{align*}
\|p^{k_{i_n}}(\log(1+b_\beta)-s_{i_n})b_\alpha^{-j_n}\|_{\rho_k}\leq \|p^{k_{i_n}}(\log(1+b_\beta)-s_{i_n})b_\alpha^{-j_n}\|_{\rho_n}<\\
<\frac{\|p^{k_{i_n}}\log(1+b_\beta)\|_{\rho_n}}{p^n}\rho_n^{-j_n}\leq \frac{1}{p^n}
\end{align*}
for $k<n$. Hence we have $x\in\mathfrak{R}_0(N_0,\alpha)\subset \mathcal{R}(N_1,\ell)$.

\emph{Step 2.} Note that by Remark \ref{log} $\log(1+b_\beta)$ lies in the kernel of $\pi_H$ for all open normal subgroup $H\leq N_1$. Hence by the continuity of $\pi_H$ we obtain $\pi_H(x)=\sum_{n=1}^{\infty}\pi_H(-p^{k_{i_n}}s_{i_n}b_\alpha^{-j_n})\in\mathcal{O}_{\mathcal{E}}^\dagger[N_1/H,\ell]\subseteq \mathcal{R}[N_1/H,\ell]$ as we have $-p^{k_{i_n}}s_{i_n}\in\mathbb{Z}_p[N_1]$ and $\mathcal{O}_{\mathcal{E}}^\dagger$ is closed in $\mathcal{R}$.

\emph{Step 3.} Assume finally that $j_{int}(z)=j(x)$ for some $z\in \mathcal{R}^{int}(N_1,\ell)$. Note that both $z$ and $j(x)\in\mathcal{O}_{\mathcal{E}}^\dagger\bs N_1,\ell\js\subset \mathcal{O_E}\bs N_1,\ell\js$ have a power series expansion. By the injectivity of $j_{int}$ these expansions are equal. Hence put $z=\sum_{{\bf k}\in\mathbb{Z}\times\mathbb{N}^{\Phi^+\setminus\{\alpha\}}}d_{\bf k}{\bf b}^{\bf k}$ with $d_{\bf k}\in\mathbb{Z}_p$. By the definition of $\mathcal{R}^{int}(N_1,\ell)$ there exists an element $t\in T_+$ such that $\varphi_t(z)$ lies in $\mathfrak{R}_0^{int}$. This means that there exists a positive integer $K_0$ such that for all fixed $k\geq K_0$ and $\varepsilon>0$ we have $\|\varphi_t(d_{\bf k}{\bf b}^{\bf k})\|_{\rho_k}<\varepsilon$ for all but finitely many ${\bf k}\in\mathbb{Z}\times\mathbb{N}^{\Phi^+\setminus\{\alpha\}}$. In particular, for any fixed $k\geq K_0$ we have 
\begin{equation*}
\|\varphi_t(-p^{k_{i_n}}s_{i_n}b_\alpha^{-j_n})\|_{\rho_k}<\varepsilon
\end{equation*}
for all but finitely many positive integers $n$ since the sequence $j_n$ is strictly increasing by construction therefore the terms in $x=\sum_{n=1}^{\infty}p^{k_{i_n}}(\log(1+b_\beta)-s_{i_n})b_\alpha^{-j_n}$ cannot cancel each other. Now we clearly have $\|\varphi_t(b_\alpha)\|_{\rho_k}\leq\rho_k$. On the other hand, we compute (for $n>\max(k,m(\beta,t))$ large enough) 
\begin{equation*}
\|\varphi_t(-p^{k_{i_n}}s_{i_n})\|_{\rho_k}=\frac{\|\varphi^{m(\beta,t)}(s_{i_n})\|_{\rho_k}}{p^{k_{i_n}}}=\frac{\|\varphi^{m(\beta,t)}(\log(1+b_\beta))\|_{\rho_k}}{p^{k_{i_n}}}=\frac{\|\log(1+b_\beta)\|_{\rho_k}}{p^{m(\beta,t)+k_{i_n}}}\ .
\end{equation*}
Hence we obtain
\begin{align*}
\varepsilon>\|\varphi_t(-p^{k_{i_n}}s_{i_n}b_\alpha^{-j_n})\|_{\rho_k}\geq\frac{\|\log(1+b_\beta)\|_{\rho_k}}{p^{m(\beta,t)+k_{i_n}}\rho_k^{j_n}}>\frac{\rho_k\|\log(1+b_\beta)\|_{\rho_k}}{p^{m(\beta,t)+k_{i_n}}\|p^{k_{i_n}}\log(1+b_\beta)\|_{\rho_n}^{\log_{\rho_n}\rho_k}}=\\
=\frac{\rho_k\|\log(1+b_\beta)\|_{\rho_k}}{p^{m(\beta,t)}}\frac{p^{k_{i_n}(\log_{\rho_n}\rho_k-1)}}{\|\log(1+b_\beta)\|_{\rho_n}^{\log_{\rho_n}\rho_k}}>\frac{\rho_k\|\log(1+b_\beta)\|_{\rho_k}}{p^{m(\beta,t)}}p^{k_{i_n}(1/2\cdot\log_{\rho_n}\rho_k-1)}
\end{align*}
using \eqref{i_n} and \eqref{j_n}. This is a contradiction as the right hand side above tends to $\infty$ as $n\to\infty$. Therefore $j(x)$ is not in the image of $j_{int}$ as claimed.
\end{proof}

\begin{rem}
The elements of $\mathcal{R}\bs N_1,\ell\js$ cannot be expanded as a skew Laurent series of the form $\sum_{{\bf k}\in\mathbb{Z}^{\Phi^+}}d_{\bf k}{\bf b^k}$ in general. Indeed, the sum $\sum_{n=1}^{\infty}\varphi^n(b_\beta)/p^{2^n}=\sum_{n=1}^{\infty}((b_\beta+1)^{p^n}-1)/p^{2^n}$ converges in $\mathcal{R}\bs N_1,\ell\js$ for any simple root $\beta\neq\alpha$ but does not have a skew Laurent-series expansion as the coefficient of $b_\beta$ in its expansion would be the non-convergent sum $\sum_{n=1}^\infty p^{n-2^n}$.
\end{rem}

We end this section by a diagram showing all the rings constructed.
\begin{equation*}
\xymatrix{
\mathcal{O_E}\ar@{^{(}->}[rrr] & & &\mathcal{O_E}\bs N_1,\ell\js=\Lambda_\ell(N_0)\\
\mathcal{O}_\mathcal{E}^\dagger\ar@{^{(}->}[u]\ar@{^{(}->}[r]\ar@{^{(}->}[d] & \mathfrak{R}_0^{int}(N_0,\alpha)\ar@{^{(}->}[r]\ar@{^{(}->}[d] & \mathcal{R}^{int}(N_1,\ell)\ar@{^{(}->}[r]^{j_{int}}\ar@{^{(}->}[d] & \mathcal{O}_\mathcal{E}^\dagger\bs N_1,\ell\js\ar@{^{(}->}[u]\ar@{^{(}->}[d]\\
\mathcal{E}^\dagger\ar@{^{(}->}[r]\ar@{^{(}->}[d] & \mathfrak{R}_0^{bd}(N_0,\alpha)\ar@{^{(}->}[r]\ar@{^{(}->}[d] & \mathcal{R}^{bd}(N_1,\ell)\ar@{^{(}->}[r]^{j_{int}\otimes_{\mathbb{Z}_p}\mathbb{Q}_p}\ar@{^{(}->}[d] & \mathcal{E}^{\dagger}\bs N_1,\ell\js\ar@{^{(}->}[d]\\
\mathcal{R}\ar@{^{(}->}[r] & \mathfrak{R}_0(N_0,\alpha)\ar@{^{(}->}[r] & \mathcal{R}(N_1,\ell)\ar[r]^{j} & \mathcal{R}\bs N_1,\ell\js
}
\end{equation*}

Here $ \mathcal{R}(N_1,\ell)$ consists of Laurent series $\sum_{\bf k}c_{\bf
  k}{\bf b}^{\bf k}$ with $c_\bk\in K$ that converge on the open annulus of the form
\begin{equation*}
\left\{\rho_2<|z_\alpha|<1,\ |z_{\beta}|\leq |z_\alpha|^r\text{ for
}\beta\in\Phi^+\setminus\{\alpha\}\right\}
\end{equation*}
for some $0<\rho_2<1$ and $1\leq r\in\mathbb{Z}$. The elements of $\mathfrak{R}_0(N_0,\alpha)$ are exactly those for which we can take $r=1$. Their analogous integral (resp.\ bounded) versions consist of those Laurent series having the same convergence condition for which $c_\bk\in o_K$ for all $\bk\in\mathbb{Z}^{\{\alpha\}}\times\mathbb{N}^{\Phi^+\setminus\{\alpha\}}$ (resp.\ for which $\{c_\bk\mid \bk\in\mathbb{Z}^{\{\alpha\}}\times\mathbb{N}^{\Phi^+\setminus\{\alpha\}}\}\subset K$ bounded).

\subsection{Towards an equivalence of categories for overconvergent and Robba rings}\label{overconv}

Note that Propositions \ref{equivcat} and \ref{T_+equiv} apply in both the
cases $R=\mathcal{O_E}$ and $R=\mathcal{O}^\dagger_{\mathcal{E}}$. In both
cases the category $\mathfrak{M}(R,\varphi)$ is the category of \emph{\'etale}
$\varphi$-modules over $R$. Moreover, by the main result of \cite{CC} (see also \cite{Ke2}), we also have an
equivalence of categories between finite free \'etale
$(\varphi,\Gamma)$-modules over $\mathcal{O}_{\mathcal{E}}^\dagger$ and finite
free \'etale $(\varphi,\Gamma)$-modules over $\mathcal{O_E}$ given by the base
change $\mathcal{O}_{\mathcal{E}}\otimes_{\mathcal{O}_{\mathcal{E}}^\dagger}\cdot$. On
the other hand, $T_\ell$ acts by automorphisms on an object $D$ in
$\mathfrak{M}(\mathcal{O}_{\mathcal{E}},T_+)$ and also on an object
$D^\dagger$ in $\mathfrak{M}(\mathcal{O}_{\mathcal{E}}^\dagger,T_+)$. Since
automorphisms correspond to automorphism in an equivalence of categories, we
obtain

\begin{pro}
The functors
\begin{eqnarray*}
\mathcal{O}_{\mathcal{E}}\otimes_{\mathcal{O}_{\mathcal{E}}^\dagger}\cdot\colon
\mathfrak{M}(\mathcal{O}_{\mathcal{E}}^\dagger,T_+)&\rightarrow&
\mathfrak{M}(\mathcal{O}_{\mathcal{E}},T_+)\\
\cdot^{\dagger}\colon \mathfrak{M}(\mathcal{O}_{\mathcal{E}},T_+)&\rightarrow&\mathfrak{M}(\mathcal{O}_{\mathcal{E}}^\dagger,T_+)
\end{eqnarray*}
are quasi-inverse equivalences of categories.
\end{pro}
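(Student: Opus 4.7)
My plan is to reduce the statement to the Cherbonnier--Colmez equivalence plus the observation that, for $\ell = \ell_\alpha$, the extra piece of a $T_+$-structure beyond the $(\varphi,\Gamma)$-structure is a linear action of the group $T_\ell$ by automorphisms. First I would unpack the category $\mathfrak{M}(R,T_+)$ for $R \in \{\mathcal{O}_\mathcal{E},\mathcal{O}_\mathcal{E}^\dagger\}$: using the chosen cocharacter $\xi$, the monoid $T_+$ is generated by $T_{0,\ell}$, the Frobenius $s$ and the group $\Gamma = \xi(\mathbb{Z}_p^\times)$; by the Remark preceding the proposition the action of $T_{0,\ell}$ on an object $D$ of $\mathfrak{M}(R,T_+)$ is $R$-linear and extends uniquely to a linear action of $T_\ell$ by $R$-automorphisms. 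Since $T$ is commutative, this $T_\ell$-action commutes with both $\varphi$ and $\Gamma$. Hence an object of $\mathfrak{M}(R,T_+)$ is the same as a finite free étale $(\varphi,\Gamma)$-module over $R$ equipped with a commuting linear $T_\ell$-action by automorphisms, and morphisms in $\mathfrak{M}(R,T_+)$ are exactly morphisms of $(\varphi,\Gamma)$-modules that commute with $T_\ell$.

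Next, the forward functor $D^\dagger \mapsto \mathcal{O}_\mathcal{E}\otimes_{\mathcal{O}_\mathcal{E}^\dagger}D^\dagger$ carries a $T_+$-structure to a $T_+$-structure by base change, so it lands in $\mathfrak{M}(\mathcal{O}_\mathcal{E},T_+)$; its compatibility with the $\varphi$- and $\Gamma$-structures is classical, and its compatibility with $T_\ell$ is automatic from the linearity of the $T_\ell$-action. For the quasi-inverse, given $D$ in $\mathfrak{M}(\mathcal{O}_\mathcal{E},T_+)$, I take the canonical overconvergent descent $D^\dagger \subset D$ provided by Cherbonnier--Colmez \cite{CC} (see also \cite{Ke2}). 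It remains to promote the $T_\ell$-action on $D$ to one on $D^\dagger$.

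This is the only non-formal step and is the step I expect to be the main obstacle, though it is straightforward given the CC equivalence. For any $t \in T_\ell$ the map $\varphi_t \colon D \to D$ is an $\mathcal{O}_\mathcal{E}$-linear automorphism (by the extension described in the Remark) and commutes with $\varphi$ and with $\Gamma$ (because $T$ is commutative and $s,\Gamma \subset T$). Thus $\varphi_t$ is an automorphism of $D$ in the category of étale $(\varphi,\Gamma)$-modules over $\mathcal{O}_\mathcal{E}$. By the CC equivalence, $\mathcal{O}_\mathcal{E}\otimes_{\mathcal{O}_\mathcal{E}^\dagger}\cdot$ is fully faithful on finite free étale $(\varphi,\Gamma)$-modules, so $\varphi_t$ is induced by a unique automorphism $\varphi_t^\dagger$ of $D^\dagger$ in $\mathfrak{M}(\mathcal{O}_\mathcal{E}^\dagger,(\varphi,\Gamma))$. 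The assignment $t \mapsto \varphi_t^\dagger$ is then automatically a group homomorphism $T_\ell \to \Aut_{(\varphi,\Gamma)}(D^\dagger)$, again by faithfulness, and together with $\varphi$ and $\Gamma$ it assembles into the required $T_+$-structure on $D^\dagger$.

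Finally, to conclude that the two functors are quasi-inverse equivalences it suffices to observe that all natural isomorphisms $\mathcal{O}_\mathcal{E}\otimes_{\mathcal{O}_\mathcal{E}^\dagger}D^\dagger \xrightarrow{\sim} D$ and $(\,\mathcal{O}_\mathcal{E}\otimes_{\mathcal{O}_\mathcal{E}^\dagger}D^\dagger)^\dagger \xrightarrow{\sim} D^\dagger$ supplied by Cherbonnier--Colmez are morphisms in the respective categories of $T_+$-modules, which again follows from faithfulness of the base-change functor: both the given natural isomorphism and its composition with any $\varphi_t$ restrict on the overconvergent lattice to the same $(\varphi,\Gamma)$-equivariant map, hence coincide.
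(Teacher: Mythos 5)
Your proposal is correct and follows essentially the same route as the paper: the paper also reduces to the Cherbonnier--Colmez equivalence for \'etale $(\varphi,\Gamma)$-modules and notes that the extra $T_+$-structure amounts to a linear action of the group $T_\ell$ by automorphisms, which transports across any equivalence of categories. Your write-up merely makes explicit (via full faithfulness of the base-change functor) the step the paper summarizes as ``automorphisms correspond to automorphisms''.
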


Note that for the Robba ring $\mathcal{R}$ \'etaleness is stronger than what
we assumed for a module $D_{rig}^{\dagger}$ to belong to
$\mathfrak{M}(\mathcal{R},\varphi)$. The category
$\mathfrak{M}(\mathcal{R},\varphi)$ is just the category of $\varphi$-modules over
the Robba ring. Recall that an object $D^{\dagger}_{rig}$ in
$\mathfrak{M}(\mathcal{R},\varphi)$ is \emph{\'etale} (or unit-root, or pure of
slope zero) whenever it comes from an
overconvergent \'etale $\varphi$-module $D^{\dagger}$ over the ring of
``overconvergent'' power series $\mathcal{O}_{\mathcal{E}}^\dagger$ by base extension. We
denote by $\mathfrak{M}^0(\mathcal{R},\varphi)$ the category of \'etale
$\varphi$-modules over the Robba ring $\mathcal{R}$. We consequently define
the categories $\mathfrak{M}^0(\mathcal{R},T_{+})$,
$\mathfrak{M}^0(\mathcal{R}\bs N_1,\ell\js,\varphi)$, and
$\mathfrak{M}^0(\mathcal{R}\bs N_1,\ell\js,T_{+})$ as the full subcategory of \'etale
objects in the corresponding categories without superscript $0$. Note that via
the equivalence of categories \ref{T_+equiv}, \'etale objects correspond to
each other. Combining this observation with the main result of \cite{B} leads to

\begin{cor}\label{etaleRobba}
We have a commutative diagram of equivalences of categories
\begin{equation*}
\begin{matrix}
\mathfrak{M}^0(\mathcal{R},T_+)&\leftarrow&\mathfrak{M}(\mathcal{E}^\dagger,T_+)&\rightarrow&\mathfrak{M}(\mathcal{E},T_+)\\
\downarrow&&\downarrow&&\downarrow\\
\mathfrak{M}^0(\mathcal{R}\bs N_1,\ell\js,T_+)&\leftarrow&\mathfrak{M}(\mathcal{E}^\dagger\bs N_1,\ell\js,T_+)&\rightarrow&\mathfrak{M}(\mathcal{E}\bs
N_1,\ell\js,T_+).
\end{matrix}
\end{equation*}
\end{cor}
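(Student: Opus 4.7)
The plan is as follows. The three vertical arrows will be instances of the equivalence $\mathbb{D}$, $\mathbb{M}$ from Proposition \ref{T_+equiv}, applied to the three $(\varphi,\Gamma)$-rings $R = \mathcal{E}^\dagger$, $\mathcal{E}$, $\mathcal{R}$ respectively. For the $\mathcal{R}$-column one restricts to the étale subcategories on both sides, declaring an object $M \in \mathfrak{M}(\mathcal{R}\bs N_1,\ell\js,T_+)$ to be étale precisely when $\mathbb{D}(M) \in \mathfrak{M}(\mathcal{R},T_+)$ is étale in the sense of section \ref{microlocal}. The top-right horizontal arrow is the Cherbonnier--Colmez overconvergence equivalence \cite{CC}; the top-left horizontal arrow is the base-change $\mathcal{R} \otimes_{\mathcal{E}^\dagger,\iota}\cdot$, which is an equivalence onto $\mathfrak{M}^0(\mathcal{R},T_+)$ by the main result of \cite{B}. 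That both extend from the $(\varphi,\Gamma)$-module setting to the $T_+$-module setting is the observation already used in the remark preceding the statement, namely that equivalences preserve automorphisms and hence the extra linear $T_\ell$-action.

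For the bottom row I would \emph{define} the horizontal functors by base change along the natural ring homomorphisms
\begin{equation*}
\mathcal{E}^\dagger\bs N_1,\ell\js \hookrightarrow \mathcal{E}\bs N_1,\ell\js, \qquad \mathcal{E}^\dagger\bs N_1,\ell\js \hookrightarrow \mathcal{R}\bs N_1,\ell\js,
\end{equation*}
each equipped with its natural $T_+$-action. The commutativity of each square is then essentially tautological from associativity of tensor products: for $R \hookrightarrow S$ either of the two inclusions $\mathcal{E}^\dagger \hookrightarrow \mathcal{E}$ or $\mathcal{E}^\dagger \hookrightarrow \mathcal{R}$, both compositions in the corresponding square send $D \in \mathfrak{M}(R,T_+)$ functorially to $S\bs N_1,\ell\js \otimes_{R,\iota_R} D$, using that $\iota_R$ and $\iota_S$ are compatible with the inclusion $R\bs N_1,\ell\js \hookrightarrow S\bs N_1,\ell\js$ (this compatibility is immediate from the construction of $\iota = \varprojlim \iota_k$ in section \ref{phiring}).

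Once commutativity of both squares is in hand, the two-out-of-three principle for equivalences yields immediately that the bottom horizontal arrows are equivalences, since in each square the other three arrows are already equivalences by Proposition \ref{T_+equiv} and by the top row.

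The main obstacle I foresee is the bookkeeping around étaleness on the Robba side: the transported notion of étaleness for objects of $\mathfrak{M}(\mathcal{R}\bs N_1,\ell\js,T_+)$ must be shown to coincide with "arising by base change from $\mathfrak{M}(\mathcal{E}^\dagger\bs N_1,\ell\js,T_+)$", which is what makes the left square and the left vertical arrow sensible. This is precisely what the commutativity of the left square delivers: since $\mathbb{D}$ of a base change is the base change of $\mathbb{D}$, an object is étale in the ad hoc sense (descends to $\mathcal{E}^\dagger\bs N_1,\ell\js$) if and only if its $\mathbb{D}$-image descends to $\mathcal{E}^\dagger$, i.e.\ is étale in the classical sense over $\mathcal{R}$. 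Thus the two candidate definitions agree and the restriction of $\mathbb{D},\mathbb{M}$ to the étale subcategories is a well-defined quasi-inverse pair of equivalences.
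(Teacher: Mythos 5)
Your proposal is correct and follows essentially the same route as the paper: the vertical arrows come from Proposition \ref{T_+equiv} (with \'etale objects corresponding on the Robba side), the top row from Cherbonnier--Colmez \cite{CC} and Berger \cite{B} extended to $T_+$-modules via the observation that $T_\ell$ acts by automorphisms, and the bottom arrows are then equivalences by transport along the vertical equivalences. The paper's own proof is just a one-line version of this; your extra bookkeeping (defining the bottom functors by base change, checking commutativity via compatibility of the $\iota$ maps, and reconciling the two descriptions of \'etaleness over $\mathcal{R}\bs N_1,\ell\js$) merely makes explicit what the paper leaves implicit.
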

\begin{proof}
The left horizontal arrows are also equivalences of categories by \cite{B}
noting that $T_\ell$ acts via automorphisms on both types of objects in the
upper row.
\end{proof}

\begin{rem} The category $\mathfrak{M}^0(\mathcal{R}\bs N_1,\ell\js,T_+)$ of \'etale
  $T_+$-modules is embedded into the bigger category
  $\mathfrak{M}(\mathcal{R}\bs N_1,\ell\js,T_+)$. So we may 
speak of \emph{trianguline} objects in $\mathfrak{M}^0(\mathcal{R}\bs
N_1,\ell\js,T_+)$ as in the classical case (see for instance \cite{B1}). Indeed, we call an object
$M_{rig}^{\dagger}$ in $\mathfrak{M}^0(\mathcal{R}\bs N_1,\ell\js,T_+)$ trianguline
if it becomes a successive extension of objects in
$\mathfrak{M}(\mathcal{R}\bs N_1,\ell\js,T_+)$ of rank $1$ after a
finite base extension $L\otimes_K\cdot$. It is clear that trianguline objects
correspond to trianguline objects via the first vertical arrow in Corollary \ref{etaleRobba}.
\end{rem}
\begin{rem}
It would be interesting to construct a noncommutative version of the ``big'' rings $\tilde{\bf{A}}_{\mathbb{Q}_p}$ and $\tilde{\bf{A}}_{\mathbb{Q}_p}^\dagger$ in \cite{Ke2} and generalize (the proofs of) Theorems 2.3.5, 2.4.5, and 2.6.2 to this noncommutative setting. For this, one would need a generalization for results in the present paper to base fields other than $\mathbb{Q}_p$.
\end{rem}
\begin{rem}
Since we have the natural inclusions $\mathcal{O}_{\mathcal{E}}^\dagger\hookrightarrow
\mathcal{R}^{int}(N_1,\ell)\hookrightarrow
\mathcal{O}_{\mathcal{E}}^\dagger\bs N_1,\ell\js$, we have a fully faithful functor
\begin{equation*}
\Theta:=\left(\mathcal{R}^{int}(N_1,\ell)\otimes_{\mathcal{O}_{\mathcal{E}}^\dagger}\cdot\right)\circ
\left(\mathcal{O}_{\mathcal{E}}^\dagger\otimes_{\ell,\mathcal{O}_{\mathcal{E}}^\dagger[[
N_1,\ell]]}\cdot\right)\circ
\left(\mathcal{O}_{\mathcal{E}}^\dagger\bs N_1,\ell\js\otimes_{\mathcal{R}^{int}(N_1,\ell)}\cdot\right)
\end{equation*}
from the category $\mathfrak{M}(\mathcal{R}^{int}(N_1,\ell),T_+)$
to itself. Whether or not it is essentially surjective (or equivalently that
it is naturally isomorphic to the identity functor) is not clear. However, we
have $\Theta\cong\Theta\circ\Theta$ naturally.
\end{rem}
\begin{proof}
The faithfulness is clear since the objects in the category
$\mathfrak{M}(\mathcal{R}^{int}(N_1,\ell),T_+)$ are free modules, the maps $\mathcal{O}_{\mathcal{E}}^\dagger\hookrightarrow
\mathcal{R}^{int}(N_1,\ell)\hookrightarrow
\mathcal{O}_{\mathcal{E}}^\dagger\bs N_1,\ell\js$ are injective,
and the functor $\mathcal{O}_{\mathcal{E}}^\dagger\otimes_{\ell,\mathcal{O}_{\mathcal{E}}^\dagger[[
N_1,\ell]]}\cdot$ in the middle is an equivalence of categories by
Prop.\ \ref{T_+equiv}. The assertion $\Theta\cong\Theta\circ\Theta$ is also
clear by Prop.\ \ref{T_+equiv}. For the fullyness let
$f\colon\Theta(\mathcal{M}_1)\to\Theta(\mathcal{M}_2)$ be a morphism in
$\mathfrak{M}(\mathcal{R}^{int}(N_1,\ell))$. Then we have
$\Theta(f-\Theta(f))=0$ and by the faithfulness of $\Theta$ obtain $f=\Theta(f)$.
\end{proof}

\end{document}